\def\th@plain{\slshape}\makeatother
\makeatletter\patchcmd{\th@remark}{\itshape}{\slshape}{}{}\makeatother
\newcounter{bidon}
\newcommand{\rdb}{\refstepcounter{bidon}}
\newcommand \sibrouillon[1]{}
\begin{document}
%!TEX root = EE-CoherenceVX.tex  

\newcommand \eti{^\times }

\newcommand \vu {\vee} % sup dans les treillis
\newcommand \vi {\wedge} % inf dans les treillis
\newcommand \Vu {\bigvee\nolimits}
\newcommand \Vi {\bigwedge\nolimits}

\newcommand \RR{\mathbb{R}}
\newcommand \QQ{\mathbb{Q}}
\newcommand \NN{\mathbb{N}}

\newcommand \alb  {\allowbreak}
\newcommand \uX  {\underline{X}}
\newcommand \fm  {\mathfrak{m}}

\renewcommand \O {\mathrm{O}}
\newcommand \e {\mathrm{e}}
\newcommand \f {\mathrm{f}}
\renewcommand \mod {\,\mathrm{mod}\,}

\newcommand \piv {\mathrm{piv}}
\newcommand \cq {\mathrm{cpiv}}
\newcommand \Frac {\mathrm{Frac}}
\newcommand \idx {\mathrm{index}}
\renewcommand \Im {\mathrm{Im}}
\newcommand \PrimCoeff {\mathrm{PrimCoeff}}
\newcommand \PrimExp {\mathrm{PrimExp}}
\newcommand \PrimMon {\mathrm{PrimMon}}
\newcommand \Rad {\mathrm{Rad}}
\newcommand \Res {\mathrm{Res}}
\newcommand \Sat {\mathrm{Sat}}
\newcommand \Tr {\mathrm{Tr}}

\newcommand \som {\sum\nolimits}

\newcommand \gen[1]{\langle{#1}\rangle}
\newcommand \tra[1]{{\,^\mathrm{t}#1}}
\newcommand \eqg[1]{\buildrel{#1}\over \simeq}
\newcommand\so[1]{\left\{{#1}\right\}}
\newcommand\sotq[2]{\so{\,#1\mid#2\,}}

\newcommand \RedPrim{\mathsf{RedPrim}}

\newcommand \V{\mathbf{V}}
\newcommand \A{\mathbf{A}}
\newcommand \B{\mathbf{B}}
\newcommand \C{\mathbf{C}}
\renewcommand \k{\mathbf{k}}
\newcommand \K{\mathbf{K}}
\newcommand \R{\mathbf{R}}
\newcommand \T{\mathbf{T}}
\newcommand \gA{\A}
\newcommand \gB{\B}
\newcommand \gC{\C}
\newcommand \gk{\k}
\newcommand \gK{\K}
\newcommand \gR{\R}
\newcommand \gT{\T}
\newcommand \gV{\V}
\newcommand \kX{\k[X]}
\newcommand \KX{\K[X]}
\newcommand \RX{\R[X]}
\newcommand \VX{\V[X]}

\newcommand \cI{\mathcal{I}}
\newcommand \cM{\mathcal{M}}
\newcommand \cN{\mathcal{N}}
\newcommand \cS{\mathcal{S}}

\newcommand \noi {\noindent}
\renewcommand \ss {\smallskip}
\newcommand \sni {\smallskip\noindent}
\newcommand \ms {\medskip}
\newcommand \mni {\medskip\noindent}
\newcommand \bs {\bigskip}
\newcommand \bni {\bigskip\noindent}

\newcommand \ov[1] {\overline{#1}}

\newcommand \intervalle[1] {[ \negthinspace [#1] \negthinspace ]}

\newcommand\hsz{\\ }
\newcommand\hsu{\\ \hspace*{4mm}}
\newcommand\hsd{\\ \hspace*{8mm}}
\newcommand\hst{\\ \hspace*{1,2cm}}
\newcommand\hsq{\\ \hspace*{1,6cm}}
\newcommand\hsc{\\ \hspace*{2cm}}
\newcommand\hsix{\\ \hspace*{2,4cm}}
\newcommand\hsept{\\ \hspace*{2,8cm}}

\thispagestyle{empty}
~ 
%\vspace{3cm}

\noindent The French version of this paper appeared in\\  \emph{Communications in Algebra}, {\bf 42}: 3768–3781, 2014 
\\
DOI: 10.1080/00927872.2013.794360

\bigskip \noindent In this file you find the English version starting on the page  numbered E1.

\medskip \noindent  {\large \bf An algorithm for computing syzygies on $\V[X]$ when $\V$ is a valuation domain}

\bigskip \noindent  
Then the French version begins on the page numbered F1.

\medskip\noindent   {\large \bf Un algorithme pour le calcul des syzygies sur  $\V[X]$ dans le cas où $\V$ est un domaine de valuation} 

\smallskip \noindent Le lecteur ou la lectrice sera sans doute surprise de l'alternance des sexes ainsi que de l'orthographe du mot 'corolaire', avec d'autres innovations auxquelles elle n'est pas habituée. En fait, nous avons essayé de suivre au plus près les préconisations de l'orthographe nouvelle recommandée, telle qu'elle est enseignée aujourd'hui dans les écoles en France.

\bigskip\noindent   {\large \bf Authors}

\smallskip 
\noindent Henri Lombardi

\noindent Laboratoire de Mathématiques de Besançon, CNRS UMR 6623, 
Université Bourgogne Franche-Comté, 25030 BESANCON cedex, FRANCE,\\
email: henri.lombardi@univ-fcomte.fr, \\
url: \url{http://hlombardi.free.fr}

\smallskip \noindent 
Claude Quitté

\noindent Laboratoire de Mathématiques,
SP2MI, Boulevard 3, Teleport 2, BP 179,
86960 FUTUROSCOPE Cedex, FRANCE,\\
email: quitte@math.univ-poitiers.fr

\smallskip \noindent Ihsen Yengui

\noindent 
Département de Mathématiques,  Faculté des Sciences de Sfax, 3000 Sfax,
Tunisia,\\ email: ihsen.yengui@fss.rnu.tn

\bigskip The results in this paper have been improved in \citealt*{DMY2016} 
(giving precise bounds) and generalised
for the rings $\bold V[X_1,\dots,X_k]$ in \citealt*{DVY2015}.
%:     abstract
\begin{abstract} 
We give an algorithm for computing the $\bold V$-saturation of any finitely-generated submodule of $\bold V[X_1,\dots,X_k]^m$ $(k\in \NN, m \in \NN^*)$, where $\bold V$ is a valuation domain. Our algorithm is based on a notion of “echelon form” which ensures its correctness. The proposed algorithm terminates when two (Hilbert) series on the quotient field and the residue field of $\bold V$ coincide. As application, our algorithm computes syzygies over $\bold V[X_1,\dots,X_k]$.

\end{abstract}

{\small 
\bibliographystyle{plainnat}
%%\bibliography{mainVXbib}
%\begin{thebibliography}{}
%
%\bibitem[Ducos et~al., 2015]{DVY2015}
%Ducos, L., Valibouze, A., and Yengui, I. (2015).
%\newblock Computing syzygies over {$\bold V[X_1,\dots,X_k]$}, {$\bold V$} a
%  valuation domain.
%\newblock {\em J. Algebra}, 425:133--145.
%
%\bibitem[Ducos et~al., 2016]{DMY2016}
%Ducos, L., Monceur, S., and Yengui, I. (2016).
%\newblock Computing the {$\bold{V}$}-saturation of finitely-generated
%  submodules of {$\bold{V}[X]^m$} where {$\bold{V}$} is a valuation domain.
%\newblock {\em J. Symbolic Comput.}, 72:196--205.
%
%\end{thebibliography}
%}

}

\normalsize

\clearpage
\newpage
\thispagestyle{empty}
~

\pagestyle{headings}
\patchcmd{\sectionmark}{\MakeUppercase}{}{}{}
\setcounter{page}{0}\renewcommand\thepage{E\arabic{page}}
%!TEX encoding =  UTF-8 Unicode
%!TEX root =  mainVX.tex

\begingroup

%!TEX root =  E-CoherenceVX.tex
\newcommand\tsf[1]{\textbf{\textsf{#1}}}
\newcommand\Vrai{\mathsf{True}}
\newcommand\Faux{\mathsf{False}}
\newcommand\ET{\mathsf{ and }}
\newcommand\OU{\mathsf{ or }}

\newcommand\sialors [1]{\textbf{if } $#1$ \textbf{ then }}

\newcommand\pour[3]{\textbf{for } $#1$ \textbf{ from } $#2$
      \textbf{ to} $#3$ \textbf{do  }}
\newcommand\tantque[1]{\textbf{while} $#1$ \textbf{ do }}
\newcommand\finpour{\textbf{end for}}
\newcommand\fintantque{\textbf{end while }}
\newcommand\Debut{\\[1mm] \textbf{Start }}
\newcommand\fin{\textbf{\\ End.}}
\newcommand\Entree{\\[1mm] \textbf{Input: }}
\newcommand\Sortie{\\ \textbf{Output : }}
\newcommand\Varloc{\\ \textbf{Local variables : }}

\newcommand \srl{ linearly recurrent sequence\xspace}

%:  Commentaires, remarques, exemples, problemes
\newcommand\comm{\rdb
\noi{\it Comment. }} 

\newcommand\COM[1]{\rdb
\noi{\it Comment #1. }}

\newcommand\comms{\rdb
\noi{\it Comments. }}

\newcommand\Pb{\rdb
\noi{\bf Problem. }}

\newcommand \rem{\rdb
\noi{\sl Remark. }}

\newcommand \REM[1]{\rdb
\noi{\sl Remark#1. }}

\newcommand \rems{\rdb
\noi{\sl Remarks. }}

\newcommand \exl{\rdb
\noi{\bf Example. }}

\newcommand \EXL[1]{\rdb
\noi{\bf Example: #1. }}

\newcommand \exls{\rdb
\noi{\bf Examples. }}

\newcommand\gui[1]{``{#1}''}

\newcommand \thref[1] {Theorem~\ref{#1}}
\newcommand \paref[1] {page~\pageref{#1}}
\newcommand \pstfref[1] {Positivstellensatz formel~\ref{#1}}
\newcommand \pstref[1] {Positivstellensatz~\ref{#1}}

\newcommand\subsubsec[1] {\subsubsection*{#1}}

% ----  ssi  etc  
\newcommand \hdr {induction hypothesis\xspace}
\newcommand \ssi {if and only if\xspace}
\newcommand \cnes {necessary and sufficient condition\xspace}
\newcommand \spdg {without loss of generality\xspace}
\newcommand \Propeq {T.F.A.E.\xspace}
\newcommand \propeq {t.f.a.e.\xspace}
\newcommand \disept {17$^{th}$ Hilbert's problem\xspace}

\def \cad {\textit{i.e.}\ }
%\newcommand \wlog {w.l.o.g.\/}

%------- abr\'eviations math\'ematiques courantes ---

\newcommand \Amo {$\gA$-module\xspace}
\newcommand \Amos {$\gA$-modules\xspace}

\newcommand \Bmo {$\gB$-module\xspace}
\newcommand \Bmos {$\gB$-modules\xspace}

\newcommand \Cmo {$\gC$-module\xspace}
\newcommand \Cmos {$\gC$-modules\xspace}

\newcommand \kmo {$\gk$-module\xspace}
\newcommand \kmos {$\gk$-modules\xspace}

\newcommand \Kmo {$\gK$-module\xspace}
\newcommand \Kmos {$\gK$-modules\xspace}

\newcommand \Lmo {$\gL$-module\xspace}
\newcommand \Lmos {$\gL$-modules\xspace}

\newcommand \Rmo {$\gR$-module\xspace}
\newcommand \Rmos {$\gR$-modules\xspace}

\newcommand \Vmo  {$\gV$-module\xspace}
\newcommand \Vmos {$\gV$-modules\xspace}

\newcommand \VXmo  {$\VX$-module\xspace}

\newcommand \Zmo {$\gZ$-module\xspace}
\newcommand \Zmos {$\gZ$-modules\xspace}

\newcommand \ZZmo {$\ZZ$-module\xspace}
\newcommand \ZZmos {$\ZZ$-modules\xspace}

\newcommand \Ali {$\gA$-\ali}
\newcommand \Alis {$\gA$-\alis}

\newcommand \Alg {$\gA$-\alg}
\newcommand \Algs {$\gA$-\algs}

\newcommand \kev {$\gk$-vector space\xspace}
\newcommand \kevs {$\gk$-vector spaces\xspace}

\newcommand \Kev {$\gK$-vector space\xspace}
\newcommand \Kevs {$\gK$-vector spaces\xspace}

\newcommand \klg {$\gk$-\alg}
\newcommand \klgs {$\gk$-\algs}

\newcommand \Klg {$\gK$-\alg}
\newcommand \Klgs {$\gK$-\algs}

%: a

\newcommand \agq {algebraic\xspace}

\newcommand \alg {algebra\xspace}
\newcommand \algs {algebras\xspace}

\newcommand \agB {Boolean \alg}

\newcommand \algo{algorithm\xspace}
\newcommand \algos{algorithms\xspace}

\newcommand \algq{algorithmic\xspace}

\newcommand \ali {\lin map\xspace}
\newcommand \alis {\lin maps\xspace}

\newcommand \anar {\ari \ri}
\newcommand \anars {\ari \ris}
\newcommand \Anars {\Ari \ris}

\newcommand \ari{arith\-metic\xspace}

\newcommand \auto {automorphism\xspace}
\newcommand \autos {automorphisms\xspace}

%: c

\newcommand \cac {algebraically closed field\xspace}
\newcommand \cacs {algebraically closed fields\xspace}

\newcommand \carn{characterization\xspace}  
\newcommand \carns{characterizations\xspace}  

\newcommand \cdi{discrete field\xspace}  
\newcommand \cdis{discrete fields\xspace}  

\newcommand \coe {coefficient\xspace}
\newcommand \coes {coefficients\xspace}

\newcommand \cohc {coherence\xspace}

\newcommand \coh {coherent\xspace}
\newcommand \cohs {\coh}

\newcommand \coli {linear combination\xspace}
\newcommand \colis {linear combinations\xspace}

\newcommand \com {comaximal\xspace}

\newcommand \coo {coordinate\xspace}
\newcommand \coos {coordinates\xspace}

%: d
\newcommand \dcd {résidually discrete\xspace}
\newcommand \dcds {\dcd}

\newcommand \ddp {Pr\"ufer domain\xspace}
\newcommand \ddps {Pr\"ufer domains\xspace}

\newcommand \ddv {valuation domain\xspace}
\newcommand \ddvs {valuation domains\xspace}

\newcommand \Demo{Proof\xspace}     

\newcommand \demo{proof\xspace}     
\newcommand \demos{proofs\xspace}     

\newcommand \dem{\demo}
\newcommand \dems{\demos}

\newcommand \ddk {Krull dimension\xspace}

\newcommand \dfn{definition\xspace}  
\newcommand \dfns{definitions\xspace}  

\newcommand \discri{discriminant\xspace}
\newcommand \discris{discriminants\xspace}

\newcommand \dok {Dedekind domain\xspace}
\newcommand \doks {Dedekind domains\xspace}

\newcommand \dve {divisibility\xspace}

\newcommand \dvz {zero divisor\xspace}
\newcommand \dvzs {zero divisors\xspace}

%: e
\newcommand \eco{\com \elts}  

\newcommand \egmt {also\xspace}

\newcommand \egt{equality\xspace} 
\newcommand \egts{equalities\xspace} 

\newcommand \elr{elementary\xspace}  
\newcommand \elrs{\elr}  

\newcommand \elt{element\xspace}  
\newcommand \elts{elements\xspace}  

\def \endo {endomorphism\xspace}
\def \endos {endomorphisms\xspace}

\newcommand \entrel {entailment relation\xspace}
\newcommand \entrels {entailment relations\xspace}

\newcommand \eqv  {equivalent\xspace}

\newcommand \evc{vector space\xspace} 
\newcommand \evcs{vector spaces\xspace} 

%: f

\newcommand \fab {bounded \fcn}
\newcommand \fabs {bounded \fcns}

\newcommand \fac {total \fcn}
\newcommand \facz {total \fcnz}

\newcommand \fap {partial \fcn}
\newcommand \faps {partial \fcns}

\newcommand \fcn {factorisation\xspace}
\newcommand \fcns {factorisations\xspace}

\newcommand \fdi{strongly discrete\xspace} 

%: g

\newcommand\gmq{geometric\xspace}

\newcommand\gne{generalized\xspace}

\newcommand\gnl{general\xspace}

\newcommand\gnlt{generally\xspace}

\newcommand\gnn{generalisation\xspace}
\newcommand\gnns{generalisations\xspace}

\newcommand\gnq{generic\xspace}

\newcommand\grl{$\ell$-group\xspace}
\newcommand\grls{$\ell$-groups\xspace}

\newcommand \gtr{generator\xspace}  
\newcommand \gtrs{generators\xspace}  

%: h i

\newcommand \homo {homomorphism\xspace}
\newcommand \homos {homomorphisms\xspace}

\newcommand \id {ideal\xspace}
\newcommand \ids {ideals\xspace}

\newcommand \idd {de\-ter\-mi\-nantal \id}
\newcommand \idds {de\-ter\-mi\-nantal \ids}
\newcommand \iddz {de\-ter\-mi\-nantal \idz}
\newcommand \iddsz {de\-ter\-mi\-nantal \idsz}

\newcommand \idema {maximal \id}
\newcommand \idemas {maximal \ids}

\newcommand \idep {prime \id}
\newcommand \ideps {prime \ids}

\newcommand \idemi {minimal prime\xspace}
\newcommand \idemis {minimal primes\xspace}

\newcommand \idf {Fitting \id}
\newcommand \idfs {Fitting \ids}

\newcommand \idm {idempotent\xspace}
\newcommand \idms {idempotents\xspace}

\newcommand \idtr {indeterminate\xspace}
\newcommand \idtrs {indeterminates\xspace}

\newcommand \ifr {fractional \id}
\newcommand \ifrs {fractional \ids}

\newcommand \itf {\tf \id}
\newcommand \itfs {\tf \ids}

\newcommand \iso {isomorphism\xspace}
\newcommand \isos {isomorphisms\xspace}

\newcommand \iv {invertible\xspace}

%: l
\newcommand \lec {reader\xspace}

\newcommand \lgb {local global\xspace}

\newcommand \lin {linear\xspace}

\newcommand \lon {localisation\xspace}
\newcommand \lons {localisations\xspace}

\newcommand \lop {\lot principal\xspace}

\newcommand \losd {\lot \sdz\xspace}

\def \lot {locally\xspace}

%: m
\newcommand \mlp {principal \lon matrix\xspace}
\newcommand \mlps {principal \lon matrices\xspace}

\newcommand \mnp {manipulation\xspace}
\newcommand \mnps {manipulations\xspace}
\newcommand \mnr {\elr \mnp}
\newcommand \mnrs {\elr \mnps}

\newcommand \mo {monoid\xspace}
\newcommand \mos {monoids\xspace}
\newcommand \moco {\com \mos}

\newcommand \mpf {\pf module\xspace}
\newcommand \mpfs {\pf modules\xspace}

\newcommand \mpn {\pn matrix\xspace}
\newcommand \mpns {\pn matrices\xspace}

\newcommand \mpr {\pro module\xspace}
\newcommand \mprs {\pro modules\xspace}

\newcommand \mprn {\prn matrix\xspace}
\newcommand \mprns {\prn matrices\xspace}

\newcommand \mptf {\ptf module\xspace}
\newcommand \mptfs {\ptf modules\xspace}

\newcommand \mrc {projective module of constant rank\xspace}
\newcommand \mrcs {projective modules of constant rank\xspace}

\newcommand \mtf {\tf module\xspace}
\newcommand \mtfs {\tf modules\xspace}

%: n

\newcommand \ncr{necessary\xspace}

\newcommand \ncrt{necessarily\xspace}

\newcommand \ndz {regular\xspace}

\newcommand \noe {Noetherian\xspace}
\newcommand \noes {\noe}
\newcommand \noee {\noe}
\newcommand \noees {\noe}

\newcommand \noet {Noetherianity\xspace}

\newcommand \noco {\noe\coh}

\newcommand \nst {Nullstellensatz\xspace}
\newcommand \nsts {Nullstellensätze\xspace}

\newcommand \odz {Zariski open set\xspace}

\newcommand \oqc {\qc open set\xspace}
\newcommand \oqcs {\qc open sets\xspace}

%: p

\newcommand \pa {saturated pair\xspace}
\newcommand \pas {saturated pairs\xspace}

\newcommand \pb{problem\xspace}  
\newcommand \pbs{problems\xspace}

\newcommand \peq {purely equational\xspace}

\newcommand \pf {finitely presented\xspace}

\newcommand \plg {\lgb principle\xspace}
\newcommand \plgs {\lgb principles\xspace}

\newcommand \pn {presentation\xspace}
\newcommand \pns {presentations\xspace}

\newcommand \pol {polynomial\xspace}
\newcommand \pols {polynomials\xspace}

\newcommand \polcar {characteristic \pol}

\newcommand \prc {rank constant \pro}

\newcommand \prmt {précisely\xspace}

\newcommand \prn {projection\xspace}
\newcommand \prns {projections\xspace}

\newcommand \pro {projective\xspace}

\newcommand \proi {potential prime\xspace}
\newcommand \prois {potential primes\xspace}

\newcommand \proc {potential chain\xspace}
\newcommand \procs {potential chains\xspace}

\newcommand \proel {elementary \proc}
\newcommand \proels {elementary \procs}
\newcommand \proelo {\proel of length }
\newcommand \proelos {\proels of length }

\newcommand \prolo {\proc of length }
\newcommand \prolos {\procs of length }

\newcommand \prt {property\xspace}
\newcommand \prts {properties\xspace}

\newcommand \pst {Positivstellensatz\xspace}
\newcommand \psts {Positivstellens\"atze\xspace}

\newcommand \ptf {\tf \pro}

%: q

\newcommand \qc {quasi-compact\xspace}

\newcommand \qi {quasi integral\xspace}

\newcommand \rcf {real closed field\xspace}
\newcommand \rcfs {real closed fields\xspace}

\newcommand \rdl {linear dependance relation\xspace}
\newcommand \rdls {linear dependance relations\xspace}

\newcommand \rdi {integral dependance relation\xspace}
\newcommand \rdis {integral dependance relations\xspace}

\newcommand \rdt {residually\xspace}

\newcommand \recu {induction\xspace}
\newcommand \recus {inductions\xspace}

\newcommand \ri {ring\xspace}
\newcommand \ris {rings\xspace}

%: s

\newcommand \sad {dynamical algebraic structure\xspace}
\newcommand \sads {dynamical algebraic structures\xspace}

\newcommand \sdz {without \dvz}
\newcommand \sdzz {without \dvzz}

\newcommand \sgr {\gtr set\xspace}%{generator \sys}
\newcommand \sgrs {\gtr sets\xspace}%{generator \syss}

\newcommand \sli {\lin \sys}
\newcommand \slis {\lin \syss}

\newcommand \sys {system\xspace}
\newcommand \syss {systems\xspace}

%: t
\newcommand \talg {Horn theory\xspace}
\newcommand \talgs {Horn theories\xspace}

\newcommand \tco {coherent theory\xspace}
\newcommand \tcos {coherent theories\xspace}

\newcommand \tdy {dynamical theory\xspace}
\newcommand \tdys {dynamical theories\xspace}

\newcommand \tel {regular theory\xspace}
\newcommand \tels {regular theories\xspace}

\newcommand \telri {cartesian theory\xspace}
\newcommand \telris {cartesian theories\xspace}

\newcommand \tf {finitely generated\xspace}

\newcommand \tfo {formal theory\xspace}
\newcommand \tfos {theory formelles\xspace}

\newcommand \tgm {\gmq theory\xspace}
\newcommand \tgms {\gmq theories\xspace}

\newcommand \Tho {Theorem\xspace}
\newcommand \tho {theorem\xspace}
\newcommand \thos {theorems\xspace}

\newcommand \tpe {purely equational theory\xspace}

\newcommand \trdi {distributive lattice\xspace}
\newcommand \trdis {distributive lattices\xspace}

\newcommand \vfn {verification\xspace}
\newcommand \vfns {verifications\xspace}

\newcommand \zed {zero-dimensional\xspace}

\newcommand \zedr {reduced \zed}
\newcommand \zedrs {\zedr}

%:  ------- maths constructives

\newcommand \cov {constructive\xspace}

\newcommand \coma {\cov \maths}
\newcommand \clama {classical \maths}

\renewcommand \cot {constructively\xspace}

\newcommand \mathe {mathematical\xspace}
\newcommand \maths {mathematics\xspace}

\newcommand \matn {mathematician\xspace}

\newcommand \pte {excluded middle principle\xspace}

\newcommand \prco {\cov proof\xspace}
\newcommand \prcos {constructive proofs\xspace}

\newcommand \tcg {compactness theorem\xspace}
\newcommand \Tcgi {The \tcg implies the following result. }
%

%!TEX encoding =  UTF-8 Unicode

\theoremstyle{plain}
\newtheorem{theorem}{Theorem}[section]
\newtheorem{thdef}[theorem]{Theorem and definition}
\newtheorem{lemma}[theorem]{Lemma}
\newtheorem{corollary}[theorem]{Corollary}
\newtheorem{proposition}[theorem]{Proposition}
\newtheorem{propdef}[theorem]{Proposition and definition}
\newtheorem{plcc}[theorem]{Concrete local-global principle}
\newtheorem{fact}[theorem]{Fact}
\newtheorem{algorithm}{Algorithm}

\theoremstyle{definition}
\newtheorem{conjecture}[theorem]{Conjecture}
\newtheorem{definition}[theorem]{Definition}
\newtheorem{definitions}[theorem]{Definitions}
\newtheorem{notation}[theorem]{Notation}
\newtheorem{definota}[theorem]{Definition and notation} 
\newtheorem{convention}[theorem]{Convention}
\newtheorem{context}[theorem]{Context}
\newtheorem{problem}[theorem]{Problem}
\newtheorem{question}[theorem]{Question}

\theoremstyle{remark}
\newtheorem{remark}[theorem]{Remark}
\newtheorem{remarks}[theorem]{Remarks}
\newtheorem{commente}[theorem]{Comment}
\newtheorem{comments}[theorem]{Comments}
\newtheorem{example}[theorem]{Example}
\newtheorem{examples}[theorem]{Examples}

\def\proofname{\textsl{Proof}}

%: titre
\title{An algorithm for computing syzygies on $\mathbf{V}[X]$\\
when $\mathbf{V}$ is a valuation domain}
\author{
Henri Lombardi,
Claude Quitté \& Ihsen Yengui
}

\date{2014, translation 2023}

\def\thefootnote{\arabic{footnote}}

\startcontents[english]

\newcommand\hum[1]{}

\maketitle

\selectlanguage{english}
\begin{abstract}
\smallskip
We give an algorithm for computing the $\V$-saturation of any finitely
generated submodule of $\V[X]^n$ ($n \in \mathbb{N}^*$),
where $\V$ is a valuation domain. This allows us to compute a finite
system of generators for the syzygy module of any finitely generated
submodule of $\V[X]^k$.
\end{abstract}

\sni {\small\textbf{Key words:}  Saturation, Coherence, Syzygies, Valuation domains, Computer algebra, Constructive Algebra.}

\setcounter{tocdepth}{4}
\markboth{Contents}{Contents}
\small

\printcontents[english]{}{1}{}
\normalsize

%: myheadings
\pagestyle{myheadings} \markboth{An algorithm for computing syzygies on $\mathbf{V}[X]$}{H. Lombardi, C. Quitté and I. Yengui}

%%%%%%%%%%%%%%%%%%%%%%%%%%%%%%%%%%%%%%%%%%%%%%%%%%%%

\section*{Introduction} \label{sec Introduction}
\addcontentsline{toc}{section}{Introduction}

It is folklore (see e.g. \cite[Glaz, Th. 7.3.3]{Glaz}) that for a valuation domain~$\V$, the ring $\V[X]$ is coherent, (i.e., the syzygy module of a \itf of $\VX$ is \tf).
The proof given in \cite{Glaz} uses a difficult result in a deep paper  \cite{GR}.
There is nevertheless no known  general  algorithm  for this remarkable result.

For a \noe \coh ring $\R$ (not \ncrt a \ddv),
it is known that the \pol \ri $\R[X_1,\dots,X_n]$ is also \noe \coh.
A \prco is given in \cite{ric74}
and carefully explained in the book \citealt*{MRR}.
In this case it is also possible to use Gröbner bases which were introduced by  Buchberger for \pol rings over fields
(see e.g.\ \cite{Lou,HY,Y}).

Nevertheless,  \noet is not really used in the case of fields since the coherence result
is also easily proved for \zedr rings
(also called Von Neumann regular rings, or absolutely flat rings).

In \citealt*{LSY}, an \algo is given for the computation of a Gröbner basis for a \itf of $\VX$ if the \ddv is of dimension~1
(not \ncrt \noe).
From this  follows an \algo for the \cohc of a \itf of~$\VX$.

Let us recall (see e.g.\ \citealt*{MRR}) that if a ring is \coh then any \mpf $M$ is a \coh module
(i.e., the syzygy module of any \tf sub-module of $M$ is \tf).

In this paper, we give an \algo with no \noe hypothesis, and no hypothesis about the Krull dimension,
for the computation of a finite generating set for the syzygy module of a finite family of vectors in $\VX^k$.
We think that we give in this way the first \prco of the result
(our \algo is \cot proved).

For a sub-$\R$-module $M$ of an  \Rmo $N$, where  $\R$
is an integral domain, the \textsl{saturation of $M$ in $N$} is the \Rmo
\[\Sat_{\R,N}(M)=\sotq{x\in N}{\exists a\in \R^*,\,ax\in M}.
\]
When there are several possible rings, we will precise ``the $\R$-saturation of $M$ in $N$''.
If  $N$
is a free module ($\R^n $, $n\in \NN$ or $\R^{(I)}$, $I$ infinite)
we get by scalar extension a \Kev $\K\otimes N\simeq\K^n $ or $\K^{(I)}$, where $\K$ is the quotient field of $\R$. We get also  $\Sat_{\R,N}(M)=\K.M\cap N$,
where  $\K.M$ is the sub-$\K$-\evc of $\K\otimes N$  generated by $M$.
%: rem1
The module $M$ is said to be  $\R$-saturated when it is equal to its $\R$-saturation.
In \gnl, the $\R$-saturation of a \tf $\RX$-module in $N=\RX^n $ is not itself a \tf $\RX$-module,
but this happens when $\R$ is a \ddv $\V$.

In Section \ref{satVmotf} we give an incremental \algo for calulating a basis of the $\V$-saturation of a
\tf sub-module of a free \Vmo (with a basis possibly infinite).
This \algo is not a scoop, but we give it in a form such that we are able to
use it, in Section \ref{satVXmotf}, for calculating a finite generating set of the $\VX$-module we get by  $\V$-saturating a \tf sub-$\VX$-module of
 $\VX^n$ ($n\in \NN^*$).

This proves that the $\V$-saturation of a \tf $\VX$-module inside
 $\VX^n$ is indeed a \tf $\VX$-module.

Finally, as an immediate corollary we obtain in
Section \ref{VXsysygies} the computation of a finite generating set for the syzygy module over $\VX$ of a finite family of vectors in $\VX^k$.

\medskip
In this paper, all rings are commutative and unitary.

%%%%%%%%%%%%%%%%%%%%%%%%%%%%%%%%%%%%%%%%%%%%%%%%%%%%%%%%
\section{Saturation of a \tf \Vmo in a free \Vmo} \label{satVmotf}

\noindent  {\bf Terminology.}
In this paper we use the \cov terminology found in
 \citealt*{MRR}, which is well suited for Computer Algebra.

\smallskip For an arbitrary ring $\R$ we denote by $\R\eti$ the multiplicative group of units in $\R$.
The ring~$\R$ is \textsl{discrete} when we have an \algo deciding if  $x=0$ or $x\neq 0$ for an arbitrary \elt of~$\R$.
A ring~$\R$ is \textsl{local} when we have explicitly the implication
\[
\forall x,y\in\R,\,x+y\in\R\eti\; \Longrightarrow\;(x \in\R\eti\;\vu \;y\in\R\eti)
\]
It is equivalent to ask
$$
\forall x\in\R,(x \in\R\eti\;\vu \;1+x\in\R\eti)
$$
A nontrivial local ring $\R$ has a unique maximal ideal which is its \textsl{Jacobson radical}
\[\Rad(\R)=\sotq{x\in\R}{1+x\R\subseteq \R\eti}
\]
(for any ring, in \clama, this ideal is the intersection of \idemas).

The quotient ring $\k=\R/\Rad(\R)$ is a field, called \textsl{residual field} of $\R$.
The local ring $\R$ is said to be  \textsl{residually discrete} when we have explicitly the disjunction
\[\forall x\in\R,\,(x\in\R\eti\;\vu\;x\in\Rad(\R)) .
\] 
In this case, the residual field is a \cdi; we have an \algo deciding the disjunction
``$x=0$ or $x$ invertible'' for $x\in\k$.

\medskip Let $I$ be a set, finite or infinite, with a  ``discrete'' linear order, i.e. we have an \algo deciding the disjunction
\[i<j \quad \vu\quad  i=j \quad \vu\quad  i>j\]
for $i,j\in I$. In this case, for any ring $\R$, we have the free module $\R^{(I)}$ with the natural basis $(\e_i)_{i\in I}$. If $\R$ is discrete
 any vector $a=\sum_{i\in J} a_i\e_i$ in  $\R^{(I)}$
($J$ is a finite part of $I$) can be tested zero or nonzero, and if it is nonzero, we can find the minimal
$i$ for which $a_i\neq 0$.

In order to determine a saturated sub-\Rmo of a free module $\R^{(I)}$
we shall use  Lemmas~\ref{lemSat1} and~\ref{lemSat2}.

%:     Lemma{lemSat1}
\begin{lemma} \label{lemSat1}
Let $\R$ be a domain and $N$ a torsion-free \Rmo. If $N=M\oplus P$
then $M$ is saturated in $N$.
\end{lemma}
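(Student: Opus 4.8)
The plan is to unwind the definition of saturation directly. Recall that for a domain $\R$ and a sub-$\R$-module $M$ of a torsion-free $\R$-module $N$, we have
\[
\Sat_{\R,N}(M)=\sotq{x\in N}{\exists a\in \R^*,\;ax\in M},
\]
and $M$ is saturated exactly when this set equals $M$. Since the inclusion $M\subseteq \Sat_{\R,N}(M)$ is automatic (take $a=1$), the only thing to prove is the reverse inclusion: if $x\in N$ and $ax\in M$ for some nonzero $a\in\R$, then $x\in M$.

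So let $x\in N$ with $ax\in M$, $a\in\R^*$. Using the decomposition $N=M\oplus P$, write $x=m+p$ with $m\in M$, $p\in P$. Then $ax=am+ap$, and $am\in M$ while $ap\in P$. On the other hand $ax\in M$, so $ax=ax+0$ is the decomposition of $ax$ along $M\oplus P$. By uniqueness of the decomposition in a direct sum, $ap=0$. Since $N$ is torsion-free and $a\neq 0$, this forces $p=0$, hence $x=m\in M$. This proves $\Sat_{\R,N}(M)\subseteq M$, and therefore $M=\Sat_{\R,N}(M)$, i.e. $M$ is saturated in $N$.

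There is essentially no obstacle here: the argument is a three-line chase through the direct-sum decomposition together with the torsion-free hypothesis, which is used precisely once (to pass from $ap=0$ to $p=0$). The role of $P$ being a complement — rather than merely a submodule — is what gives the uniqueness of the $M$-component, and that is the only subtlety worth flagging.
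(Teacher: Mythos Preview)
Your proof is correct and is exactly the natural argument; the paper in fact states this lemma without proof, treating it as immediate. Your write-up makes explicit the one-line chase (uniqueness of the direct-sum decomposition plus torsion-freeness) that the authors left to the reader.
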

%--------- fin lemma ----------------------------------------------

 We now assume in this section that $\R$ is an integral residually discrete local ring with \hbox{$\k=\R/\Rad(\R)$}.
%:     Notation{notaPiv}
\begin{definota} \label{notaPiv}
{\rm  ~
\begin{enumerate}
\item A vector $C=\sum_i c_i \e_i$ of $\R^{(I)}$
 (seen as a column vector)
 is said to be \textsl{primitive} if it is \rdt nonzero, i.e., one \coe
 is a unit. In this case we denote by $\piv(C)$ the smallest index
 $i$ for which $c_i$ is \rdt nonzero. We call it \textsl{the pivot index of $C$}. We denote by $\cq(C)$
 (the \textsl{pivot coefficient of $C$}) the corresponding scalar $c_i$.
\item A finite family $(C^k)_{k\in K}$ of primitive vectors is said to be \textsl{in a $\k$-echelon form}
if the pivot indices $\piv(C^k)$ are distinct. A family in a $\k$-echelon form is also simply called \textsl{in an echelon form}.
In this case we denote
\[
  \piv(C)=\sotq{\piv(C^k)}{k\in K}.
\]
A matrix with \coes in $\R$ is said to be in an echelon form if its column vectors are primitive and form a  family in an echelon form.
\item When $K$ is linearly ordered, the family $(C^k)_{k\in K}$ is said to be \textsl{in a strict $\k$-echelon form} if it is in an echelon form and if moreover, for $j<k$ in $K$ the \coe of index $\piv(C^j)$ in $C^k$ is zero.
\end{enumerate}
  }
\end{definota}
%--------- fin notation ----------------------------------------------

\hum{Pour définir, and pour construire dans le cas d'un \ddv, une forme  $\k$-staggered
stricte, il n'y a pas besoin de supposer que l'anneau local est \dcd.
L'important est que chaque column soit affectée d'un indice pivot avec le \coe correspondant invertible, and que en suivant l'ordre de $K$, les \coes d'une column pour les indices pivots de columns précédentes soient nuls.
}

%:     Lemma{lemSat2}
\begin{lemma} \label{lemSat2}\label{lemEchStrict}
If a finite family $C=(C^k)_{k\in K}$ of vectors of $\R^{(I)}$
is in a $\k$-echelon form, then it is~a basis of the \Rmo $M$ generated by itself  and $M$ has the following free \Rmo
as direct summand
\[
 P= \bigoplus\nolimits_{j\in J}\R\,\e_j\quad \hbox{avec } J=\sotq{j\in I}{j\notin \piv(C)}.
 \]
Moreover the set $\piv(C)$ does not depend on the module $M$. Indeed, an index $j$ belongs to $\piv(C)$ \ssi there exists a primitive vector $U$
in $M$ such that $\piv(U)=j$.
\end{lemma}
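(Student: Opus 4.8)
The plan is to proceed in three stages: first establish that $C$ is a basis of $M$, then exhibit $P$ as a direct summand, and finally characterize $\piv(C)$ intrinsically.

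\textbf{Step 1: $C$ is a basis of $M$.} Write $p_k=\piv(C^k)$ for $k\in K$, so the $p_k$ are pairwise distinct. Since each $C^k$ is primitive with pivot coefficient $\cq(C^k)$ a unit, and the $p_k$-th coordinate of $C^\ell$ for $\ell\neq k$ contributes nothing at index $p_k$ that cannot be absorbed, the point is that the $K\times K$ matrix $\big(\text{(coordinate }p_k\text{ of }C^\ell)\big)_{k,\ell}$ has units on the diagonal and arbitrary entries off-diagonal. Over the local ring $\R$ this matrix becomes, after reduction mod $\Rad(\R)$, a matrix with invertible diagonal over the field $\k$; more to the point, I would argue directly: if $\sum_\ell \lambda_\ell C^\ell=0$ with $\lambda_\ell\in\R$, look at the coordinate of index $p_k$ where $p_k$ is the pivot index with, say, $\lambda_k$ of maximal "non-divisibility" — but the cleanest route is to order $K$ and do an \'echelon reduction: replacing $C$ by the strict $\k$-\'echelon form obtainable by unimodular column operations (each step subtracts a multiple of one $C^j$ from a later $C^k$ to kill the $p_j$-coordinate, using that $\cq(C^j)$ is a unit), which does not change $M$ nor the multiset of pivot indices. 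For a family in strict $\k$-\'echelon form, a relation $\sum\lambda_\ell C^\ell=0$ forces $\lambda_\ell\cq(C^\ell)\in\Rad(\R)$ at the largest pivot index, hence $\lambda_\ell\in\Rad(\R)$; one then does not immediately get $\lambda_\ell=0$, so instead I would note that the submatrix on the pivot rows is triangular with unit diagonal, hence invertible over $\R$, so the only relation is trivial and $C$ is free.

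\textbf{Step 2: $M\oplus P=\R^{(I)}$.} With $J=\{j\in I : j\notin\piv(C)\}$ and $P=\bigoplus_{j\in J}\R\,\e_j$, I want to show every $v\in\R^{(I)}$ is uniquely $m+p$ with $m\in M$, $p\in P$. Pass to the strict $\k$-\'echelon form $(D^k)$ of $C$ (same $M$, same pivot set). Given $v$, successively subtract the right $\R$-multiple of each $D^k$ to clear the coordinate of index $p_k$: at each stage, because $\cq(D^k)$ is a unit and, in strict \'echelon form, $D^k$ has zero coordinate at the pivot indices $p_j$ with $j<k$, the subtractions do not reintroduce nonzero entries at previously-cleared pivot positions. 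After finitely many steps the remainder has all pivot-index coordinates zero, i.e. lies in $P$; this gives $v\in M+P$. For the directness $M\cap P=0$: an element of $M\cap P$ is $\sum\lambda_k D^k$ with all coordinates at indices in $\piv(C)$ equal to zero; reading off index $p_k$ for the largest $k$ gives $\lambda_k\cq(D^k)=0$, so $\lambda_k=0$ since $\R$ is a domain and $\cq(D^k)\neq 0$, and descending induction on $K$ kills all $\lambda_k$. So the sum is direct and $P$ is a free direct summand of $\R^{(I)}$, hence also $M$ is a complemented submodule.

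\textbf{Step 3: intrinsic description of $\piv(C)$.} One inclusion is immediate: each $C^k\in M$ is primitive with $\piv(C^k)=p_k$, so every element of $\piv(C)$ is realized. Conversely, suppose $U\in M$ is primitive with $\piv(U)=j$; I must show $j\in\piv(C)$. Write $U=\sum_k\lambda_k D^k$ using the strict \'echelon basis. Reducing modulo $\Rad(\R)$ and using that the residual field $\k$ is discrete: in $\k^{(I)}$ the image $\ov U$ is a nonzero vector in the $\k$-span of the $\ov{D^k}$, and the $\ov{D^k}$ are in reduced row-\'echelon-like form with leading positions exactly $\{p_k\}$; hence the first nonzero coordinate of any nonzero combination occurs at some $p_k$. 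Since $\piv(U)=j$ means $j$ is the smallest index where $U$ has a residually-nonzero (unit) coordinate, i.e. the first nonzero coordinate of $\ov U$ is at $j$, we conclude $j=p_k$ for some $k$, i.e. $j\in\piv(C)$. This also shows $\piv(C)$ depends only on $M$, not on the chosen generating family.

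\textbf{Main obstacle.} The delicate point is Step 1 / the first half of Step 2: over a general local ring a relation only yields coefficients in $\Rad(\R)$, not zero, so one cannot naively "descend" — the fix is to extract the pivot-row submatrix, which is (after the \'echelon reduction) triangular with unit diagonal and therefore genuinely invertible over $\R$, upgrading "coefficients in $\Rad(\R)$" to "coefficients zero." Making the \'echelon reduction rigorous (that unimodular column operations preserve both $M$ and the pivot multiset, and terminate) is routine but is where all the bookkeeping lives; everything in Step 3 then rides on reduction modulo $\Rad(\R)$ together with discreteness of $\k$, which makes "first residually-nonzero coordinate" well-defined and matches the notion of $\piv$.
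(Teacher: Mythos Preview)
Your argument is correct, but it takes a longer route than the paper's. The paper handles Steps~1 and~2 in one stroke: assuming $I$ finite, it adjoins the vectors $\e_j$ for $j\in J$ to the family $(C^k)_{k\in K}$, orders the combined family by increasing pivot index, and observes that the resulting square matrix is residually triangular with units on the diagonal, hence residually invertible, hence invertible over the local ring $\R$. That single invertibility statement immediately gives $\R^{(I)}=M\oplus P$ with $(C^k)$ a basis of $M$ --- no reduction to strict \'echelon form, no separate spanning/directness arguments. Your approach instead first passes to strict \'echelon form (implicitly invoking what the paper later isolates as Lemma~\ref{lemPivG} to justify that pivot indices are preserved), then works with the $|K|\times|K|$ pivot-row submatrix. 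Both arguments ultimately rest on ``residually invertible $\Rightarrow$ invertible'' over a local ring, but the paper applies it to one $|I|\times|I|$ matrix rather than splitting the work.

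One small slip: in your directness argument in Step~2, you read off the coordinate at $p_k$ for the \emph{largest} $k$, but the strict \'echelon condition (``for $j<k$, the $\piv(C^j)$-coordinate of $C^k$ vanishes'') makes the pivot-row submatrix \emph{lower} triangular in your ordering, so you should start with the \emph{smallest} $k$. This does not affect the conclusion, since a triangular matrix with unit diagonal is invertible either way, and indeed you could simply reinvoke the invertibility already established in Step~1. Your Step~3 is fine and fills in what the paper leaves to the reader.
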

%--------- fin lemma ----------------------------------------------
NB. The two free \Rmos in the lemma are always $\R$-saturated, a consequence of  Lemma~\ref{lemSat1}.
\begin{proof}
We give the proof when $I$ is finite, the \gnl case is similar.
We linearly order the family  $(\e_j)_{j\in J} \cup (C^{k})_{k\in K}$ using increasing pivot indices.
The  matrix we obtain is \rdt triangular with invertible \coes on the diagonal, so it is \rdt invertible, and it is invertible. This shows that  $M$ and $P$ are direct summands and admit the desired bases. The reader can finish the proof.
\end{proof}

\subsection*{The saturation \algo}

%:     context
\begin{context} \label{context1}

\noindent Let $\V$ be a \ddv, i.e.\ a domain in which for all  $a,b$
either $a\mid b$ or~$b\mid a$, i.e.\ more precisely we have an algorithm deciding   (for $a,b$ given in $\V$) the disjunction
\[
\exists x\in\V,\,a=xb\quad \vu\quad \exists x\in\V,\,b=xa
\]
and gives the element  $x$.
We know that $\V$ is a local ring (supposing that $a+b$ is invertible, if~$a$ divides~$b$ then it divides  $a+b$ and thus it is invertible, if $b$ divides $a$ then it divides $a+b$ and is invertible).
We denote by $\K$ its quotient field and $\k$ its residual field.
As $\V$ is supposed to be integral in an explicit way, the field $\K$
is a discrete field.
Furthermore we suppose that $\V$ is  \textsl{residually discrete}, that is we have an algorithm deciding whether an element in  $\V$
is a unit. In particular Lemmas \ref{lemSat1} and \ref{lemSat2} hold with the ring   $\V$.
\end{context}

\medskip  \noindent \textsl{Remark.}
In a residually discrete valuation domain, we have a  test answering the question
 \gui{$a\mid b$?}. Indeed, for $a,b$ nonzero, if $a=bx$, then $a\mid b$ if and only if $x$ is a unit.

\medskip
We consider a sub-\Vmo $M=\V \,a^1+\cdots+\V \, a^m$ of $\V^{(I)}$.
The goal of this section is to give an algorithm for computing a basis for the  $\V$-saturation of $M$ in $\V^{(I)}$, a $\V$-module denoted, in short, by
$\Sat(M)$.
In fact, since only a finite number of indices are at stake, we can suppose that $I$ is finite and that $M$ is generated by the columns of a matrix $F$.
In order to visualise this we can write the  rows of the matrix accordingly to a nonincreasing order for the indices.

A rough way to  calculate $\Sat(M)$ could be to reduce  $F$
to the  Smith form by elementary operations. We thus see that after a suitable basis change, the module $M$
is generated by   vectors $v_if_i$ (where the $f_i$ are part of a basis and the $v_i$ are nonzero). In these conditions, the module $\Sat(M)$
is simply the module generated by these  $f_i$.
This tells us that $\Sat(M)$  is a free $\V$-module having as  direct summand another
free $\V$-module.

\smallskip
 In fact, we prefer to proceed in a softer way and obtain a basis for   $\Sat(M)$ as the columns vectors of a matrix $G$ in a
 $\k$-echelon form that we compute from   $F$ by means of very simple operations.

A first operation, of a reduction of a vector, that we denote by
$\RedPrim$ is to replace a vector~$a$, supposed nonzero, with
$a/u$, where $u$ is a gcd of its coefficients, for example $u$ is the coefficient with minimal index among those dividing all the others, in which case the  pivot coefficient of the reduced  vector is $1$.

The operations that we perform on the  matrix $F$ to put it in a strict
$\k$-echelon form $G$
such that $\Sat(\Im(F))= \Im(G)$  is the following. We proceed by dealing, one by one, with the columns of the initial matrix.
Note that at the beginning, the  empty matrix vide is in a strict $\k$-echelon form.

Suppose that we have treated some initial columns of the matrix and that we have obtained a matrix in a strict $\k$-echelon form with columns
$C^1,\dots,C^r$.

We want to treat a new  column, that we call $C=\sum_i c_i\e_i$.
We proceed as follows.
\begin{enumerate}
\item  Gaussian elimination:  we perform classical elementary operations on  columns
  $$
  C\leftarrow C- \frac{c_s}{c_{j,s}}\,C^j,
  $$
here $s=\piv(C^j)$
and $c_{j,s}=\cq(C^j)$.
This operation is made  successively with the columns $C^1,\dots,C^r$.
We then obtain a column $C'$.
\item If $C'=0$, we don't add it.
The matrix remains in a strict   $\k$-echelon form.
The \Kev generated by the columns $C^1,\dots,C^r,C$ has $(C^1,\dots,C^r)$ as basis.
\item If $C'\neq 0$, we replace $C'$ with its reduced primitive form
$C''=\RedPrim(C')$. We then add $C''$ as last  column $C^{r+1}$ of the matrix.
And the new matrix is in a strict  $\k$-echelon form.
The  \Kev generated by the columns $C^1,\dots,C^r,C$ has
$(C^1,\dots,C^r,C'')$ as basis.
\end{enumerate}

By construction $\Im(G)$ is contained in  $\Sat(\Im(F))$ and the \Vmo $\Im(G)$ is saturated since $G$ is in a strict $\k$-echelon form.
In fact, in both case 2. and case 3., we see by \recu that we have constructed  a basis for the $\V$-saturation generated by the first  columns (until the column $C$). At the end of this procedure we  have  $\Sat(\Im(G))=\Sat(\Im(F))$.
Thus our \algo achieves its goal.

%:     Theorem{thAlgoSat}
\begin{theorem} \label{thAlgoSat}~

\noindent The saturation algorithm described above computes, from a matrix   $F$ with coefficients in $\V$, a matrix~$G$  in a strict $\k$-echelon form such that  $\Im(G)=\Sat(\Im(F))$.
\\
This algorithm is  \gui{incremental} in the following sense. If we treat a matrix
$[\,F_1\mid F_2\,]$, we obtain~a  matrix $[\,G_1\mid G_2\,]$ where $G_1$
is a matrix obtained by treating the  matrix $F_1$.
\end{theorem}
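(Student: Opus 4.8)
The plan is to prove, by induction on the number of columns already treated, the invariant: after treating the first $\ell$ columns $a^1,\dots,a^\ell$ of $F$, the accumulated matrix $G^{(\ell)}$ is in strict $\k$-echelon form and $\Im(G^{(\ell)})=\Sat(\V a^1+\cdots+\V a^\ell)$. The base case $\ell=0$ is the empty matrix, for which both assertions are trivial (recall $\Sat(0)=0$). The theorem is then the case $\ell=m$; termination is clear since each of the $m$ columns is processed by a finite Gaussian elimination followed by at most one call to $\RedPrim$. The incrementality statement follows immediately from the column-by-column shape of the procedure: the treatment of a column depends only on the matrix accumulated before it, so on input $[\,F_1\mid F_2\,]$ the state reached after exhausting the columns of $F_1$ is exactly the output of the algorithm run on $F_1$ alone, and the remaining steps produce $[\,G_1\mid G_2\,]$.

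For the inductive step, write $C^1,\dots,C^r$ for the columns of $G^{(\ell-1)}$ and $C$ for the new column. First I would check the echelon bookkeeping. The Gaussian elimination step annihilates, one after another, the coefficient of the running vector at each pivot index $\piv(C^j)$; because $G^{(\ell-1)}$ is in \emph{strict} echelon form, the operation attached to $C^j$ does not disturb the coefficients already cleared at $\piv(C^1),\dots,\piv(C^{j-1})$, so the resulting $C'$ vanishes at every index of $\piv(G^{(\ell-1)})$. In case 2 ($C'=0$) nothing is appended and the form is preserved; in case 3, $C''=\RedPrim(C')$ is primitive with pivot coefficient $1$, its pivot index lies outside $\piv(G^{(\ell-1)})$, and it still vanishes at all those indices, so the enlarged family is again in strict $\k$-echelon form. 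By Lemma~\ref{lemSat2} the columns of $G^{(\ell)}$ form a basis of $\Im(G^{(\ell)})$, which is a direct summand of $\V^{(I)}$ and hence $\V$-saturated by Lemma~\ref{lemSat1}.

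It then remains to establish the saturation equality, and here the key is a one-line remark: if $W\subseteq\V^{(I)}$ is $\V$-saturated and a submodule $N\subseteq\V^{(I)}$ satisfies $\K.N=\K.W$ inside $\K\otimes\V^{(I)}$, then $\Sat(N)=\K.N\cap\V^{(I)}=\K.W\cap\V^{(I)}=\Sat(W)=W$. I would apply this with $W=\Im(G^{(\ell)})$ (saturated by the previous paragraph) and $N=\V a^1+\cdots+\V a^\ell$. The hypothesis $\K.N=\K.W$ holds because the induction hypothesis gives $\K.\Im(G^{(\ell-1)})=\K.(\V a^1+\cdots+\V a^{\ell-1})$ (a module and its $\V$-saturation have the same $\K$-span), while each Gaussian elimination step and the $\RedPrim$ step only replace the running vector by a nonzero $\K$-multiple of itself adjusted by $\K$-combinations of $C^1,\dots,C^r$; hence $\K.\langle C^1,\dots,C^r,C''\rangle=\K.\langle C^1,\dots,C^r,C\rangle$ in case 3, and in case 2 the relation $C'=0$ says that $C$ already lies in $\K.\langle C^1,\dots,C^r\rangle$, so both $\K$-spans coincide as well. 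Thus the remark applies and $\Sat(\V a^1+\cdots+\V a^\ell)=\Im(G^{(\ell)})$, closing the induction. I do not anticipate a serious obstacle: the only delicate point is the interplay just used between the word \emph{strict} in the echelon condition — which is what prevents the Gaussian elimination from reintroducing nonzero entries at earlier pivots — and the tracking of $\K$-spans through the elimination.
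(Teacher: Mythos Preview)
Your proof is correct and follows essentially the same route as the paper: the paper's argument (given in the paragraph just before the theorem) is precisely the induction on the number of columns treated, using Lemmas~\ref{lemSat1} and~\ref{lemSat2} to see that $\Im(G^{(\ell)})$ is saturated and observing that the $\K$-span is preserved through cases~2 and~3. You have simply made explicit the details the paper leaves to the reader, in particular why the word \emph{strict} guarantees that Gaussian elimination does not reintroduce entries at earlier pivots.
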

%--------- fin theorem ----------------------------------------------

\medskip
The following lemma will be useful for the next section.

%:     Lemma{lemPivG}
\begin{lemma} \label{lemPivG}
In the procedure ``Gaussian elimination'' described above, if $C$ is primitive and if the index $\piv(C)$ is distinct from the  $\piv(C^j)$
for $j=1,\dots,r$, then the obtained vector $C'$ is primitive with $\piv(C)=\piv(C')$.
\end{lemma}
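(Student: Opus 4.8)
The plan is to follow the column being reduced and to control its coordinates, modulo $\Rad(\V)$, at all indices less than or equal to $p:=\piv(C)$. By hypothesis $p$ differs from every pivot index $s_j:=\piv(C^j)$, $j=1,\dots,r$, so each elimination step $C\leftarrow C-\frac{c_{s_j}}{\cq(C^j)}\,C^j$ falls into one of the two cases $s_j>p$ or $s_j<p$, which I would treat separately.

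Concretely, I would prove by induction on $t\in\{0,1,\dots,r\}$ the invariant: after the elimination steps against $C^1,\dots,C^t$ have been performed, the current column is congruent to the original $C$ modulo $\Rad(\V)$ at every index $\le p$. The case $t=0$ is trivial. For the inductive step, look at the step against $C^t$ ($t\ge1$); write $s:=\piv(C^t)$ and let $\lambda$ be the scalar used, so $\lambda$ equals the current coordinate at index $s$ divided by $\cq(C^t)$. If $s>p$, then every index $i\le p$ satisfies $i<\piv(C^t)$, hence $(C^t)_i\in\Rad(\V)$ by the very definition of the pivot index; subtracting $\lambda\,C^t$ therefore leaves the residues of the current column unchanged at all indices $\le p$. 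If $s<p$ (the equality $s=p$ being excluded by hypothesis), then $s\le p$, so by the induction hypothesis the current coordinate at index $s$ is congruent modulo $\Rad(\V)$ to the coordinate of the original $C$ at index $s$; but $s<p=\piv(C)$ forces that coordinate to lie in $\Rad(\V)$, and since $\cq(C^t)$ is a unit we get $\lambda\in\Rad(\V)$, so $\lambda\,C^t$ is residually zero everywhere, and in particular nothing changes modulo $\Rad(\V)$ at indices $\le p$. This closes the induction.

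Applying the invariant with $t=r$ to the final vector $C'$: its coordinate at each index $i<p$ lies in $\Rad(\V)$, while its coordinate at index $p$ is congruent modulo $\Rad(\V)$ to $\cq(C)$, hence is a unit. Thus $C'$ is residually nonzero, i.e.\ primitive, and the least index at which it is residually nonzero is exactly $p$, that is $\piv(C')=p=\piv(C)$.

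The one point requiring care is the strength of the induction hypothesis: it must record the congruence at all indices $\le p$, not merely at $p$ itself, because when eliminating against a column $C^t$ with pivot $s<p$ the multiplier $\lambda$ reads off the current coordinate at $s$, and one needs to know that this coordinate is still in $\Rad(\V)$ at that moment. I note in passing that the strictness of the echelon form is not used in this argument: primitivity of the $C^j$ together with $\piv(C)\notin\{\piv(C^1),\dots,\piv(C^r)\}$ is all that is needed.
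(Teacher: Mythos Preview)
Your proof is correct and follows essentially the same approach as the paper: a case split on whether the pivot $s$ of the column $C^j$ being used lies below or above $p=\piv(C)$, combined with the observation that in each case the subtracted term is residually zero at the relevant index. The paper's proof is terser---it only tracks the coefficient at row $\ell=\piv(C)$ across a single step---while you make explicit the stronger invariant (congruence at \emph{all} indices $\le p$) and the induction over the $r$ elimination steps; your remark that this strengthening is genuinely needed (to know that the multiplier $\lambda$ lies in $\Rad(\V)$ when $s<p$) pinpoints exactly what the paper leaves implicit.
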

%--------- fin lemma ----------------------------------------------
%
\begin{proof}{}
We consider the affectation $C\leftarrow C- \frac{c_s}{c_{j,s}} C^j$ where $s=\piv(C^j)$
and $c_s$ is the coefficient of $C$ on the row $s$.
Set $\ell=\piv(C)$. The \coe $c_\ell$ on the row $\ell$ of $C$ is  replaced with  $c_\ell - \frac{c_s}{c_{j,s}}\cdot c_{j,\ell}$, where $c_{j,\ell}$ is the \coe  on the row $\ell $ of $C^j$.
If $\ell >s$, $c_s$
is \rdt null, if $\ell <s$ then $c_{j,\ell}$ is \rdt null, in both cases the \coe $c_\ell$ remains \rdt
unchanged.
\end{proof}
%

%%%%%%%%%%%%%%%%%%%%%%%%%%%%%%%%%%%%%%%%%%%%%%%%%%%%%%%%
\section{The $\V$-saturation of a finitely generated  $\VX$-module} \label{satVXmotf}

The work we are doing  in this section is a little bit more delicate and seems, strangely, completely new.
It  achieves in Computer Algebra a simple theoretical result which is apparently new, and which  a fortiori lacks a constructive proof.

%:     Theorem{thSat}
\begin{theorem} \label{thSat} We are in the context \ref{context1}.
If $M$ is a finitely generated sub-$\VX$-module of $\VX^n$ then the $\V$-saturation of
$M$ in  $\VX^n$ is a finitely generated  $\VX$-module.
\end{theorem}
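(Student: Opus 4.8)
The strategy is to reduce the polynomial problem to the "constant" problem of Section~\ref{satVmotf} by viewing $\VX^n$ through its filtration by degree. For each $d\in\NN$ let $E_d\subseteq\VX^n$ be the free $\V$-submodule of vectors whose entries have degree $<d$; it is free of rank $nd$ with the obvious monomial basis, and $\VX^n=\bigcup_d E_d=\V^{(I)}$ for the countable linearly ordered index set $I$ of pairs (coordinate, monomial), ordered so that lower degree comes first. If $M=\VX\,g^1+\cdots+\VX\,g^m$ with each $g^i$ of degree $<d_0$, then for every $d\ge d_0$ the truncated module
\[
M_d \;=\; \sum_{i=1}^m\ \sum_{0\le j,\ \deg(g^i)+j<d}\ \V\,X^j g^i
\]
is a finitely generated $\V$-submodule of the free module $E_d$, and $M=\bigcup_{d\ge d_0}M_d$. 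The key observation is that $\Sat_{\V,\VX^n}(M)=\bigcup_d \Sat_{\V,E_d}(M_d)$, because a denominator $a\in\V^*$ clearing $x\in\VX^n$ only sees finitely many coefficients. Now run the incremental saturation algorithm of Theorem~\ref{thAlgoSat} on the columns of $M_d$, feeding in the blocks for $d=d_0,d_0+1,\dots$ in order of increasing degree; by incrementality we get for each $d$ a matrix $G_d$ in strict $\k$-echelon form with $\Im(G_d)=\Sat_{\V,E_d}(M_d)$, and the pivot-index sets are nested: $\piv(G_{d_0})\subseteq\piv(G_{d_0+1})\subseteq\cdots\subseteq I$.

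The crux is therefore to prove that this increasing chain of pivot sets \emph{stabilises}, i.e.\ that there is a $d$ with $\piv(G_d)=\piv(G_{d+1})$, and that once it stabilises for one step it stabilises forever and the resulting echelon basis generates the full $\V$-saturation of $M$. For the "forever" part I would argue as follows: if $\piv(G_d)=\piv(G_{d+1})$, consider a generator $X^j g^i$ that first appears at degree $d+2$; reducing it against the columns of $G_{d+1}$ by Gaussian elimination produces a vector $C'$ supported in degrees $\ge d$; I want to show $C'$ reduces to $0$. Here is where the polynomial structure enters: the module $\Im(G_{d+1})$ is stable under a suitable "shift", and the fact that no new pivot appeared between degree $d$ and $d+1$ forces — because $\Sat$ commutes with localisation at $\K$ and by Lemma~\ref{lemPivG} — that multiplying a saturated echelon configuration by $X$ cannot create a genuinely new pivot one degree higher either. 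Concretely, $\dim_\K(\K M_d\cap \K^{nd})$ as a function of $d$ is exactly $\#\piv(G_d)$, and this dimension, being a Hilbert-type function of the finitely generated $\KX$-module $\K M\subseteq\KX^n$, is eventually a polynomial in $d$; combined with the fact that $\Sat_{\V,\VX^n}(M)$ is $\V$-saturated hence (after $\K$-extension) equals $\K M\cap\VX^n$, one gets that the echelon data cannot keep growing once the $\K$-dimension count matches what the $\KX$-module forces.

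Assuming stabilisation at degree $d$, the finite family of columns of $G_d$ is claimed to generate $\Sat_{\V,\VX^n}(M)$ as a $\VX$-module. One inclusion is immediate since each column of $G_d$ lies in $\Sat_{\V,E_d}(M_d)\subseteq\Sat_{\V,\VX^n}(M)$. For the other inclusion one shows by induction on $e\ge d$ that $\Sat_{\V,E_e}(M_e)\subseteq \VX\text{-span of the columns of }G_d$: given a primitive vector $U\in E_{e+1}$ in the saturation, its pivot index lies in $\piv(G_{e+1})=\piv(G_d)$ by stabilisation and by the intrinsic characterisation of pivot indices in Lemma~\ref{lemSat2}; subtracting the appropriate scalar multiple of the corresponding column of $G_d$, or of $X$ times such a column when degrees force it, lowers the degree or kills the leading part, and one descends. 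This is the step I expect to be the main obstacle: making the bookkeeping precise, i.e.\ proving that "$X$ times a column of $G_d$" together with lower columns suffices to absorb everything in the higher truncations once pivots have stabilised — in other words that stabilisation of the pivot set genuinely propagates the $\VX$-module structure. The coherence of $\KX^n$ over the field case and the explicit strict echelon form do all the real work here; the termination test promised in the abstract (comparing the two Hilbert series over $\K$ and over $\k$) is exactly the effective detection of this stabilisation.
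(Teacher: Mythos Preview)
Your setup is right in spirit—filter by degree, run the incremental saturation algorithm of Theorem~\ref{thAlgoSat}, and look for a stopping criterion—but the criterion you propose cannot work. The chain of pivot sets $\piv(G_d)\subseteq\piv(G_{d+1})\subseteq\cdots$ \emph{never} stabilises when $M\neq 0$: as the paper itself remarks, the $\V$-module $\Im(G_k)$ grows strictly at every step, so $\#\piv(G_d)$ is strictly increasing. You even compute this yourself: $\#\piv(G_d)=\dim_{\K}(\K M_d)$ is a Hilbert-type function, hence eventually a \emph{linear} polynomial in $d$, not eventually constant. So ``$\piv(G_d)=\piv(G_{d+1})$'' is simply false, and every subsequent step of your argument (one-step stabilisation implies permanent stabilisation, descent by subtracting echelon columns) rests on this false premise.

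What the paper does instead is track a finer invariant of the \emph{new} block $H_k$ (the columns added when passing from $G_{k-1}$ to $G_k$), and crucially it uses the lexicographic order with the coordinate index $i\in\{1,\dots,n\}$ \emph{primary} and the exponent secondary—not ``degree first'' as you chose. Writing $\piv(C)=(\idx(C),\PrimMon(C))$, the paper defines the \emph{defect} $\delta_k$ as the number of columns $C$ of $H_k$ for which some other column $C'$ of $H_k$ has $\idx(C')=\idx(C)$ but larger $\PrimMon$. The key facts are: (i) by Lemma~\ref{lemPivG}, multiplication by $X$ shifts each pivot $(j,r)$ of $H_k$ to $(j,r+1)$ in $H_{k+1}$ unless this collides with a pivot already in $G_0$, and a collision either kills the column or lands it on a new index; (ii) a counting argument with the ``available space'' $\Delta_k$ shows $\delta_k$ is nonincreasing and must reach~$0$; (iii) once $\delta_k=0$, the columns of $H_{k+1}$ lie in $\Im(G_k)+X\,\Im(H_k)$, so the $\VX$-module generated by $G_k$ already contains all future $G_\ell$ and hence equals $\Sat_{\V,\VX^n}(M)$. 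The quantity that stabilises is thus not the pivot set but the shape of each new block $H_k$: eventually $H_k$ has exactly one column per occupied coordinate index, and from then on $H_{k+1}$ is essentially $X H_k$. Your Hilbert-function intuition is pointing at the right phenomenon—the growth rate of $\#\piv(G_d)$ becomes exactly $n_k$—but you need to detect when the \emph{increment} has the right structure, not when the set itself stops moving.
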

%--------- fin theorem ----------------------------------------------

\noindent {\it Remark.} Note that in classical mathematics, every \ddv satisfies the hypotheses of context~\ref{context1} by making use of the law of the excluded middle.
Hence, our constructive proof of \tho~\ref{thSat}  gives also a proof in
classical mathematics with the only hypothesis that $\V$
is a \ddv. The same remark applies for all the results of this  article.

\medskip The \dem of the \tho follows from the correctness
of the \algo computing  a finite generating set for the $\V$-saturation.

Seen as a $\VX$-module,  we have the natural basis of   $\VX^n$
denoted by $(\f_1,\dots,\f_n)$.
We are then interesed in  a natural basis of  $\VX^n$ as \Vmo, which is made up of $\e_{i,k}=X^k\,\f_i$ with the indices set  $I=\intervalle{1..n}\times \NN$.
We equip $I$ with the  lexicographic order for which
 $$
X^h\, \f_i < X^k\,\f_j \,\hbox{ if }\, i<j \;\hbox{ or } \;i=j \hbox{ and } h<k.
%  (1,0)<(1,1)<\cdots(1,k)<\cdots<(2,0)<(2,1)
%  <\cdots<\cdots<(n,0)<(n,1)<\cdots<\cdots
 $$

When the module $\VX^{n}$ is seen as a \Vmo with the natural basis given by the $X^k\,\f_j$’s, we are talking about the \gui{\coos} on this basis.
When it is seen as a  \VXmo  with the natural basis given by the~$\f_j$’s,
we are talking about the \gui{\coes} on this basis.

We start with a  list $S=[s^1,\dots,s^m]$ of vectors in
 $\VX^n$ which forms a  generating set of $M$.
We suppose \spdg that $m\geq 1$ and that the $s^k$ are nonzero.
We denote
$$
E=\V\,s^1+\cdots+\V\,s^m,\quad E_j=X^j\,E,\quad F'_k=\som_{j=0}^kE_j \quad\hbox{ and }\quad G'_k=\Sat_{\V,\VX^n}(F'_k).
$$

We can describe   $F'_k$ and $G'_k$ as the modules which are the images of the two matrices
$F_k$ and $G_k$. The matrix $F_k$ is given, it is treated with the saturation \algo
from the previous section, giving the matrix~$G_k$.

The question then arises as to  certify that from  some $k$,
there is no need to continue, because the \elts added in the basis of $G'_k$
 leave unchanged  the generated $\VX$-module (note that the \Vmo $G'_k=\Im(G_k)$ grows at each step as  $E\neq 0$).

\medskip We need to specify some  notation.
We call \gui{degree of $E$} and we denote it by $d$ the highest degree of one of the coordinates of one of the  $s^{k}$.
In the same way, $d+k$ will be the  degree of
$E_k$ or that of~$F_k$. the matrix $F_k$ can then be seen as a matrix
with $n(1+d+k)$ rows and $m(1+k)$ columns.

If $a$ is a $\V$-primitive  vector of $\VX^n$, and if $\piv(a)=(j,r)\in I$ we denote
$$
\idx(a) := j \hbox{ and } \PrimMon(a) :=r.
$$
The integer $\idx(a)$ is called the  \textsl{index of $a$}, the integer $\PrimMon(a)$
its \textsl{first residual exponent} and the couple  $\piv(a)$ is \textsl{the pivot index of  $a$}. Every couple $(j,r)\in I$ serves as an index for a vector  $X^{r}\,\f_{j}$ of the $\V$-natural basis of $\VX^n$.

We denote by $H_k$ the  matrix made by the columns that we add to  $G_{k-1}$
in order to obtain the   matrix $G_k$.

%:     Fact{factHksuffit}
\begin{fact} \label{factHksuffit}
In order to  calculate the  matrix $G_{k+1}$ from the matrix $G_k$, instead of treating the  \gtrs of $E_{k+1}$ (i.e.\ the list $X^{k+1}S$), we can simply treat the
column vectors of $XH_k$.
\end{fact}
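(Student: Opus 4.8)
The idea is to show that everything the generators $X^{k+1}S$ of $E_{k+1}$ can contribute to the saturation $G'_{k+1}$ is already captured by $XG_k$, and then to refine this to $XH_k$ by peeling off the part coming from $XG_{k-1}$ which is already present. First I would record the key invariant produced by the saturation algorithm of Section~\ref{satVmotf} together with Theorem~\ref{thAlgoSat}: at each stage $\Im(G_k)=\Sat_{\V,\VX^n}(F'_k)=G'_k$, and $G_k$ is in a strict $\k$-echelon form whose columns form a basis of $G'_k$. In particular $\Im(F'_k)\subseteq\Im(G_k)$ and $\K.\Im(F_k)=\K.\Im(G_k)$ as $\K$-subspaces of $\KX^n$.

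Next I would establish the multiplication step. Since $F'_{k+1}=F'_k+E_{k+1}=F'_k+X\cdot E_k$ and $E_k\subseteq F'_k$, we get $F'_{k+1}\subseteq F'_k + X\cdot F'_k$. Because $\Im(G_k)=\Sat(F'_k)\supseteq F'_k$ and multiplication by $X$ is $\V$-linear, $X\cdot F'_k\subseteq X\cdot\Im(G_k)=\Im(XG_k)$; also $F'_k\subseteq\Im(G_k)$. Hence $F'_{k+1}\subseteq\Im(G_k)+\Im(XG_k)=\Im([\,G_k\mid XG_k\,])$. Conversely $XG_k$ has columns in $X\cdot\Sat(F'_k)\subseteq\Sat(X\cdot F'_k)\subseteq\Sat(F'_{k+1})=G'_{k+1}$ (using that if $av\in X F'_k$ with $a\in\V^*$ then $av\in F'_{k+1}$), and of course $G_k$'s columns lie in $G'_k\subseteq G'_{k+1}$; so $\Im([\,G_k\mid XG_k\,])\subseteq G'_{k+1}$. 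Since saturation is a closure operator and $G'_{k+1}$ is already saturated, applying $\Sat$ to the sandwich $F'_{k+1}\subseteq\Im([\,G_k\mid XG_k\,])\subseteq G'_{k+1}$ gives $\Sat\big(\Im([\,G_k\mid XG_k\,])\big)=G'_{k+1}$. Therefore the saturation algorithm applied to $[\,G_k\mid XG_k\,]$ produces a matrix with image $G'_{k+1}$, and by the incremental property of Theorem~\ref{thAlgoSat} (treating the block $G_k$ first), the result has the form $[\,\widetilde G_k\mid\,?\,]$; but $G_k$ is already in strict $\k$-echelon form with image the saturated module $G'_k$, so treating it returns $G_k$ unchanged. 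Thus $G_{k+1}$ can be computed by treating $[\,G_k\mid XG_k\,]$, i.e.\ by treating, after $G_k$, the columns of $XG_k$.

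Finally I would reduce from $XG_k$ to $XH_k$. Writing $G_k=[\,G_{k-1}\mid H_k\,]$ by the definition of $H_k$, we have $XG_k=[\,XG_{k-1}\mid XH_k\,]$. By the same computation one step earlier, $G_k$ is what the algorithm produces from $[\,G_{k-1}\mid XG_{k-1}\,]$; in particular the columns of $XG_{k-1}$, when Gaussian-eliminated against $G_k$ (which already contains all of $G_{k-1}$ and $H_k$), reduce to zero, since $\Im(XG_{k-1})\subseteq\Im(G_k)$ already and the columns of $G_k$ form a $\K$-basis of $\K.\Im(G_k)$. So in the step computing $G_{k+1}$ from $G_k$ by treating the columns of $XG_k=[\,XG_{k-1}\mid XH_k\,]$, every column of $XG_{k-1}$ is eliminated to $0$ and contributes nothing new (case 2 of the procedure), whence $G_{k+1}$ is obtained from $G_k$ by treating only the columns of $XH_k$. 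This is exactly the assertion of the Fact.

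**Main obstacle.** The delicate point is the interplay between the two closure operations — passing to $\Sat$ and the echelon-form reduction — combined with the incremental property: one must check carefully that treating the already-echeloned, already-saturated block $G_k$ (resp.\ $G_{k-1}$) leaves it untouched and that the remaining columns reduce as claimed, rather than, say, forcing a re-indexing of pivots. This is where Lemma~\ref{lemSat2} (invariance of the pivot set $\piv(C)$ under the choice of echelon basis of a given module) and Lemma~\ref{lemPivG} do the real work, and it is the part deserving the most care.
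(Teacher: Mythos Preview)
Your argument is correct and the underlying idea matches the paper's: both show that the simplified procedure yields a matrix whose image is $\Sat_{\V,\VX^n}(F'_{k+1})$, hence agrees with $G_{k+1}$. The organization differs slightly. The paper argues directly by induction on $k$ that the $\K$-span of the columns of $\ov{G_k}$ (the output of the simplified procedure) equals $\K F'_k$, using only the observation that each elimination/reduction step preserves the $\K$-span; then, since $\ov{G_k}$ is echeloned and hence its image is saturated, $\Im(\ov{G_k})=\K F'_k\cap\VX^n=G'_k$. You instead go through an intermediate step, first showing via the sandwich $F'_{k+1}\subseteq\Im([\,G_k\mid XG_k\,])\subseteq G'_{k+1}$ that treating $XG_k$ suffices, and then trimming $XG_k$ down to $XH_k$ by checking that the columns of $XG_{k-1}$ already lie in $\Im(G_k)$ and therefore Gaussian-eliminate to~$0$. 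Your route is more explicit about the module-theoretic inclusions and leans on the incremental property of Theorem~\ref{thAlgoSat}; the paper's route is shorter and stays at the level of $\K$-spans. Both are fine.

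One small remark on your ``main obstacle'' paragraph: Lemma~\ref{lemPivG} is not actually needed for this Fact (and you do not use it in your proof body); it enters only in the subsequent analysis of how pivots move. The genuine work here is Lemma~\ref{lemSat2} together with the strict-echelon property, which guarantee that a column already in $\Im(G_k)$ reduces to~$0$ and that re-treating $G_k$ leaves it unchanged.
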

%--------- fin fact ----------------------------------------------
%
\begin{proof}{}
Let us consider the simplified procedure described above. \\
We denote by $\ov{G_k}$ the successive matrices obtained with this  simplified procedure.
\\
We easily check by  \recu on $k$ that the \Kev generated by  the columns
of $\ov{G_k}$ is the  sub-espace $\K F'_k$ of $\KX^{m}$.
Indeed, every column reduced to $0$ is in the  \Kev generated by he previous columns. And every  column which is not reduced to  $0$, generates, modulo the previous columns, once reduced, the same \Kev as the
column that initially formed it.
\\
The columns of $\ov{G_k}$ form a basis of a saturated
sub-\Vmo of $\VX^{m}$, which is then equal to   $\K F'_k\cap\VX^{m}$. This shows that $\ov{G_k}=G_k$
\end{proof}

In the sequel, we refer to the simplified procedure, but we denote $H_k$ and $G_k$ instead of $\ov{H_k}$ and $\ov{G_k}$.

To the matrices $H_k$ and $G_k$ we associate several integers:
\begin{itemize}
\item The integer $r_k$ is the number of columns of $G_k$, that is the rank of the free \Vmo  $\Im(G_k)$.
\item The integer $n_k$, \textsl{number of index pivots  present in $H_k$}, is the cardinality of the set of the  $i\in\intervalle{1..n}$ such that there exists a   column $C$ of $H_k$ with $\idx(C)=i$.
By Lemma~\ref{lemEchStrict}, all the index pivots present
in $H_{k-1}$ are present in $H_k$, thus, by \recu,
 $n_k$ is also the number of index pivots present in $G_k$. Therefore the sequence  $n_k$ is nondecreasing.
\item The integer $u_k$, \textsl{number of  \coos which are available for  $G_k$ in view of $n_k$}, is equal to  $n_k(1+d+k)$. If $n_{k+1}=n_k$
we have $u_{k+1}=u_k+n_k$.
\item The integer $\delta_k$, \textsl{defect of $H_k$}, is the number of  columns $C$
of $H_k$ such that there exists another column  $C'$ of $H_k$
with $\idx(C)=\idx(C')$
and $\PrimMon(C)<\PrimMon(C')$. Such a column $C$ will be called \textsl{supernumerary}.  We thus have $r_k\leq u_k$ and $r_k=r_{k-1}+n_k+\delta_k$.
\item The integer $\Delta_k=u_{k+1}-r_k$ is the \textsl{available place (or position) to occupy at step $k+1$
if $n_k=n_{k+1}$}.\\ If $n_k=n_{k+1}$, we have $\Delta_k=u_{k+1}-r_k=(u_k+n_k)-(r_{k-1}+n_k+\delta_k)=u_k-r_{k-1}-\delta_k=\Delta_{k-1}-\delta_k$.
\end{itemize}

\medskip
In order to visualize the defect $\delta_k$ of $H_k$ and  the way  it evolves
when $k$ increases we will use, after the proof, some
 figures illustrating what may happen. The reading of the proof shall be facilitated by the comments coming with the figures.

The essential point is the following.

From Lemma~\ref{lemPivG}, we are certain  that when we will treat the successive columns of $H_{k+1}=XH_k$ by means of $G_k$,
every pivot  $(j,r)$ of a column of $H_k$ will be shifted a place further,
i.e.\ in position $(j,r+1)$, as a pivot
of a column  of $H_{k+1}$, except  the case where the index
$(j,r+1)$ is already present in   $G_0$.
In this latter case, either the collision reduces to $0$ the column
of~$XH_k$ (which reduces the defect), or a new pivot  is occupied by the reduced   column 
(and made primitive). This new pivot  may have different effects.
Either it occurs on an  index which is already occupied, and does not reduce the defect, or occurs on an unoccupied  index, in which case,
the defect decreases by $1$ and the number $n_k$ increases in between   $n_k$ and $n_{k+1}$.
Thus, the first  claim of the following lemma is established.

%:     Lemma{lemFinDeLalgoCertaine}
\begin{lemma} \label{lemFinDeLalgoCertaine}
The sequence $\delta_k$ is nonincreasing. It certainly reaches  $0$ for $k$ sufficiently large.
\end{lemma}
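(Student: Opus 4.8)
The first assertion, that $(\delta_k)$ is nonincreasing, has in effect already been argued in the pivot-collision discussion preceding the statement, so my plan is to concentrate on the termination claim: $\delta_k=0$ for all large $k$. The engine of the argument is the nonnegativity of the integers $\Delta_k$ together with the recurrences recorded among the auxiliary quantities. First I would note that $(n_k)$ is a nondecreasing sequence of integers lying in $\intervalle{1..n}$ --- nondecreasing by Lemma~\ref{lemEchStrict}, and $n_0\ge 1$ since $m\ge 1$ and the $s^k$ are nonzero. A bounded nondecreasing integer sequence is eventually constant, so I would fix $k_0$ with $n_k=n_{k+1}$ for every $k\ge k_0$.

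For such $k$ the hypothesis ``$n_k=n_{k+1}$'' needed to validate the relation $\Delta_k=\Delta_{k-1}-\delta_k$ holds, so iterating it from $k_0$ yields
\[
\Delta_k=\Delta_{k_0-1}-\sum_{j=k_0}^{k}\delta_j\qquad(k\ge k_0).
\]
Then I would check $\Delta_k\ge 0$: by definition $\Delta_k=u_{k+1}-r_k$, and $r_k\le u_k\le u_{k+1}$, the first inequality being the bound $r_k\le u_k$ coming from the construction and the second holding because $u_{k+1}=n_{k+1}(1+d+k+1)\ge n_k(1+d+k)=u_k$. Hence $\sum_{j=k_0}^{k}\delta_j\le\Delta_{k_0-1}$ for every $k\ge k_0$.

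Now the $\delta_j$ are nonnegative integers whose partial sums are bounded by the fixed integer $\Delta_{k_0-1}$, so only finitely many of them are nonzero; combined with the already established monotonicity of $(\delta_k)$ this forces $\delta_k=0$ for $k$ large. (Equivalently: the nonincreasing sequence $(\delta_k)$ of nonnegative integers has a limit $c\ge 0$, and $c\ge 1$ would make $\sum_j\delta_j$ diverge, contradicting the bound, so $c=0$.) I expect the only delicate point to be the bookkeeping --- making sure the recurrence $\Delta_k=\Delta_{k-1}-\delta_k$ is genuinely available for all $k\ge k_0$, which is exactly why one must first pass to the stabilization index $k_0$ of $(n_k)$, and checking that it is the construction inequality $r_k\le u_k$ that supplies $\Delta_k\ge 0$. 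The substantive part --- the collision analysis showing why pivots merely shift and why a collision can only decrease the defect --- has already been carried out before the lemma, so nothing deep remains.
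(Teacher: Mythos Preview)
Your proposal is correct and follows essentially the same approach as the paper: both rely on the recurrence $\Delta_k=\Delta_{k-1}-\delta_k$ (valid once $n_k=n_{k+1}$), the nonnegativity of $\Delta_k$, and the fact that the bounded nondecreasing sequence $(n_k)$ stabilizes. Your write-up is actually more explicit than the paper's, in particular in spelling out why $\Delta_k\ge 0$ via $r_k\le u_k\le u_{k+1}$ and in passing first to the stabilization index $k_0$ before telescoping.
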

%--------- fin lemma ----------------------------------------------
%
\begin{proof}{}
We have already pointed out that if $n_{k+1}=n_k$ and $\delta_k>0$ then $\Delta_{k+1}<\Delta_k$. For a sufficiently large $k$ we thus get  $\delta_k=0$
or $n_{k+1}>n_k$. In the second case, we reproduce the previous  situation.
Since the sequence $n_k$ is bounded by $n$, this may happen only finitely many times.
\end{proof}
%

%:     Lemma{lemFinDeLalgoCorrect}
\begin{lemma} \label{lemFinDeLalgoCorrect}
If $\delta_k=0$ then the $\VX$-module generated by $G_k$ is the $\V$-saturation of the
$\VX$-module generated by the $s^j$’s given at the beginning. We can then stop the \algo.
\end{lemma}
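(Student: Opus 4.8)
The content of Lemma~\ref{lemFinDeLalgoCorrect} is: once $\delta_k=0$, adding further rows ($XH_k$, $X^2H_k,\dots$) produces nothing new at the level of the generated $\VX$-module, so $G_k$ already generates the full $\V$-saturation. I would prove this by showing that the whole column space stabilises \emph{as a $\VX$-module} after step $k$, i.e.\ that $\Im(G_{k+j})$ as a $\VX$-module equals $\Im(G_k)$ as a $\VX$-module for all $j\ge 0$, and then invoking that the ascending union $\bigcup_j G'_{k+j}$ is precisely $\Sat_{\V,\VX^n}(M)$.

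\textbf{Step 1: reduce to the passage $k\to k+1$.} By Fact~\ref{factHksuffit}, to get $G_{k+1}$ from $G_k$ we only treat the columns of $XH_k$. By Lemma~\ref{lemFinDeLalgoCertaine} (monotonicity of $\delta_k$), if $\delta_k=0$ then $\delta_{k+1}=0$, so it suffices to analyse one step and then iterate. Moreover I would note that $H_k$ itself consists (after step $k$) only of columns $C^1,\dots,C^{n_k}$ whose indices $\idx(C^i)$ are \emph{pairwise distinct} — this is exactly the statement $\delta_k=0$, since a supernumerary column is one sharing an index with another column of $H_k$. So $H_k$ has one column per index actually occurring in it, and $n_k$ of them.

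\textbf{Step 2: each column of $H_k$ generates a free rank-$1$ $\VX$-summand.} Take a column $C$ of $H_k$ with pivot $(j,r)$, so $\idx(C)=j$, $\PrimMon(C)=r$, and the pivot coefficient $\cq(C)$ is a unit of $\V$. Consider the columns $C, XC, X^2C,\dots$ of $XH_k, X^2H_k,\dots$. By Lemma~\ref{lemPivG} applied repeatedly: when $X^tC$ is reduced modulo $G_{k-1+t}$ (which, since $\delta$ stays $0$, contains exactly the shifted pivots $(j',r'+t)$ of the columns of $H_k$ together with the original pivots of $G_0$ — this is where the figures are meant to help), the pivot of $X^tC$ lands at $(j,r+t)$, an index never shared with another column, so $X^tC$ is reduced to a primitive vector with the \emph{same} pivot index and is added. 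Hence $C$ contributes, across all future steps, exactly the family $\{C,XC,X^2C,\dots\}$ of $\V$-generators — up to the $\RedPrim$ normalisation which only multiplies by units — so the $\VX$-module it generates in the limit is $\VX\cdot C$, and this is already $\VX$-generated by the single element $C\in\Im(G_k)$.

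\textbf{Step 3: assemble and conclude.} Summing over the columns: the $\VX$-module $\sum_{j\ge 0}\Im(G_{k+j})$ equals $\Im(G_k)+\sum_{C\text{ col.\ of }H_k}\VX\cdot C = \Im(G_k)$ as a $\VX$-module, since every column of $H_k$ is by definition already a column of $G_k$. On the other hand, $\bigcup_{j\ge 0}\Im(G_{k+j}) = \bigcup_{j\ge 0}G'_{k+j} = \bigcup_{\ell\ge 0}\Sat_{\V,\VX^n}(F'_\ell)$, and since $\sum_\ell F'_\ell = \sum_\ell X^\ell E = \VX\cdot E = M$ and saturation commutes with the filtered union $\bigcup_\ell F'_\ell$ (an element of $\Sat_{\V,\VX^n}(M)$ lies in $\K.M\cap\VX^n$, hence in $\K.F'_\ell\cap\VX^n = G'_\ell$ for $\ell$ large, as it has bounded degree), this union is exactly $\Sat_{\V,\VX^n}(M)$. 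Combining, $\Im(G_k)$ already generates $\Sat_{\V,\VX^n}(M)$ as a $\VX$-module, which also justifies stopping the algorithm.

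\textbf{Main obstacle.} The delicate point is Step~2: controlling, for \emph{all} future steps simultaneously, that no new pivot ever appears on a \emph{fresh} index and no column ever reduces to $0$ once $\delta_k=0$. One must verify that the hypothesis of Lemma~\ref{lemPivG} — the pivot index $(j,r+t)$ of $X^tC$ is distinct from the pivot indices currently in the working matrix — genuinely persists; the only danger is a ``collision'' with a pivot already present in $G_0$, but that collision would, as explained in the text preceding Lemma~\ref{lemFinDeLalgoCertaine}, either kill the column (decreasing $\delta$, impossible since $\delta=0$) or move the defect, again impossible. Making this induction airtight — essentially, that $\delta_k=0$ is a genuine fixed point of the whole shifting dynamics, not merely non-increasing — is the crux, and it is exactly what the accompanying figures are designed to make visible.
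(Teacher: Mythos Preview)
Your approach is essentially the same as the paper's, just considerably more detailed. The paper's proof is two sentences: since $\delta_{k'}=0$ for all $k'\geq k$, it suffices to show (for each such $k'$) that the columns of $H_{k'+1}$ lie in the $\VX$-module generated by $G_{k'}$; and by Lemma~\ref{lemPivG} the columns of $H_{k'+1}$ lie in the $\V$-module $\Im(G_{k'})+X\Im(H_{k'})$, which is contained in that $\VX$-module since $H_{k'}\subseteq G_{k'}$. Your Steps~1 and~3 match this exactly, and your ``Main obstacle'' correctly isolates the one point the paper leaves implicit (that the hypothesis of Lemma~\ref{lemPivG} is actually satisfied).

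Two small things to clean up. First, your Step~2 works harder than necessary: rather than tracking $X^tC$ for all $t\geq 1$ simultaneously, it is cleaner (and exactly what the paper does) to handle only the passage $k\to k+1$ and then iterate, using that $\delta_{k+1}=0$ again. Second, your parenthetical description of $G_{k-1+t}$ as ``the shifted pivots of $H_k$ together with the original pivots of $G_0$'' is not accurate: $G_{k+t-1}$ contains all of $G_k=G_0\cup H_1\cup\cdots\cup H_k$, not just $G_0$ and the $H_k$-shifts. This does not break the argument, but it makes the no-collision check look easier than it is; the danger is a collision with any pivot in $\piv(G_k)$, not only those in $\piv(G_0)$.
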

%--------- fin lemma ----------------------------------------------
%
\begin{proof}{}
Since the sequence $\delta_k$ is now null, it suffices to prove that
the columns of $H_{k+1}$ are in the $\VX$-module generated by the 
columns of $G_k$. But, by virtue of  Lemma~\ref{lemPivG}, the columns of~$H_{k+1}$
are in the  \Vmo $\Im(G_k)+X\Im(H_k)$.
\end{proof}

\medskip \noindent {\bf An example with figures.}

\smallskip
The figure 1 represents the index pivots of $G_0=H_0$.
The six white circles are the \elts of $\piv(H_0)$.

The black (full) circles or squares correspond to 
  \elts of the $\V$-basis where no  index pivot of $G_0$ is present.
The black squares are put for the indices of pivots which are still unoccupied: when a whole row is black we put squares to  emphasize further.
\\
In the present case we have then  $n=5$, $d=4$, $n_0=4$, $r_0=6$, $u_0=20$,
$\Delta_0=14$, $\delta_0=2$.

We represent by double white circles the \elts $\piv(H_0)$ of supernumerary columns,  corresponding to $\delta_0=2$.

%: figure  {fig1}
\begin{figure}[ht]
\begin{center}
\includegraphics*[width=9cm]{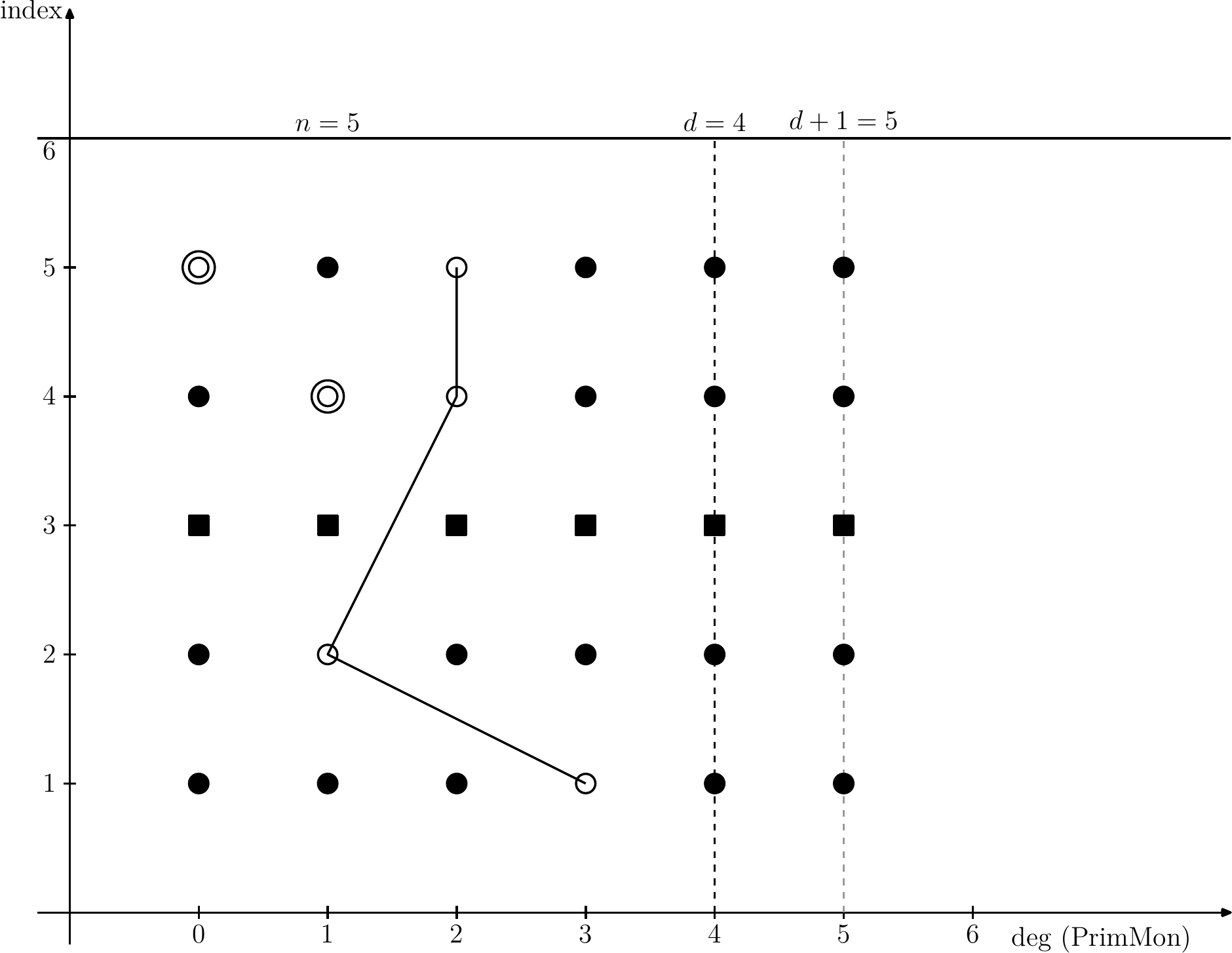}

\caption[figure 1]
{\label{fig1} }

\end{center}
\end{figure}

The broken black line joins the
$\piv(C)$’s of nonsupernumerary columns of $H_0$. \\
By Lemma~\ref{lemPivG}, we are certain that when we will treat the successive  columns of $H_1=XH_0$ by means of $G_0$,
every pivot  $(j,r)$ of a column of $H_0$ will be shifted a place further
i.e.\ in position $(j,r+1)$ as a  pivot of a 
column  of $H_1$, except the case where the index
$(j,r+1)$  is already present in  $G_0$.

In the case of figure 1, all the pivots of $H_0$ except the pivot $(4,1)$ are shifted a place further in~$H_1$. In particular the pivot $(5,1)$ will appear in $H_1$,
and we see that this will be for a supernumerary column (thus $\delta_1\geq 1$).
\\
When we will treat the 
column $XC$ such that $\piv(C)=(4,1)$, a \textsl{collision} occurs:
a Gaussian elimination will be performed in order to reduce to  $0$ the \coe in position $(4,2)$ of $XC$
and the  saturation procedure will produce, either a null vector
(in which case  $\delta_1=1$ and $H_1$ will have only $5$ columns, $r_1=11$), or a vector $C''$ such that $\piv(C'')$ occupies an unoccupied position in the  a priori available 
space (vectors of degree $\leq 5$)
with \ncrt  $\piv(C'')\notin \piv(G_0)$.
In this case we will have $r_1=12$.

%\newpage

Let us examine three  possibilities for this $\piv(C'')$, and give the corresponding three figures for~$H_1$.
The pivots of $H_0$ will be grey (empty) circles  and those of $H_1$ black (empty) circles.
The black (full) circles or squares correspond again to empty positions which could  a priori
be filled in the next steps.

In the case of  figure 2 we have  $n_1=4$, $\delta_1=2$, $u_1=24$,
$\Delta_1=12$.
When we will treat  $XH_1$ by means of $G_1$,
 pivots at positions $(1,5)$, $(2,4)$, $(4,4)$ and $(5,4)$
will be produced in  $H_2$. And two collisions, respectively at $(2,1)$ and $(5,2)$, will give  more difficult 
to predict results.
We can have $r_2=16$ with $\delta_2=0$, or  $r_2=17$ (with $\delta_2=1$ if $n_2=4$, or $\delta_2=0$ if $n_2=5$),
or also  $r_2=18$.

%: figure  {fig2}
\begin{figure}[ht]
\begin{center}
\includegraphics*[width=9cm]{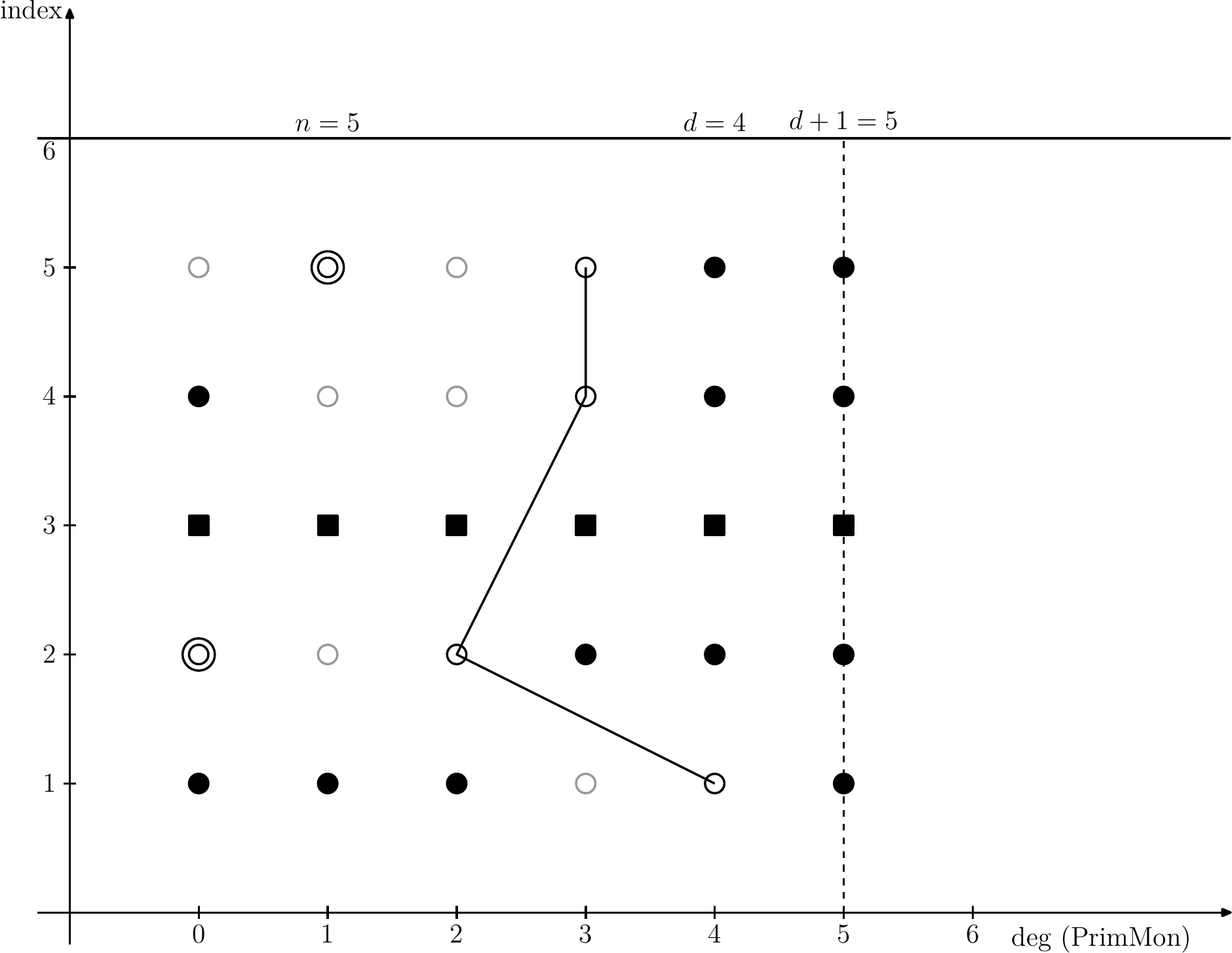}

\caption[figure 2]
{\label{fig2} If the collision at $(4,2)$ produces a pivot at position $(2,0)$ }

\end{center}
\end{figure}
	
%: figure  {fig3}
\begin{figure}[ht]
\begin{center}
\includegraphics*[width=9cm]{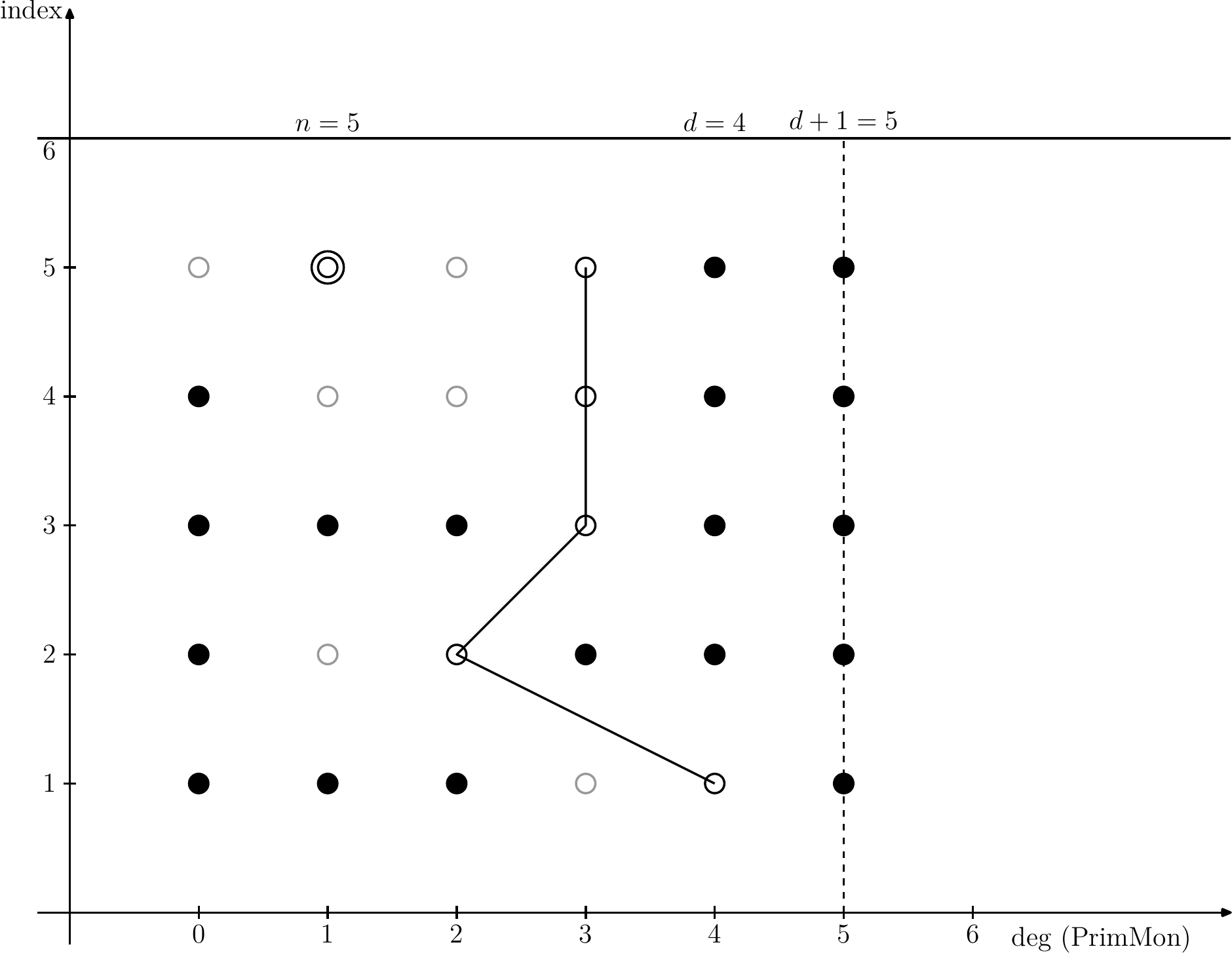}

\caption[figure 3]
{\label{fig3} If the collision at $(4,2)$ produces a pivot at $(3,3)$}

\end{center}
\end{figure}

%\vspace{-2mm}

In the case of figure 3  we have  $n_1=5$, $\delta_1=1$, $u_1=30$, $\Delta_1=18$.
When we will treat $XH_1$ by means of   $G_1$,
 pivots at positions $(1,5)$, $(2,3)$, $(3,4)$, $(4,4)$ and $(5,4)$
will be produced in $H_2$. And a collision, at $(5,2)$, will give a more difficult to predict result.
This could reduce the vector to $0$, in which case $\delta_2=0$, or produce a new  
vector, in which case $\delta_2=1$, because now all the  indices are occupied by the pivots.  The new vector will have a priori as pivot any of the black squares indicated in the figure, or also a
 pivot with degree $6$.

%: figure  {fig4}
\begin{figure}[ht]
\begin{center}
\includegraphics*[width=9cm]{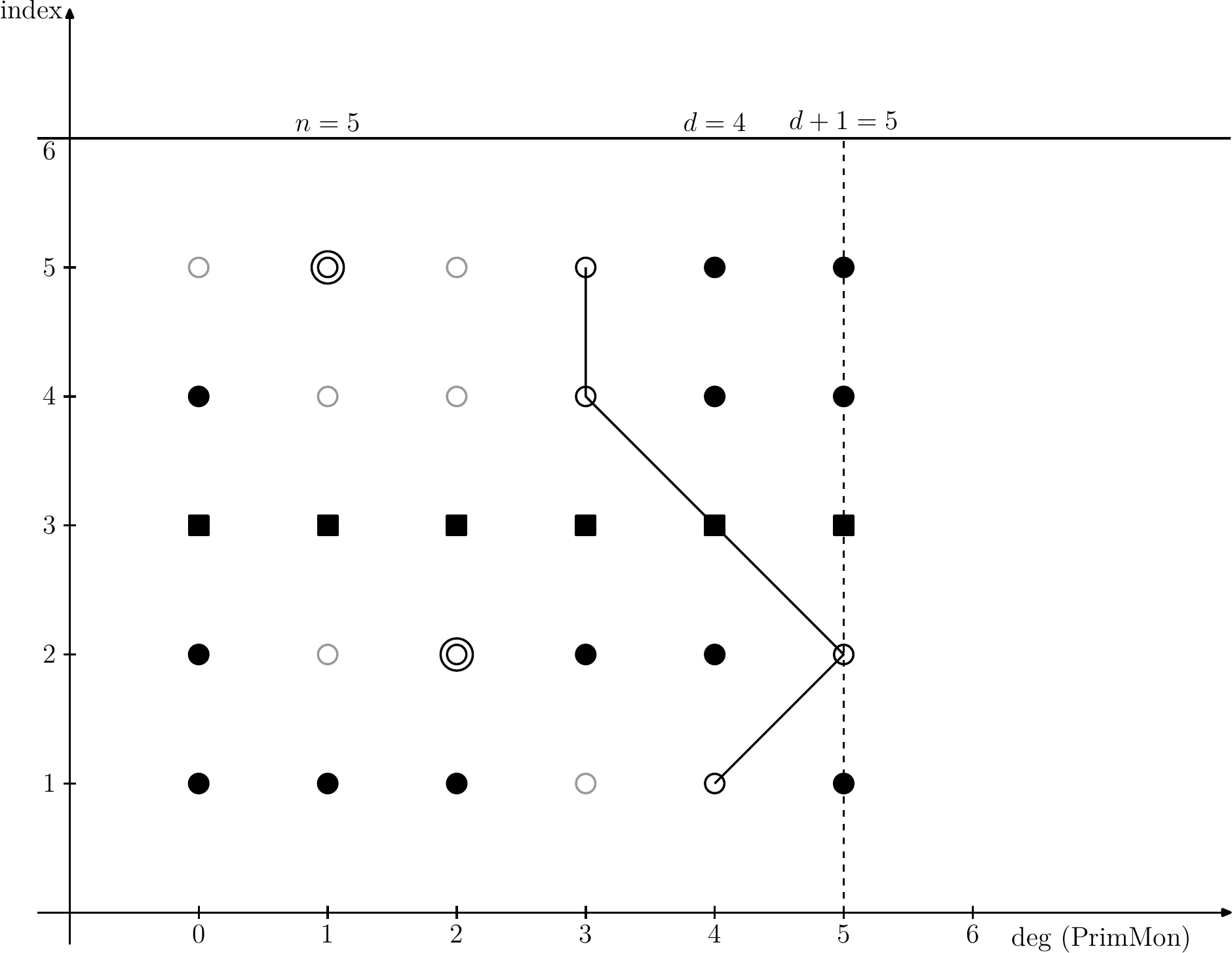}

\caption[figure 4]
{\label{fig4} If the collision at $(4,2)$ produces a pivot at $(2,5)$}

\end{center}
\end{figure}

%\vspace{-2mm}

In the case of figure 4 we have  $n_1=4$ and $\delta_1=2$.
When we will treat  $XH_1$ by means of $G_1$,
 pivots at positions $(1,5)$, $(2,4)$, $(4,4)$ and $(5,4)$
will be  produced in $H_2$. And a collision, at $(5,2)$, will give a more difficult to predict result.
The supernumerary column with pivot $(2,2)$ will not  a priori produce any collision,
except when the certain  collision, previously mentioned, is treated before and gives a reduced vector with pivot $(2,3)$.

%\hum {Ce cernier morceau de phrase: sauf dans le cas ...
%montre qu'il peut se passer des choses un peu subtiles, qui n'ont pas vraiment été annoncées au départ lorsqu'on a dit que tous les pivots de $XH_0$ non présents dans $G_0$ se retrouvaient à coup s\^ur dans $H_1$.}

%%%%%%%%%%%%%%%%%%%%%%%%%%%%%%%%%%%%%%%%%%%%%%%%%%%%%%%%
\section{Syzygy module for a finitely generated  $\VX$-module} \label{VXsysygies}

%:     Theorem{thsyzygies}
\begin{theorem} \label{thsyzygies}
We keep the context \ref{context1}.
Let $u_1,\dots,u_n\in\VX^k$ and $s^1,\dots,s^m\in\KX^n$ be \gtrs of the syzygy
module of $(u_1,\dots,u_n)$ over $\KX$. We can suppose that
the $s^j$’s are in $\VX^n$. Then the syzygy  module of $(u_1,\dots,u_n)$ over $\VX$ is equal to the $\V$-saturation  in $\VX^n$ of the $\VX$-module generated by
$s^1,\dots,s^m$. As a consequence, by virtue of  \tho \ref{thSat}, this
 module is \tf.

\noindent In particular $\VX$ is a \coh ring.
\end{theorem}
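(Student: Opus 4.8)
The plan is to recognise the syzygy module of $(u_1,\dots,u_n)$ over $\VX$ as the $\V$-saturation of a \tf $\VX$-module, and then to invoke \thref{thSat}.

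First I would dispose of the harmless normalisation. For each $j$ we multiply $s^j$ by a common denominator of its coefficients, an element of $\V\setminus\{0\}$, to bring it into $\VX^n$; since that scalar is a unit of $\KX$, the new family still generates the syzygy module $\cS_\K$ of $(u_1,\dots,u_n)$ over $\KX$. So from now on $s^1,\dots,s^m\in\VX^n$. Put $N=\VX\,s^1+\cdots+\VX\,s^m\subseteq\VX^n$ and let $\cS\subseteq\VX^n$ be the syzygy module of $(u_1,\dots,u_n)$ over $\VX$.

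The heart of the proof is the identity $\cS=\Sat_{\V,\VX^n}(N)$, which I would establish by two inclusions. For $\Sat_{\V,\VX^n}(N)\subseteq\cS$: each $s^j$ is a syzygy over $\VX$ (it is one over $\KX$ and has its coordinates in $\VX$, and $\VX\subseteq\KX$), so $N\subseteq\cS$; and $\cS$ is $\V$-saturated in $\VX^n$, because if $a\in\V\setminus\{0\}$, $v\in\VX^n$ and $av\in\cS$, then $a\sum_iv_iu_i=0$ in $\VX^k$, and $\VX$ is a domain ($\V$ being one), so $\sum_iv_iu_i=0$, i.e.\ $v\in\cS$; hence $\Sat_{\V,\VX^n}(N)\subseteq\Sat_{\V,\VX^n}(\cS)=\cS$. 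For the reverse inclusion: given $v\in\cS$, it is a syzygy over $\KX$, hence $v=\sum_j\lambda_js^j$ with $\lambda_j\in\KX$; choosing $a\in\V\setminus\{0\}$ a common denominator for all coefficients of all the $\lambda_j$ yields $a\lambda_j\in\VX$ and $av=\sum_j(a\lambda_j)s^j\in N$, so $v\in\Sat_{\V,\VX^n}(N)$.

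Then \thref{thSat} gives that $\Sat_{\V,\VX^n}(N)$ is \tf over $\VX$, hence so is $\cS$, and an explicit finite generating set is obtained by running the saturation procedure of Section~\ref{satVXmotf} on $s^1,\dots,s^m$. For the coherence of $\VX$ I would take $k=1$: for $u_1,\dots,u_n\in\VX$, context~\ref{context1} makes $\K$ a discrete field, so $\KX=\K[X]$ is a \noe \coh ring (a principal ideal domain) and a finite generating set of the syzygy module of $(u_1,\dots,u_n)$ over $\KX$ can be computed, e.g.\ by Gröbner bases over $\K[X]$; clearing denominators and applying the identity above shows that the $\VX$-syzygy module is \tf, and this for every finite family, i.e.\ $\VX$ is \coh. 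I do not expect a serious obstacle here: the real work is \thref{thSat} and the algorithm preceding it, and the only point requiring a little care is the reverse inclusion — turning a $\KX$-linear relation into a $\VX$-linear one costs just one scalar from $\V\setminus\{0\}$, which is exactly what membership in the $\V$-saturation (rather than a $\VX$-saturation) demands, and this is available because every finite subset of $\K=\Frac(\V)$ has a common denominator in $\V$.
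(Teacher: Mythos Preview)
Your argument is correct and follows essentially the same route as the paper: both prove the identity $\cS=\Sat_{\V,\VX^n}(N)$ by writing any $\VX$-syzygy as a $\KX$-combination of the $s^j$ and clearing denominators by a single $\alpha\in\V\setminus\{0\}$, then invoke Theorem~\ref{thSat}. You spell out the ``converse is straightforward'' direction (that $\cS$ is $\V$-saturated because $\VX$ is a domain) more explicitly than the paper does, but the substance is the same.
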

%--------- fin theorem ----------------------------------------------

%
\begin{proof}
A vector $f=(f_1,\dots,f_n)\in\VX^n$ such that $\sum_jf_ju_j=0$
can be written in the form
  $$
  f=a_1s^1+\cdots+a_ms^m
  $$
with some $a_j\in\KX$.
Multiplying this relation by a suitable $\alpha\neq 0$ in $\V$,
we obtain that~$\alpha\, a_i\in\VX$. This shows that $\alpha\,f$ is in the $\VX$-module generated by the $s^j$. And thus $f$ is in the 
$\V$-saturation of the $\VX$-module generated by the $s^j$.
The converse is straightforward.
\end{proof}

This leads to  the following \algo that computes a finite  generating set for the syzygy module of $u_1,\dots,u_n\in\VX^k$ over $\VX$:
\begin{enumerate}
\item Calculate vectors  $v^1,\dots,v^m\in\KX^n$ which form a finite generating set for the syzygy module of $(u_1,\dots,u_n)$ over $\KX$.
\item By multiplying each $v^j$ by a suitable $\alpha_j\in\K$, replace it with an $s^j\in\VX^n$ primitive.
\item Calculate by means of the \algo given in section \ref{satVXmotf} a finite generating set for 
the $\V$-saturation of the $\VX$-module  $\gen{s^1,\dots,s^m}$ dans $\VX^n$.
\end{enumerate}

%%%%%%%%%%%%%%%%%%%%%%%%%%%%%%%%%%%%%%%%%%%%%%%%%%%%%%%%%%%%%%%%%%%%%%%%%%%

%%%%%%%%%%%%%%%%%%%%%%%%%%%%%%%%%%%%%%%%%%%%%%%%%%%%%%%%%%%%%%%%%%%%%%%%%%%
\section{Annex:  Magma codes} \label{code}

We present in this annex some  magma codes for computing  the
 $\V$-saturation of a \tf $\VX$-sub-module of $\VX^{n}$
 (if $\V$ is a residually discrete \ddv) following the method described in the present article.

\smallskip Comments are  either on  a single line, preceded by {\tt //},
or on several lines, in between the signs {\tt /*} and {\tt */}.
\newpage
{\small \verbatiminput {EchelonStrict.magma}
}

%:biblio

\markboth{References}{References}

\addcontentsline{toc}{section}{References}
\small
\bibliographystyle{plainnat}
%\bibliography{Coherencebib}

\normalsize

\endgroup
\stopcontents[english]

%%%%%%%%%%%%%%%%%%%%%%%%%%%%%%%%%%%%%%%%%%%%%%%%%%%%%%%%
%%%%%%%%%%%%%%%%%%%%%%%%%%%%%%%%%%%%%%%%%%%%%%%%

\clearpage
\newpage
\thispagestyle{empty}
~
\clearpage
\newpage

\setcounter{page}{1}\renewcommand\thepage{F\arabic{page}}\renewcommand\theHsection{F\arabic{section}}
%!TEX encoding =  UTF-8 Unicode
%!TEX root =  mainVX.tex 

\clearpage

\setcounter{section}{0}

\selectlanguage{french}
\def\frenchproofname{\textsl{Démonstration}}
\FrenchFootnotes

\startcontents[french]

%: titre
\title{Un algorithme pour le calcul des syzygies sur  $\mathbf{V}[X]$\\
dans le cas où $\mathbf{V}$ est un domaine de valuation}
\author{
Henri Lombardi, 
Claude Quitté, Ihsen Yengui}

\date{2014}

\def\thefootnote{\arabic{footnote}}

\newcommand\tsf[1]{\textbf{\textsf{#1}}}
\newcommand\Vrai{\mathsf{Vrai}}
\newcommand\Faux{\mathsf{Faux}}
\newcommand\ET{\mathsf{ et }}
\newcommand\OU{\mathsf{ ou }}
\newcommand\sialors[1]{\tsf{si } $#1$ \tsf{ alors }}
\newcommand\tantque[1]{\tsf{tant que } $#1$ \tsf{ faire }}
\newcommand\finpour{\tsf{fin pour}}
\newcommand\sinon{\tsf{sinon }}
\newcommand\finsi{\tsf{fin si }}
\newcommand\fintantque{\tsf{fin tant que }}
\newcommand\Debut{\\[1mm] \tsf{D\'ebut }}
\newcommand\Fin{\tsf{\\ Fin.}}
\newcommand\fin{\tsf{\\ Fin.}}
\newcommand\Entree{\\[1mm] \tsf{Entr\'ee : }}
\newcommand\Sortie{\\ \tsf{Sortie : }}
\newcommand\Varloc{\\ \tsf{Variables locales : }}
%\newcommand\Repeter{\tsf{R\'ep\'eter }}
%\newcommand\jusqua{\tsf{jusqu\`a ce que }}

%%%%%%%%%%%%%%%%%%%%%%%%%%%%%%%%%%%%%%%%%

%:  lecteur, lectrice, masculin feminin

\newcounter{MF}
\newcommand\stMF{\stepcounter{MF}}

\newcommand{\lec}{\stMF\ifodd\value{MF}lecteur \else 
lectrice \fi}
\newcommand{\lecz}{\stMF\ifodd\value{MF}lecteur\else lectrice\fi}

\newcommand{\lecs}{\stMF\ifodd\value{MF}lecteurs \else 
lectrices \fi}
\newcommand{\lecsz}{\stMF\ifodd\value{MF}lecteurs\else 
lectrices\fi}

\newcommand{\alec}{\stMF\ifodd\value{MF}au lecteur \else%
à la lectrice \fi}
\newcommand{\alecz}{\stMF\ifodd\value{MF}au lecteur\else%
à la lectrice\fi}

\newcommand{\dlec}{\stMF\ifodd\value{MF}du lecteur \else%
de la lectrice \fi}
\newcommand{\dlecz}{\stMF\ifodd\value{MF}du lecteur\else%
de la lectrice\fi}

\newcommand{\llec}{\stMF\ifodd\value{MF}le lecteur \else la lectrice \fi}
\newcommand{\llecz}{\stMF\ifodd\value{MF}le lecteur\else la lectrice\fi}

\newcommand{\Llec}{\stMF\ifodd\value{MF}Le lecteur \else La lectrice \fi}

% les suivants ne changent pas le genre
\newcommand{\lui}{\ifodd\value{MF}lui \else
elle \fi}
\newcommand{\luiz}{\ifodd\value{MF}lui\else
elle\fi}

\newcommand{\celui}{\ifodd\value{MF}celui \else
celle \fi}

\newcommand{\ceux}{\ifodd\value{MF}ceux \else
celles \fi}

\newcommand{\er}{\ifodd\value{MF}er \else
ère \fi}

\newcommand{\eux}{\ifodd\value{MF}eux \else
elles \fi}

\newcommand{\eUx}{\ifodd\value{MF}eux \else
euse \fi}

\newcommand{\leux}{\ifodd\value{MF}leux \else
leuse \fi}

\newcommand{\il}{\ifodd\value{MF}il \else
elle \fi}

\newcommand{\ien}{\ifodd\value{MF}ien \else
ienne \fi}

\newcommand{\ez}{\ifodd\value{MF}\else e\fi}

\newcommand{\n}{\ifodd\value{MF}n \else nne \fi}
\newcommand{\nz}{\ifodd\value{MF}n\else nne\fi}

%  \la* fait le ou la sans changer le genre,
%  \la fait le ou la en changeant de genre
\makeatletter
\newcommand{\la}{\@ifstar{\ifodd\value{MF}le\else
la\fi}{\stMF\ifodd\value{MF}le \else la \fi}}
\makeatother

\newcommand \rem{\rdb
\noi{\sl Remarque. }}

\newcommand \REM[1]{\rdb
\noi{\sl Remarque#1. }}

\newcommand \rems{\rdb
\noi{\sl Remarques. }}

\newcommand \exl{\rdb
\noi{\bf Exemple. }}

\newcommand \EXL[1]{\rdb
\noi{\bf Exemple: #1. }}

\newcommand \exls{\rdb
\noi{\bf Exemples. }}

\newcommand \thref[1] {théorème~\ref{#1}}
\newcommand \paref[1] {page~\pageref{#1}}
\newcommand \pstfref[1] {Posi\-tiv\-stel\-lensatz formel~\ref{#1}}
\newcommand \pstref[1] {Posi\-tiv\-stel\-lensatz~\ref{#1}}

\newcommand\oge{\leavevmode\raise.3ex\hbox{$\scriptscriptstyle\langle\!\langle\,$}}
\newcommand\feg{\leavevmode\raise.3ex\hbox{$\scriptscriptstyle\,\rangle\!\rangle$}}
\newcommand\gui[1]{\oge{#1}\feg}

\newcommand \facile{\begin{proof}
La démonstration est laissée \alecz.
\end{proof}
}

%:  Commentaires, remarques, exemples, problemes
\newcommand\comm{\rdb
\noi{\it Commentaire. }}

\newcommand\COM[1]{\rdb
\noi{\it Commentaire #1. }}

\newcommand\comms{\rdb
\noi{\it Commentaires. }}

\newcommand\Pb{\rdb
\noi{\bf Problème. }}

\newcommand\eoq{\hbox{}\nobreak
\vrule width 1.4mm height 1.4mm depth 0mm}

%:   cad hdr spdg propeq ...
\newcommand \Cad {C'est-à-dire\xspace}
\newcommand \recu {récur\-rence\xspace}
\newcommand \hdr {hypo\-thèse de \recu}
\newcommand \cad {c'est-à-dire\xspace}
\newcommand \cade {c'est-à-dire en\-co\-re\xspace}
\newcommand \ssi {si, et seu\-lement si, }
\newcommand \ssiz {si, et seu\-lement si,~}
\newcommand \cnes {con\-di\-tion néces\-sai\-re et suf\-fi\-san\-te\xspace}
\newcommand \spdg {sans per\-te de géné\-ra\-lité\xspace}
\newcommand \Spdg {Sans per\-te de géné\-ra\-lité\xspace}

\newcommand \Propeq {Les pro\-pri\-é\-tés sui\-van\-tes sont 
équi\-va\-len\-tes.}
\newcommand \propeq {les pro\-pri\-é\-tés sui\-van\-tes sont 
équi\-va\-len\-tes.}

%:  Kev Alg etc...
\newcommand \Kev {$\gK$-\evc}
\newcommand \Kevs {$\gK$-\evcs}

\newcommand \Lev {$\gL$-\evc}
\newcommand \Levs {$\gL$-\evcs}

\newcommand \Qev {$\QQ$-\evc}
\newcommand \Qevs {$\QQ$-\evcs}

\newcommand \kev {$\gk$-\evc}
\newcommand \kevs {$\gk$-\evcs}

\newcommand \lev {$\gl$-\evc}
\newcommand \levs {$\gl$-\evcs}

\newcommand \Alg {$\gA$-\alg}
\newcommand \Algs {$\gA$-\algs}

\newcommand \Blg {$\gB$-\alg}
\newcommand \Blgs {$\gB$-\algs}

\newcommand \Clg {$\gC$-\alg}
\newcommand \Clgs {$\gC$-\algs}

\newcommand \klg {$\gk$-\alg}
\newcommand \klgs {$\gk$-\algs}

\newcommand \llg {$\gl$-\alg}
\newcommand \llgs {$\gl$-\algs}

\newcommand \Klg {$\gK$-\alg}
\newcommand \Klgs {$\gK$-\algs}

\newcommand \Llg {$\gL$-\alg}
\newcommand \Llgs {$\gL$-\algs}

\newcommand \QQlg {$\QQ$-\alg}
\newcommand \QQlgs {$\QQ$-\algs}

\newcommand \Rlg {$\gR$-\alg}
\newcommand \Rlgs {$\gR$-\algs}

\newcommand \RRlg {$\RR$-\alg}
\newcommand \RRlgs {$\RR$-\algs}

\newcommand \ZZlg {$\ZZ$-\alg}
\newcommand \ZZlgs {$\ZZ$-\algs}

\newcommand \Amo {$\gA$-module\xspace}
\newcommand \Amos {$\gA$-modules\xspace}

\newcommand \Bmo {$\gB$-module\xspace}
\newcommand \Bmos {$\gB$-modules\xspace}

\newcommand \Cmo {$\gC$-module\xspace}
\newcommand \Cmos {$\gC$-modules\xspace}

\newcommand \kmo {$\gk$-module\xspace}
\newcommand \kmos {$\gk$-modules\xspace}

\newcommand \Kmo {$\gK$-module\xspace}
\newcommand \Kmos {$\gK$-modules\xspace}

\newcommand \Lmo {$\gL$-module\xspace}
\newcommand \Lmos {$\gL$-modules\xspace}

\newcommand \Rmo {$\gR$-module\xspace}
\newcommand \Rmos {$\gR$-modules\xspace}

\newcommand \Vmo  {$\gV$-module\xspace}
\newcommand \Vmos {$\gV$-modules\xspace}

\newcommand \VXmo  {$\VX$-module\xspace}

\newcommand \Ali {appli\-ca\-tion $\gA$-\lin}
\newcommand \Alis {appli\-ca\-tions $\gA$-\lins}

\newcommand \Kli {appli\-ca\-tion $\gK$-\lin}
\newcommand \Klis {appli\-ca\-tions $\gK$-\lins}

\newcommand \Bli {appli\-ca\-tion $\gB$-\lin}
\newcommand \Blis {appli\-ca\-tions $\gB$-\lins}

\newcommand \Cli {appli\-ca\-tion $\gC$-\lin}
\newcommand \Clis {appli\-ca\-tions $\gC$-\lins}

%:  a
\newcommand \ac{algé\-bri\-quement clos\xspace}  

\newcommand \acl {an\-neau \icl}
\newcommand \acls {an\-neaux \icl}

\newcommand \adp {an\-neau de Pr\"u\-fer\xspace}
\newcommand \adps {an\-neaux de Pr\"u\-fer\xspace}

\newcommand \adpc {\adp \coh}
\newcommand \adpcs {\adps \cohs}

\newcommand \adu {\alg de décom\-po\-sition univer\-selle\xspace}
\newcommand \adus {\algs de décom\-po\-sition univer\-selle\xspace}

\newcommand \adv {anneau de valuation\xspace}
\newcommand \advs {anneaux de valuation\xspace}

\newcommand \advl {anneau \dvla} 
\newcommand \advls {anneaux \dvlas} 

\newcommand \Afr {Anneau \frl}
\newcommand \Afrs {Anneaux \frls}
\newcommand \afr {anneau \frl}
\newcommand \aFr {\hyperref[theorieAfr]{anneau \frl}\xspace}
\newcommand \afrs {anneaux \frls}

\newcommand \afrr {\afr réduit\xspace}
\newcommand \afrrs {\afrs réduits\xspace}
\newcommand \Afrrs {\Afrs réduits\xspace}

\newcommand \afrvr {\afr avec \ravs}
\newcommand \aFrvr {\hyperref[theorieAfrrv]{\afrvr}\xspace}
\newcommand \afrvrs {\afrs avec \ravs}

\newcommand \aftr {anneau réticulé \ftm réel\xspace}
\newcommand \aftrs {anneaux réticulés \ftm réels\xspace}

\newcommand \aG {\alg galoisienne\xspace}
\newcommand \aGs {\algs galoisiennes\xspace}

\newcommand \agB {\alg de Boole\xspace}
\newcommand \agBs {\algs de Boole\xspace}

\newcommand \agH {\alg de Heyting\xspace}
\newcommand \agHs {\algs de Heyting\xspace}

\newcommand \agq{algé\-bri\-que\xspace}
\newcommand \agqs{algé\-bri\-ques\xspace}

\newcommand \agqt{algé\-bri\-que\-ment\xspace}

\newcommand \aKr {anneau de Krull\xspace}
\newcommand \aKrs {anneaux de Krull\xspace}

\newcommand \alg {algè\-bre\xspace}
\newcommand \algs {algè\-bres\xspace}

\newcommand \algo{algo\-rithme\xspace}
\newcommand \algos{algo\-rithmes\xspace}

\newcommand \algq{al\-go\-rith\-mi\-que\xspace}
\newcommand \algqs{al\-go\-rith\-mi\-ques\xspace}

\newcommand \ali {appli\-ca\-tion \lin}
\newcommand \alis {appli\-ca\-tions \lins}

\newcommand \alo {an\-neau lo\-cal\xspace}
\newcommand \alos {an\-neaux lo\-caux\xspace}

\newcommand \algb {an\-neau \lgb}
\newcommand \algbs {an\-neaux \lgbs}

\newcommand \alrd {\alo \dcd}
\newcommand \alrds {\alos \dcds}

\newcommand \anar {anneau \ari}
\newcommand \anars {anneaux \aris}

\newcommand \anor {an\-neau nor\-mal\xspace}
\newcommand \anors {an\-neaux nor\-maux\xspace}

\newcommand \apf {\alg \pf}
\newcommand \apfs {\algs \pf}

\newcommand \apG {\alg pré\-galoisienne\xspace}
\newcommand \apGs {\algs pré\-galoisiennes\xspace}

\newcommand \arc {anneau réel clos\xspace}
\newcommand \aRc {\hyperref[theorieArc]{\arc}\xspace}
\newcommand \arcs {anneaux réels clos\xspace}

\newcommand \ari{arith\-mé\-tique\xspace}  
\newcommand \aris{arith\-mé\-tiques\xspace}  

\newcommand \Asr {Anneau \str}
\newcommand \Asrs {Anneaux \strs}
\newcommand \asr {anneau \str}
\newcommand \asrs {anneaux \strs}

\newcommand \asrvr {\asr avec \ravs}
\newcommand \asrvrs {\asrs avec \ravs}

\newcommand \atf {\alg \tf}
\newcommand \atfs {\algs \tf}

\newcommand \auto {auto\-mor\-phisme\xspace}
\newcommand \autos {auto\-mor\-phismes\xspace}

%:  b 

\newcommand \bdg {base de Gr\"obner\xspace}
\newcommand \bdgs {bases de Gr\"obner\xspace}

\newcommand \bdp {base de \dcn partielle\xspace}
\newcommand \bdps {bases de \dcn partielle\xspace}

\newcommand \bdf {base de \fap\xspace}

\newcommand \Bif {Borne infé\-rieure\xspace} %
\newcommand \bif {borne infé\-rieure\xspace} %
\newcommand \bifs {bornes infé\-rieures\xspace} %

\newcommand \bsp {borne supé\-rieure\xspace} %
\newcommand \bsps {borne supé\-rieures\xspace} %

%:  c

\newcommand \cac{corps \ac}  

\newcommand \calf{calcul formel\xspace}  

\newcommand \cara{carac\-té\-ris\-tique\xspace}  
\newcommand \caras{carac\-té\-ris\-tiques\xspace}  

\newcommand \carn{carac\-té\-ri\-sation\xspace}  
\newcommand \carns{carac\-té\-ri\-sations\xspace}  

\newcommand \carar{carac\-té\-riser\xspace}

\newcommand \carf{de carac\-tère fini\xspace}  

\newcommand \cdi{corps discret\xspace}
\newcommand \cdis{corps discrets\xspace}
  
\newcommand \cdv{changement de variables\xspace}  
\newcommand \cdvs{changements de variables\xspace}  

\newcommand \cli {clô\-ture inté\-grale\xspace}

\newcommand \codi {corps ordonné discret\xspace}
\newcommand \codis {corps ordonnés discrets\xspace}

\newcommand \coe {coef\-fi\-cient\xspace}
\newcommand \coes {coef\-fi\-cients\xspace}

\newcommand \coh {co\-hé\-rent\xspace}
\newcommand \cohs {co\-hé\-rents\xspace}

\newcommand \cohc {co\-hé\-rence\xspace}

\newcommand \coli {combi\-nai\-son \lin}
\newcommand \colis {combi\-nai\-sons \lins}

\newcommand \com {co\-ma\-xi\-maux\xspace}
\newcommand \come {co\-ma\-xi\-males\xspace}

\newcommand \coo {coor\-donnée\xspace}
\newcommand \coos {coor\-données\xspace}

\newcommand \cop {complé\-men\-taire\xspace}
\newcommand \cops {complé\-men\-taires\xspace}

\newcommand \cosv {conser\-vative\xspace}
\newcommand \cosvs {conser\-vatives\xspace}

\newcommand \cOsv {\hyperref[defithconserv]{conser\-vative}\xspace}
\newcommand \cOsvs {\hyperref[defithconserv]{conser\-vatives}\xspace}

\newcommand \covr {corps ordonné avec \ravs}
\newcommand \covrs {corps ordonnés avec \ravs}

\newcommand \cpb {compa\-tible\xspace} 
\newcommand \cpbs {compa\-tibles\xspace} 

\newcommand \cpbt {compa\-tibi\-lité\xspace} 
\newcommand \cpbtz {compa\-tibi\-lité} 

\newcommand \crc {corps réel clos\xspace}
\newcommand \crcs {corps réels clos\xspace}

\newcommand \crcd {corps réel clos discret\xspace}
\newcommand \crcds {corps réels clos discrets\xspace}

%:  d e

\newcommand \dcd {rési\-duel\-lement dis\-cret\xspace}
\newcommand \dcds {rési\-duel\-lement dis\-crets\xspace}

\newcommand \dcn {décom\-po\-sition\xspace}
\newcommand \dcns {décom\-po\-sitions\xspace}

\newcommand \dcnb {\dcn bornée\xspace}

\newcommand \dcnc {\dcn complète\xspace}

\newcommand \dcnp {\dcn partielle\xspace}

\newcommand \dcp {décom\-posa\-ble\xspace}
\newcommand \dcps {décom\-posa\-bles\xspace}

\newcommand \ddk {dimension de~Krull\xspace}
\newcommand \ddi {de dimension infé\-rieure ou égale à~}

\newcommand \ddp {domaine de Pr\"u\-fer\xspace}
\newcommand \ddps {domaines de Pr\"u\-fer\xspace}

\newcommand \ddv {domaine de valuation\xspace}
\newcommand \ddvs {domaines de valuation\xspace}

\newcommand \Demo{Démonstration\xspace}     

\newcommand \demo{démon\-stra\-tion\xspace}     
\newcommand \demos{démon\-stra\-tions\xspace}     

\newcommand \dem{démons\-tra\-tion\xspace}
\newcommand \dems{démons\-tra\-tions\xspace}

\newcommand \deno{déno\-mi\-nateur\xspace}     
\newcommand \denos{déno\-mi\-nateurs\xspace}   

\newcommand \deter {déter\-mi\-nant\xspace}  
\newcommand \deters {déter\-mi\-nants\xspace}  
  
\newcommand \Dfn{Défi\-nition\xspace}  
\newcommand \Dfns{Défi\-nitions\xspace}  
\newcommand \dfn{défi\-nition\xspace}  
\newcommand \dfns{défi\-nitions\xspace}  

\newcommand \dftr {droite réticulée \ftm réelle\xspace}
\newcommand \dftrs {droites réticulées \ftm réelles\xspace}
  
\newcommand \dil{diffé\-rentiel\xspace}  
\newcommand \dils{diffé\-rentiels\xspace}  
\newcommand \dile{diffé\-ren\-tielle\xspace}  
\newcommand \diles{diffé\-ren\-tielles\xspace}  

\newcommand \dip{diviseur principal\xspace}
\newcommand \dips{diviseurs principaux\xspace}

\newcommand \discri{discri\-minant\xspace}  
\newcommand \discris{discri\-minants\xspace}  

\newcommand \divle {dimension divisorielle\xspace}

\newcommand \dit{distri\-bu\-ti\-vité\xspace}

\newcommand \dlg{d'élar\-gis\-sement\xspace}  

\newcommand \dok {domaine de Dedekind\xspace}
\newcommand \doks {domaines de Dedekind\xspace}

\newcommand \dvla {à diviseurs\xspace}% {diviso\-riel\xspace} %anneaux
\newcommand \dvlas {à diviseurs\xspace}% %anneaux

\newcommand \dvld {\dvlt décom\-posé\xspace} %
\newcommand \dvlds {\dvlt décom\-posés\xspace} %

\newcommand \dvlg {diviso\-riel\xspace} %groupes
\newcommand \dvlgs {diviso\-riels\xspace} %groupes

\newcommand \dvli {\dvlt inver\-sible\xspace} %ideal
\newcommand \dvlis {\dvlt inver\-sibles\xspace} %ideal

\newcommand \dvlt {diviso\-riel\-lement\xspace} %

\newcommand \dvz {di\-viseur de zéro\xspace}
\newcommand \dvzs {di\-viseurs de zéro\xspace}

\newcommand \dve {divi\-si\-bi\-lité\xspace}

\newcommand \dvee {à \dve explicite\xspace}

\newcommand \dvr {diviseur\xspace}
\newcommand \dvrs {diviseurs\xspace}

%:  e

\newcommand \Eds {Exten\-sion des sca\-laires\xspace}
\newcommand \edss {exten\-sions des sca\-laires\xspace}
\newcommand \eds {exten\-sion des sca\-laires\xspace}

\newcommand \eco {\elts \com}

\newcommand \egmt {éga\-lement\xspace}

\newcommand \egt {éga\-li\-té\xspace}
\newcommand \egts {éga\-li\-tés\xspace}

\newcommand \eli{élimi\-nation\xspace}  

\newcommand \elr{élé\-men\-taire\xspace}  
\newcommand \elrs{élé\-men\-taires\xspace}  

\newcommand \elrt{élé\-men\-tai\-rement\xspace}  

\newcommand \elt{élé\-ment\xspace}  
\newcommand \elts{élé\-ments\xspace}  

\def \endo {en\-do\-mor\-phisme\xspace}
\def \endos {en\-do\-mor\-phismes\xspace}

\newcommand \entrel {rela\-tion impli\-ca\-tive\xspace}
\newcommand \entrels {rela\-tions impli\-ca\-tives\xspace}

\newcommand\evc{es\-pa\-ce vec\-to\-riel\xspace} 
\newcommand\evcs{es\-pa\-ces vec\-to\-riels\xspace} 

\newcommand \eqv {équi\-valent\xspace}  
\newcommand \eqve {équi\-va\-lente\xspace}  
\newcommand \eqvs {équi\-valents\xspace}  
\newcommand \eqves {équi\-val\-entes\xspace}  

\newcommand \eqvc {équi\-va\-lence\xspace}  
\newcommand \eqvcs {équi\-va\-lences\xspace}  

\newcommand \esid {essen\-tiel\-lement iden\-tique\xspace}  
\newcommand \esids {essen\-tiel\-lement iden\-tiques\xspace}  

\newcommand \Esid {\hyperref[defitdyesidentiques]{\esid}\xspace}  
\newcommand \Esids {\hyperref[defitdyesidentiques]{\esids}\xspace}  

\newcommand \eseq {essen\-tiel\-lement \eqve}  
\newcommand \eseqs {essen\-tiel\-lement \eqves}  

\newcommand \Eseq {\hyperref[defitheseq]{\eseq}\xspace}  
\newcommand \Eseqs {\hyperref[defitheseq]{\eseqs}\xspace}

%:  f g
\newcommand \fab {\fcn bornée\xspace}
\newcommand \fabs {\fcns bornées\xspace}

\newcommand \fat {\fcn totale\xspace}
\newcommand \fats {\fcn totales\xspace}

\newcommand \fap {\fcn partielle\xspace}
\newcommand \faps {\fcns partielles\xspace}

\newcommand \fip {filtre pre\-mier\xspace}
\newcommand \fips {filtres pre\-miers\xspace}

\newcommand \fipma {\fip maxi\-mal\xspace}
\newcommand \fipmas {\fips maxi\-maux\xspace}

\newcommand \fcn {factorisation\xspace}
\newcommand \fcns {factorisations\xspace}

\newcommand \fdi {for\-te\-ment dis\-cret\xspace}
\newcommand \fdis {for\-te\-ment dis\-crets\xspace}

\newcommand \fsa {fermé \sagq}
\newcommand \fsas {fermés \sagqs}

\newcommand \fsagc {fonction \sagc}
\newcommand \fsagcs {fonctions \sagcs}

\newcommand \fmt {formel\-lement\xspace}

\newcommand \frl {for\-tement réticulé\xspace}
\newcommand \frle {for\-tement réticulée\xspace}
\newcommand \frls {for\-tement réticulés\xspace}

\newcommand \ftm {fortement\xspace}

\newcommand\gmt{géométrie\xspace}  
\newcommand\gmts{géométries\xspace}  

\newcommand\gaq{\gmt \agq}  

\newcommand\gmq{géomé\-trique\xspace}  
\newcommand\gmqs{géomé\-triques\xspace}  

\newcommand\gmqt{géomé\-tri\-quement\xspace}  

\newcommand\gne{géné\-ra\-lisé\xspace}  
\newcommand\gnee{géné\-ra\-lisée\xspace}  
\newcommand\gnes{géné\-ra\-lisés\xspace}  
\newcommand\gnees{géné\-ra\-lisées\xspace}  

\newcommand\gnl{géné\-ral\xspace}  
\newcommand\gnle{géné\-rale\xspace}  
\newcommand\gnls{géné\-raux\xspace}  
\newcommand\gnles{géné\-rales\xspace}  

\newcommand\gnlt{géné\-ra\-lement\xspace}  

\newcommand\gnn{géné\-ra\-li\-sa\-tion\xspace}  
\newcommand\gnns{géné\-ra\-li\-sa\-tions\xspace}  

\newcommand\gnq {géné\-rique\xspace}  
\newcommand\gnqs {géné\-riques\xspace}  

\newcommand\gnr{géné\-ra\-liser\xspace}  

\newcommand \gns{géné\-ra\-lise\xspace}

\newcommand \gnt{géné\-ra\-lité\xspace}
\newcommand \gnts{géné\-ra\-lités\xspace}

\newcommand \grl{groupe \rtl}
\newcommand \grls{groupes \rtls}

\newcommand \gRl {\hyperref[theorieGrl]{\grl}\xspace}
\newcommand \gRls {\hyperref[theorieGrl]{\grls}\xspace}

\newcommand\gtr{géné\-ra\-teur\xspace}  
\newcommand\gtrs{géné\-ra\-teurs\xspace}  

%:  h i

\newcommand \homo {ho\-mo\-mor\-phisme\xspace}
\newcommand \homos {ho\-mo\-mor\-phismes\xspace}

\newcommand \hmg {homo\-gène\xspace}
\newcommand \hmgs {homo\-gènes\xspace}

\newcommand \icftr {intervalle compact réticulé \ftm réel\xspace}
\newcommand \icftrs {intervalles compacts réticulés \ftm réels\xspace}

\newcommand \icl {inté\-gra\-lement clos\xspace}
\newcommand \icle {inté\-gra\-lement close\xspace}

\newcommand \icsr {intervalle compact \stm réticulé\xspace}
\newcommand \icsrs {intervalles compacts \stm réticulés\xspace}

\newcommand \icrc {intervalle compact réel clos\xspace}
\newcommand \icrcs {intervalles compact réels clos\xspace}

\newcommand \id {idéal\xspace}
\newcommand \ids {idéaux\xspace}

\newcommand \ida {\idt \agq}
\newcommand \idas {\idts \agqs}

\newcommand \idc  {\idt de Cramer\xspace}
\newcommand \idcs {\idts de Cramer\xspace}

\newcommand \idd {idéal déter\-minan\-tiel\xspace}
\newcommand \idds {idéaux déter\-minan\-tiels\xspace}

\newcommand \idema {idéal maxi\-mal\xspace}
\newcommand \idemas {idéaux maxi\-maux\xspace}

\newcommand \idep {idéal pre\-mier\xspace}
\newcommand \ideps {idéaux pre\-miers\xspace}

\newcommand \idemi {\idep minimal\xspace}
\newcommand \idemis {\ideps minimaux\xspace}

\newcommand \idf {idéal de Fitting\xspace}
\newcommand \idfs {idéaux de Fitting\xspace}

\newcommand \idif {idéal \dvlg fini\xspace}
\newcommand \idifs {idéaux \dvlgs finis\xspace}

\newcommand \idli {idéal \dvli\xspace} %ideal
\newcommand \idlis {idéaux \dvlis\xspace} %ideal

\newcommand \idm {idem\-potent\xspace}
\newcommand \idms {idem\-potents\xspace}
\newcommand \idme {idem\-potente\xspace}
\newcommand \idmes {idem\-potentes\xspace}

\newcommand \idp {idéal prin\-ci\-pal\xspace}
\newcommand \idps {idé\-aux prin\-ci\-paux\xspace}

\newcommand \idt {iden\-ti\-té\xspace}
\newcommand \idts {iden\-ti\-tés\xspace}

\newcommand \idtr {indé\-ter\-minée\xspace}
\newcommand \idtrs {indé\-ter\-minées\xspace}

\newcommand \ifr {idéal frac\-tion\-naire\xspace}
\newcommand \ifrs {idéaux frac\-tion\-naires\xspace}

\newcommand \imd {immé\-diat\xspace}
\newcommand \imde {immé\-diate\xspace}
\newcommand \imds {immé\-diats\xspace}
\newcommand \imdes {immé\-diates\xspace}

\newcommand \imdt {immé\-dia\-te\-ment\xspace}

\newcommand \indtr {inf-demi-treillis\xspace} 

\newcommand \inteq {intui\-ti\-vement \eqve}
\newcommand \inteqs {intui\-ti\-vement \eqves}

\newcommand \Inteq {\hyperref[defextintequiv]{\inteq}\xspace}
\newcommand \Inteqs {\hyperref[defextintequiv]{\inteqs}\xspace}

\newcommand \ing {in\-ver\-se \gne}
\newcommand \ings {in\-ver\-ses \gnes}

\newcommand \iMP {in\-ver\-se de Moo\-re-Pen\-ro\-se\xspace}
\newcommand \iMPs {in\-ver\-ses de Moo\-re-Pen\-ro\-se\xspace}

\newcommand \ipp {\idep poten\-tiel\xspace}
\newcommand \ipps {\ideps poten\-tiels\xspace}

\newcommand \ird {irré\-duc\-tible\xspace}
\newcommand \irds {irré\-duc\-tibles\xspace}

\newcommand \iso {iso\-mor\-phisme\xspace}
\newcommand \isos {iso\-mor\-phismes\xspace}

\newcommand \itf {idéal \tf}
\newcommand \itfs {idé\-aux \tf}

\newcommand \itid {intui\-ti\-vement iden\-tique\xspace}
\newcommand \itids {intui\-ti\-vement iden\-tiques\xspace}

\newcommand \iv {inversible\xspace}
\newcommand \ivs {inversibles\xspace}

\newcommand \ivdg {inverse divisoriel\xspace} %groupes
\newcommand \ivdgs {inverses divisoriels\xspace} %groupes

\newcommand \ivde {inverse divisorielle\xspace} %anneaux, liste 
\newcommand \ivdes {inverses divisorielles\xspace} %anneaux, liste 

\newcommand \ivda {inverse divisoriel\xspace} %anneaux, ideaux
\newcommand \ivdas {inverses divisoriels\xspace} %anneaux

%:  l m

\newcommand \lgb {local-global\xspace}
\newcommand \lgbe {locale-globale\xspace}
\newcommand \lgbs {local-globals\xspace}

\newcommand \lin {liné\-aire\xspace}
\newcommand \lins {liné\-aires\xspace}

\newcommand \lint {liné\-ai\-rement\xspace}

\newcommand \lmo {\lot mono\-gène\xspace}
\newcommand \lmos {\lot mono\-gènes\xspace}

\newcommand \lnl {\lot \nl}
\newcommand \lnls {\lot \nls}

\newcommand \lot {loca\-lement\xspace}

\newcommand \lon {loca\-li\-sation\xspace}
\newcommand \lons {loca\-li\-sations\xspace}

\newcommand \lop {\lot prin\-cipal\xspace}
\newcommand \lops {\lot prin\-cipaux\xspace}

\newcommand \lsdz {\lot \sdz}

\newcommand \mdi {mo\-dule des \diles}

\newcommand \mlm {mo\-dule \lmo}
\newcommand \mlms {mo\-dules \lmos}

\newcommand \mlmo {ma\-tri\-ce de loca\-li\-sation
mono\-gène\xspace}
\newcommand \mlmos {ma\-tri\-ces de loca\-li\-sation
mono\-gène\xspace}

\newcommand \mlp {ma\-tri\-ce de loca\-li\-sation
prin\-ci\-pa\-le\xspace}
\newcommand \mlps {ma\-tri\-ces de loca\-li\-sation
prin\-ci\-pa\-le\xspace}

\newcommand \mo {mo\-no\"{\i}de\xspace}
\newcommand \mos {mo\-no\"{\i}des\xspace}

\newcommand \moco {\mos \com}

\newcommand \molo {morphisme de \lon\xspace}
\newcommand \molos {morphismes de \lon\xspace}

\newcommand \mom {mo\-nô\-me\xspace}
\newcommand \moms {mo\-nô\-mes\xspace}

\newcommand \moquo {morphisme de passage au quotient\xspace}
\newcommand \moquos {morphismes de passage au quotient\xspace}

\newcommand \mpf {mo\-dule \pf}
\newcommand \mpfs {mo\-dules \pf}

\newcommand \mpl {mo\-dule plat\xspace}
\newcommand \mpls {mo\-dules plats\xspace}

\newcommand \mpn {ma\-trice de \pn}
\newcommand \mpns {ma\-trices de \pn}

\newcommand \mprn {ma\-trice de \prn}
\newcommand \mprns {ma\-trices de \prn}

\newcommand \mptf {mo\-dule \ptf}
\newcommand \mptfs {mo\-dules \ptfs}

\newcommand \mrc {mo\-dule \prc}
\newcommand \mrcs {mo\-dules \prcs}

\newcommand \mtf {module \tf}
\newcommand \mtfs {modules \tf}

%:  n 

\newcommand \ncr{néces\-saire\xspace}       
\newcommand \ncrs{néces\-saires\xspace}       

\newcommand \ncrt{néces\-sai\-rement\xspace}       

\newcommand \ndz {régu\-lier\xspace}
\newcommand \ndzs {régu\-liers\xspace}

\newcommand \nl {simple\xspace}
\newcommand \nls {simples\xspace}

\newcommand \noco {\noe \coh}
\newcommand \nocos {\noes \cohs}

\newcommand \Noe {Noether\xspace}

\newcommand \noe {noethé\-rien\xspace}
\newcommand \noes {noethé\-riens\xspace}
\newcommand \noee {noethé\-rienne\xspace}
\newcommand \noees {noethé\-riennes\xspace}

\newcommand \noet {noethé\-ria\-nité\xspace}

\newcommand \nst {Null\-stellen\-satz\xspace}
\newcommand \nsts {Null\-stellen\-s\"atze\xspace}

%:  o
\newcommand \op{opé\-ra\-tion\xspace}  
\newcommand \ops{opé\-ra\-tions\xspace}
\newcommand \opari{\op\ari}  
\newcommand \oparis{\ops\aris}  
\newcommand \oparisv{\ops\arisv}  

\newcommand \oqc {ouvert \qc}
\newcommand \oqcs {ouverts \qcs}

\newcommand \ort{or\-tho\-go\-nal\xspace}  
\newcommand \orte{or\-tho\-go\-na\-le\xspace}  
\newcommand \orts{or\-tho\-go\-naux\xspace}  
\newcommand \ortes{or\-tho\-go\-na\-les\xspace}  

%:  p

\newcommand \pa {couple saturé\xspace}
\newcommand \pas {couples saturés\xspace}
 
\newcommand \paral{paral\-lèle\xspace}  
\newcommand \parals{paal\-lèles\xspace}  

\newcommand \paralm{paral\-lè\-lement\xspace}

\newcommand \pb{pro\-blè\-me\xspace}  
\newcommand \pbs{pro\-blè\-mes\xspace}  

\newcommand \peq {purement équa\-tion\-nelle\xspace}
\newcommand \peqs {purement équa\-tion\-nelles\xspace}

\newcommand \pf {de \pn finie\xspace}

\newcommand \plc {rési\-duel\-lement \zed}
\newcommand \plcs {rési\-duel\-lement \zeds}

\newcommand \Plg {Prin\-cipe \lgb}
\newcommand \plg {prin\-cipe \lgb}
\newcommand \plgs {prin\-cipes \lgbs}

\newcommand \plga {\plg abs\-trait\xspace}
\newcommand \plgas {\plgs abs\-traits\xspace}

\newcommand \Plgc {\Plg con\-cret\xspace}
\newcommand \plgc {\plg con\-cret\xspace}
\newcommand \plgcs {\plgs con\-crets\xspace}

\newcommand \pn {présen\-ta\-tion\xspace}
\newcommand \pns {présen\-ta\-tions\xspace}

\newcommand \pog {\pol \hmg\xspace}
\newcommand \pogs {\pols \hmgs\xspace}

\newcommand \Pol {Poly\-nôme\xspace}
\newcommand \Pols {Poly\-nômes\xspace}

\newcommand \pol {poly\-nôme\xspace}
\newcommand \pols {poly\-nômes\xspace}

\newcommand \poll{poly\-nomial\xspace}  
\newcommand \polls{poly\-nomiaux\xspace}  
\newcommand \polle{poly\-no\-miale\xspace}  
\newcommand \polles{poly\-no\-miales\xspace}  

\newcommand \pollt{poly\-no\-mia\-lement\xspace}  

\newcommand \polfon {\pol fon\-da\-men\-tal\xspace}
\newcommand \polmu {\pol rang\xspace}
\newcommand \polmus {\pols rang\xspace}
\newcommand \polcar {\pol carac\-té\-ris\-tique\xspace}
\newcommand \polmin {\pol mini\-mal\xspace}

\newcommand \polg {\pol g\'en\'e\-rateur\xspace}
\newcommand \polgs {\pols g\'en\'e\-rateurs\xspace}
\newcommand \polgmin {\polg minimal\xspace}

\newcommand \prc {\pro de rang constant\xspace}
\newcommand \prcs {\pros de rang constant\xspace}

\newcommand \prcc {prin\-ci\-pe de \rcc}
\newcommand \prca {prin\-ci\-pe de \rca}
\newcommand \prce {prin\-ci\-pe de \rce}

\newcommand \prmt {préci\-sé\-ment\xspace}
\newcommand \Prmt {Préci\-sé\-ment\xspace}

\newcommand \prn {pro\-jec\-tion\xspace}
\newcommand \prns {pro\-jec\-tions\xspace}

\newcommand \pro {pro\-jec\-tif\xspace}
\newcommand \pros {pro\-jec\-tifs\xspace}

\newcommand \prr {pro\-jec\-teur\xspace}
\newcommand \prrs {pro\-jec\-teurs\xspace}

\newcommand \Prt {Pro\-pri\-été\xspace}
\newcommand \Prts {Pro\-pri\-étés\xspace}
\newcommand \prt {pro\-pri\-été\xspace}
\newcommand \prts {pro\-pri\-étés\xspace}

\newcommand \ptf {\pro \tf}
\newcommand \ptfs {\pros \tf}

\newcommand \qc {quasi-compact\xspace}
\newcommand \qcs {quasi-compacts\xspace}

\newcommand \qi {qua\-si in\-tè\-gre\xspace}
\newcommand \qis {qua\-si in\-tè\-gres\xspace}

\newcommand \qnl {quasi-\nl}
\newcommand \qnls {quasi-\nls}

%:  r
\newcommand \ralg {règle \agq}
\newcommand \ralgs {règles \agqs}

\newcommand \rav {racine virtuelle\xspace}
\newcommand \ravs {racines virtuelles\xspace}

\newcommand \rcc {\rcm con\-cret\xspace}
\newcommand \rca {\rcm abs\-trait\xspace}
\newcommand \rce {\rcc des é\-ga\-li\-tés\xspace}

\newcommand \rcm {recol\-lement\xspace}
\newcommand \rcms {recol\-lements\xspace}

\newcommand \rcv {recou\-vrement\xspace} 
\newcommand \rcvs {recou\-vrements\xspace}

\newcommand \rde {rela\-tion de dépen\-dance\xspace}
\newcommand \rdes {rela\-tions de dépen\-dance\xspace}

\newcommand \rdi {\rde inté\-grale\xspace}
\newcommand \rdis {\rdes inté\-grales\xspace}

\newcommand \rdl {\rde \lin}
\newcommand \rdls {\rdes \lins}

\newcommand \rdt {rési\-duel\-lement\xspace}

\newcommand \rdy {règle dyna\-mique\xspace}
\newcommand \rdys {règles dyna\-miques\xspace}

\newcommand \red {règle directe\xspace}
\newcommand \reds {règles directes\xspace}

\newcommand \rex {règle exis\-ten\-tielle simple\xspace}
\newcommand \rexs {règles exis\-ten\-tielles simples\xspace}

\newcommand \reX {\hyperref[defexistsimple]{règle exis\-ten\-tielle simple}\xspace}
\newcommand \reXs {\hyperref[defexistsimple]{règles exis\-ten\-tielles simples}\xspace}

\newcommand \rexri {règle exis\-ten\-tielle rigide\xspace}
\newcommand \rexris {règles exis\-ten\-tielles rigides\xspace}

\newcommand \rsim {règle de simplification\xspace}
\newcommand \rsims {règles de simplification\xspace}

\newcommand \rtl {réti\-culé\xspace}
\newcommand \rtls {réti\-culés\xspace}

\newcommand \rmq {\rcm de quotients\xspace} % recollement de quotients
\newcommand \rvq {\rcv par quotients\xspace} % recouvrement par quotients
\newcommand \rmqs {\rcms de quotients\xspace} % 
\newcommand \rvqs {\rcvs par quotients\xspace} % 

\newcommand \rpf {réduite-de-présen\-tation-finie\xspace}
\newcommand \rpfs {réduites-de-présen\-tation-finie\xspace}

\newcommand \rrl {relation de \recu \lin}
\newcommand \rrls {relations de \recu \lin}

%:  s

\newcommand \sad {\salg dynamique\xspace}
\newcommand \sads {\salgs dynamiques\xspace}

\newcommand \sagq {semi\agq}
\newcommand \sagqs {semi\agqs}

\newcommand \sagc {\sagq continue\xspace}
\newcommand \sagcs {\sagqs continues\xspace}

\newcommand \salg {structure \agq}
\newcommand \salgs {structures \agqs}

\newcommand \scentrel {relation semi-implicative\xspace}
\newcommand \scentrels {relations semi-implicatives\xspace}

\newcommand \scf {schéma fini\-taire\xspace}
\newcommand \scfs {schémas fini\-taires\xspace}

\newcommand \scl {schéma \elr}
\newcommand \scls {schémas \elrs}

\newcommand \sdo {\sdr \orte}
\newcommand \sdos {\sdrs \ortes}

\newcommand \sdr {somme directe\xspace}
\newcommand \sdrs {sommes directes\xspace}

\newcommand \sdz {sans \dvz}

\newcommand \sfio {sys\-tème fondamental d'\idms ortho\-gonaux\xspace}
\newcommand \sfios {sys\-tèmes fondamentaux d'\idms ortho\-gonaux\xspace}

\newcommand \sgr {\sys \gtr}
\newcommand \sgrs {\syss \gtrs}

\newcommand \slgb {stricte\-ment \lgb}
\newcommand \slgbs {stricte\-ment \lgbs}

\newcommand \sli {\sys \lin}
\newcommand \slis {\syss \lins}

\newcommand \smq {symé\-trique\xspace}
\newcommand \smqs {symé\-triques\xspace}

\newcommand \spb {sépa\-rable\xspace}  % algebres
\newcommand \spbs {sépa\-rables\xspace}

\newcommand \spe {spéci\-fi\-cation\xspace}
\newcommand \spes {spéci\-fi\-cations\xspace}

\newcommand \spi {\spe incomplète\xspace}
\newcommand \spis {\spes incomplètes\xspace}

\newcommand \spl {sépa\-rable\xspace}  % polynomes
\newcommand \spls {sépa\-rables\xspace}

\newcommand \spo {semipolynôme\xspace}
\newcommand \spos {semipolynômes\xspace}

\newcommand \spt{sépa\-ra\-bi\-lité\xspace}

\newcommand \srg {suite régu\-lière\xspace}
\newcommand \srgs {suites régu\-lières\xspace}

\newcommand \srl{suite r\'ecur\-rente \lin}
\newcommand \srls{suites r\'ecur\-rentes \lins}
\newcommand \Srls{Suites r\'ecur\-rentes \lins}

\newcommand \stf {strictement fini\xspace}
\newcommand \stfs {strictement finis\xspace}
\newcommand \stfe {strictement finie\xspace}
\newcommand \stfes {strictement finies\xspace}

\newcommand \stl {stablement libre\xspace}
\newcommand \stls {stablement libres\xspace}

\newcommand \stm {strictement\xspace}

\newcommand \str {\stm réticulé\xspace}
\newcommand \stre {\stm réticulée\xspace}
\newcommand \strs {\stm réticulés\xspace}
\newcommand \stres {\stm réticulées\xspace}

\newcommand \sul {supplé\-men\-taire\xspace}
\newcommand \suls {supplé\-men\-taires\xspace}

\newcommand \Sut {Support\xspace}
\newcommand \Suts {Supports\xspace}
\newcommand \sut {support\xspace}

\newcommand \syc {\sys de coordon\-nées\xspace}
\newcommand \sycs {\syss de coordon\-nées\xspace}

\newcommand \syp {\sys \poll}
\newcommand \syps {\syss \polls}

\newcommand \sys {sys\-tème\xspace}
\newcommand \syss {sys\-tèmes\xspace}

%:  t
\newcommand \talg {théorie \agq}
\newcommand \talgs {théories \agqs}

\newcommand \tco {théorie cohé\-rente\xspace}
\newcommand \tcos {théories cohé\-rentes\xspace}

\newcommand \tdy {théorie dyna\-mique\xspace}
\newcommand \tdys {théories dyna\-miques\xspace}

\newcommand \tel {théorie exis\-ten\-tielle\xspace}
\newcommand \tels {théories exis\-ten\-tielles\xspace}

\newcommand \telri {théorie exis\-ten\-tielle rigide\xspace}
\newcommand \telris {théories exis\-ten\-tielles rigides\xspace}

\newcommand \tf {de type fini\xspace}

\newcommand \tfo {théorie formelle\xspace}
\newcommand \tfos {théorie formelles\xspace}

\newcommand \tgm {théorie \gmq}
\newcommand \tgms {théories \gmqs}

\newcommand \Tho {Théo\-rème\xspace}
\newcommand \Thos {Théo\-rèmes\xspace}
\newcommand \tho {théo\-rème\xspace}
\newcommand \thos {théo\-rèmes\xspace}

\newcommand \thoc {théo\-rème$\mathbf{^*}$~}

\newcommand \tpe {théorie \peq}
\newcommand \tpes {théories \peqs}

\newcommand \trdi {treil\-lis dis\-tri\-bu\-tif\xspace}
\newcommand \trdis {treil\-lis dis\-tri\-bu\-tifs\xspace}

\newcommand \trel {trans\-for\-mation \elr}
\newcommand \trels {trans\-for\-mations \elrs}

%:  u
\newcommand \umd {unimo\-du\-laire\xspace}
\newcommand \umds {unimo\-du\-laires\xspace}

\newcommand \unt {uni\-taire\xspace}
\newcommand \unts {uni\-taires\xspace}

\newcommand \uvl {uni\-versel\xspace}
\newcommand \uvle {uni\-ver\-selle\xspace}
\newcommand \uvls {uni\-versels\xspace}
\newcommand \uvles {uni\-ver\-selles\xspace}

%:  v

\newcommand \vfn {véri\-fi\-cation\xspace}
\newcommand \vfns {véri\-fi\-cations\xspace}

\newcommand \vmd {vec\-teur \umd}
\newcommand \vmds {vec\-teurs \umds}

\newcommand \zed {z\'{e}ro-di\-men\-sionnel\xspace}
\newcommand \zede {z\'{e}ro-di\-men\-sion\-nelle\xspace}
\newcommand \zeds {z\'{e}ro-di\-men\-sion\-nels\xspace}
\newcommand \zedes {z\'{e}ro-di\-men\-sion\-nelles\xspace}

\newcommand \zedr {\zed réduit\xspace}
\newcommand \zedrs {\zeds réduits\xspace}

\newcommand \zmt {\tho de Zariski-Grothen\-dieck\xspace}

%:  ------- maths constructives

\newcommand \cof {cons\-truc\-tif\xspace}
\newcommand \cofs {cons\-truc\-tifs\xspace}

\newcommand \cov {cons\-truc\-tive\xspace}
\newcommand \covs {cons\-truc\-tives\xspace}

\newcommand \coma {\maths\covs}
\newcommand \clama {\maths clas\-siques\xspace}

\renewcommand \cot {cons\-truc\-ti\-vement\xspace}

\newcommand \matn {mathé\-ma\-ticien\xspace}
\newcommand \matne {mathé\-ma\-ti\-cienne\xspace}
\newcommand \matns {mathé\-ma\-ticiens\xspace}
\newcommand \matnes {mathé\-ma\-ti\-ciennes\xspace}

\newcommand \maths {mathé\-ma\-tiques\xspace}
\newcommand \mathe {mathé\-ma\-tique\xspace}

\newcommand \prco {démons\-tration \cov}
\newcommand \prcos {démons\-trations \covs}

%!TEX encoding =  UTF-8 Unicode
%!TEX root =  mainVX.tex

%-------- newtheorem ----------

\theoremstyle{plain}
\newtheorem{ftheorem}{Théorème}[section]
\newtheorem{fthdef}[ftheorem]{Théorème et définition}
\newtheorem{fpstf}[ftheorem]{Positivstellensatz formel}
\newtheorem{fpst}[ftheorem]{Positivstellensatz}
\newtheorem{flemma}[ftheorem]{Lemme} 
\newtheorem{fcorollary}[ftheorem]{Corolaire} 
\newtheorem{fconjecture}[ftheorem]{Conjecture} 
\newtheorem{fproposition}[ftheorem]{Proposition} 
\newtheorem{fpbu}[ftheorem]{Problème universel} 
\newtheorem{fprpta}[ftheorem]{Propriétés attendues} 
\newtheorem{fpropdef}[ftheorem]{Proposition et définition}
\newtheorem{ffact}[ftheorem]{Fait} 
\newtheorem{fplcc}[ftheorem]{Principe local-global concret}

\theoremstyle{definition}
\newtheorem{fconvention}[ftheorem]{Convention}
\newtheorem{fdefinition}[ftheorem]{Définition} 
\newtheorem{fdfni}[ftheorem]{Définition informelle} 
\newtheorem{fdefinitions}[ftheorem]{Définitions} 
\newtheorem{fnotation}[ftheorem]{Notation} 
\newtheorem{fproblem}[ftheorem]{Problème}
\newtheorem{fquestion}[ftheorem]{Question}
\newtheorem{fquestions}[ftheorem]{Questions}
\newtheorem{fcontext}[ftheorem]{Contexte}
\newtheorem{fdefinitionc}[ftheorem]{Définition\etoz} 
\newtheorem{fdefinota}[ftheorem]{Définition et notation}

\theoremstyle{remark}
\newtheorem{fexample}[ftheorem]{Exemple}%[section]
\newtheorem{fexamples}[ftheorem]{Exemples}%[section]
\newtheorem{fnotes}[ftheorem]{Notes}%[section]
\newtheorem{fremark}[ftheorem]{Remarque}%[section]
\newtheorem{fremarks}[ftheorem]{Remarques}%[section]
\newtheorem{fcomment}[ftheorem]{Commentaire}

\stMF

\maketitle

Cet article est paru en français dans: \textsl{Communications in Algebra}, {\bf 42}: 3768–3781, (2014). 

DOI: 10.1080/00927872.2013.794360

\begin{abstract} Soit $\V$ un domaine de valuation.
Nous donnons un \algo  pour calculer une base du $\V$-saturé d'un sous-\mtf
d'un \Vmo libre (avec une base éventuellement infinie). Nous l'appliquons pour calculer le $\V$-saturé d'un sous-$\VX$-\mtf
de $\VX^n$ ($n\in\NN^*$). Ceci permet enfin de calculer
un \sgr fini pour les syzygies sur  $\VX$ d'une famille finie de vecteurs de $\VX^k$.
\end{abstract}

\sni {\small {\bf Mots clés:}  Saturation, Cohérence, Syzygies, Anneau de valuation, Calcul formel, Algèbre constructive.}

\selectlanguage{english}
\begin{abstract}
\smallskip We give an algorithm for computing the $\V$-saturation of any finitely
generated submodule of $\V[X]^n$ ($n \in \mathbb{N}^*$),
where $\V$ is a valuation domain. This allows us to compute a finite
system of generators for the syzygy module of any finitely generated
submodule of $\V[X]^k$.
\end{abstract}

\sni {\small\textbf{Key words:}  Saturation, Coherence, Syzygies, Valuation domains, Computer algebra, Constructive Algebra.}

\selectlanguage{french}

\small

\setcounter{tocdepth}{4}
\markboth{Table des matières}{Table des matières}

\printcontents[french]{}{1}{}
\normalsize

%: myheadings
\pagestyle{myheadings} \markboth{Un algorithme pour le calcul des syzygies sur  $\mathbf{V}[X]$}{
H. Lombardi, C. Quitté et I. Yengui}

%%%%%%%%%%%%%%%%%%%%%%%%%%%%%%%%%%%%%%%%%%%%%%%%%%%%

\section*{Introduction} \label{fsec Introduction}
\addcontentsline{toc}{section}{Introduction}

Cela fait partie du folklore (voir par exemple \cite[Th.\ 7.3.3]{fGlaz}) que pour un domaine de valuation~$\V$, l'anneau $\V[X]$ est cohérent, (i.e., le module des syzygies pour un \itf de $\VX$ est \tf). 
Néanmoins la preuve dans la référence citée découle d'un résultat profond et difficile dans le gros article \cite{fGR}. 
Et il semble en outre qu'il n'existe pas d'\algo dans la littérature existante pour ce résultat remarquable.

Dans le cas des anneaux \noes \cohs (non \ncrt des \ddvs),
on sait que l'anneau de \pols $\R[X_1,\dots,X_n]$ est \egmt \noe \coh. 
Une preuve constructive se trouve dans \cite{fric74}
et est exposée dans le livre \citealt*{fMRR}. 
On peut aussi dans ce cas utiliser les bases de Gr\"obner, introduites
par Buchberger pour les anneaux de \pols sur des corps 
(voir par exemple \cite{fLou,fHY,fY}).

Néanmoins, la \noet n'est pas le fin mot de l'affaire puisque
le résultat concernant la cohérence dans le cas des corps s'étend facilement aux anneaux \zedrs
(aussi appelés Von Neuman réguliers, ou absolument plats).

Dans \citealt*{fLSY}, un \algo est donné pour le calcul d'une base de Gr\"obner
pour un \itf de $\VX$ dans le cas  d'un \ddv de dimension 1
(non \ncrt \noe).
On peut en déduire un \algo pour le calcul d'un  \sgr fini pour les syzygies d'un \itf de~$\VX$.

Rappelons (voir par exemple \citealt*{fMRR}) que sur un anneau \coh tout \mpf $M$ est un module \coh
(i.e., le module des syzygies pour un sous-\mtf de $M$ est \tf).

Dans l'article présent, nous donnons un \algo débarrassé de toute hypothèse \noee, ainsi que de toute hypothèse concernant la dimension de Krull, 
pour  le calcul
d'un \sgr fini des syzygies d'une famille 
finie de vecteurs de $\VX^k$.
Nous pensons fournir ainsi
par la m\^eme occasion la première preuve constructive du résultat
(car notre \algo est prouvé constructivement).

Rappelons d'abord que pour un sous-\Rmo $M$ d'un module $N$ avec $\R$
un anneau intègre, le \textsl{$\R$-saturé de $M$ dans $N$} est le \Rmo 
$$\Sat_{\R,N}(M)=\sotq{x\in N}{\exists a\in \R^*,\,ax\in M}.$$
S'il y a plusieurs anneaux en présence, on dira pour préciser le ``$\R$-saturé de $M$ dans $N$''.
 Dans le cas où $N$ 
est un module libre (de la forme $\R^n $, $n\in \NN$ ou $\R^{(I)}$, $I$ infini)
on obtient par extension des scalaires un \Kev $\K\otimes N\simeq\K^n $ ou $\K^{(I)}$, où $\K$ est le corps des fractions de $\R$. On a alors aussi  $\Sat_{\R,N}(M)=\K.M\cap N$,
où  $\K.M$ est le sous-\Kev de $\K\otimes N$  engendré par $M$.
%: rem1
Le module $M$ est dit  $\R$-saturé s'il est égal à son $\R$-saturé.
En \gnl, le $\R$-saturé d'un $\RX$-\mtf dans $N=\RX^n $ n'a aucune raison d'\^etre lui-m\^eme un  $\RX$-\mtf,
mais c'est ce qui arrive si l'anneau $\R$ est  un \ddv $\V$.
 
Nous donnons dans la section \ref{fsatVmotf} un \algo incrémental pour calculer une base du $\V$-saturé 
d'un sous-\mtf d'un \Vmo libre (avec une base éventuellement infinie).
Cet \algo n'est pas un scoop, mais il est mis dans une forme où 
nous sommes capables de l'utiliser, dans la section \ref{fsatVXmotf}, pour calculer un \sgr fini du $\VX$-module obtenu en $\V$-saturant un sous-$\VX$-\mtf
de $\VX^n$ ($n\in \NN^*$). 

Ceci démontre que le $\V$-saturé d'un $\VX$-\mtf
dans $\VX^n$ est bien un $\VX$-\mtf.

Enfin on obtient comme corolaire immédiat,
dans la section \ref{fVXsysygies}, le calcul
d'un \sgr fini pour les syzygies sur  $\VX$ d'une famille 
finie de vecteurs de $\VX^k$.

\medskip
Dans cet article tous les anneaux sont commutatifs et unitaires.

%%%%%%%%%%%%%%%%%%%%%%%%%%%%%%%%%%%%%%%%%%%%%%%%%%%%%%%%
\section{Saturation d'un \Vmo \tf dans un \Vmo libre} \label{fsatVmotf}

\noindent  {\bf Terminologie:} 
Nous utiliserons dans cet article 
la terminologie usuelle de l'algèbre constructive comme
dans \citealt*{fMRR}, bien adaptée au Calcul Formel.

\smallskip Pour un anneau $\R$ arbitraire on note $\R\eti$ le groupe multiplicatif des unités de $\R$.
L'anneau~$\R$ est dit \textsl{discret} lorsque l'on a un \algo qui 
décide si $x=0$ ou $x\neq 0$ pour un \elt arbitraire de $\R$.
Un anneau $\R$ est dit \textsl{local} lorsque l'on a de fa{\c c}on explicite l'implication
\[
\forall x,y\in\R,\,x+y\in\R\eti\; \Longrightarrow\;(x \in\R\eti\;\vu \;y\in\R\eti)
\]
Il revient au m\^eme de demander
\[
\forall x\in\R,(x \in\R\eti\;\vu \;1+x\in\R\eti)
\]
Un anneau local non nul $\R$ admet pour unique idéal maximal son \textsl{radical de Jacobson} 
\[
\Rad(\R)=\sotq{x\in\R}{1+x\R\subseteq \R\eti}
\]
(pour un anneau arbitraire $\R$ cet idéal est égal à l'intersection des idéaux maximaux lorsqu'on se situe en mathématiques classiques).

L'anneau quotient $\k=\R/\Rad(\R)$ est un corps, appelé \textsl{corps résiduel} de $\R$.
L'anneau local $\R$ est dit \textsl{\dcd} lorsque l'on a de fa{\c c}on explicite la disjonction 
\[\forall x\in\R,\,(x\in\R\eti\;\vu\;x\in\Rad(\R)).
\] 
Dans ce cas le corps résiduel est un corps discret. On a alors un \algo qui décide la disjonction  ``$x=0$ ou $x$ inversible'' pour tout $x\in\k$.

\medskip Soit $I$ un ensemble, fini ou infini, muni d'une relation d'ordre total ``discrète'', i.e. nous disposons d'un \algo qui décide la disjonction
$$i<j \quad \vu\quad  i=j \quad \vu\quad  i>j$$ 
pour $i,j\in I$. On peut alors, pour n'importe quel anneau $\R$, considérer le module libre $\R^{(I)}$ dans lequel on notera $(\e_i)_{i\in I}$ la base naturelle. Si $\R$ est un anneau discret tout
vecteur $a=\sum_{i\in J} a_i\e_i$ dans~$\R^{(I)}$
($J$ est une partie finie de $I$) peut \^etre testé nul ou non nul, et lorsqu'il est non nul, on peut déterminer le plus petit indice
$i$ pour lequel $a_i\neq 0$.

Pour déterminer un sous-\Rmo saturé d'un module libre $\R^{(I)}$
on fera appel aux  lemmes~\ref{flemSat1} et~\ref{flemSat2} suivants.

%:     Lemma{lemSat1}
\begin{flemma} \label{flemSat1}
Soit $\R$ un anneau intègre et $N$ un \Rmo sans torsion. Si $N=M\oplus P$
alors~$M$ est saturé dans $N$.
\end{flemma}
%--------- fin lemma ---------------------------------------------- 

Dans toute la suite de cette section,  on suppose que $\R$ est un anneau local intègre \dcd avec $\k=\R/\Rad(\R)$.
%:     Notation{notaPiv}
\begin{fdefinota} \label{fnotaPiv}~
\begin{enumerate}
\item Un vecteur $C=\sum_i c_i \e_i$ de $\R^{(I)}$ 
 (que nous voyons comme un vecteur colonne)
 est dit \textsl{primitif} s'il est \rdt non nul, i.e., l'un de ses \coes
 est une unité. Dans ce cas-là nous notons $\piv(C)$ le plus petit indice
 $i$ pour lequel $c_i$ est \rdt non nul. C'est \textsl{l'indice pivot de $C$}. Nous notons $\cq(C)$ 
 (\textsl{coefficient pivot de $C$}) le scalaire $c_i$ correspondant.
\item Une famille finie $(C^k)_{k\in K}$ de vecteurs primitifs est dite \textsl{$\k$-échelonnée}
si les $\piv(C^k)$ sont deux à deux distincts. Une famille $\k$-échelonnée est aussi appelée simplement  \textsl{échelonnée}.
On note alors 
  $$
  \piv(C)=\sotq{\piv(C^k)}{k\in K}.
  $$
Une matrice à \coes dans $\R$ sera dite échelonnée si ses vecteurs colonnes sont primitifs et forment une famille échelonnée.
\item La famille $(C^k)_{k\in K}$ est dite \textsl{en forme $\k$-échelonnée
stricte}, lorsque $K$ est un ensemble totalement ordonné, si elle est échelonnée et si en outre, pour $j<k$ dans $K$ le \coe d'indice $\piv(C^j)$ de $C^k$ est nul.
\end{enumerate}
\end{fdefinota}
%--------- fin notation ---------------------------------------------- 

%\hum{Pour définir, et pour construire dans le cas d'un \ddv, une forme  $\k$-échelonnée
%stricte, il n'y a pas besoin de supposer que l'anneau local est \dcd.
%L'important est que chaque colonne soit affectée d'un indice pivot avec le \coe correspondant inversible, et que en suivant l'ordre de $K$, les \coes d'une colonne pour les indices pivots de colonnes précédentes soient nuls.
%}
 
%:     Lemma{lemSat2}
\begin{flemma} \label{flemSat2}\label{flemEchStrict}
Si une famille finie $C=(C^k)_{k\in K}$ de vecteurs de $\R^{(I)}$ 
est $\k$-échelonnée, elle
forme une base du \Rmo $M$ qu'elle engendre et $M$ admet 
pour \sul le \Rmo libre
 $$
 P= \bigoplus\nolimits_{j\in J}\R\,\e_j\quad \hbox{avec } J=\sotq{j\in I}{j\notin \piv(C)}.
 $$
En outre  l'ensemble $\piv(C)$ ne dépend que du module  $M$. En effet un indice $j$ est dans $\piv(C)$ \ssi il existe un vecteur primitif $U$
dans $M$ avec $\piv(U)=j$.  
\end{flemma}
%--------- fin lemma ---------------------------------------------- 
NB: les deux \Rmos libres en question sont automatiquement $\R$-saturés 
d'après le lemme~\ref{flemSat1}.
\begin{proof}
Nous donnons l'argument lorsque $I$ est fini, mais il s'adapte
facilement au cas \gnl.
Si on ordonne la famille formée des $\e_j$ pour $j\in J$ et des $C^{k}$ pour $k\in K$, en ordre d'indices pivots croissants. 
Alors la matrice formée par ces colonnes est \rdt triangulaire avec des \coes inversibles sur la diagonale, donc elle est \rdt inversible, donc elle est inversible. Ceci montre que $M$ et $P$ sont \suls et admettent les bases
voulues. Le reste est laissé au lecteur.
\end{proof}

\subsection*{L'algorithme de saturation}

%:     context
\begin{fcontext} \label{fcontext1}
 
\noindent Soit $\V$ un \ddv, i.e. un anneau intègre dans lequel pour tous $a,b$
on a $a\mid b$ ou $b\mid a$, i.e. plus \prmt on a un \algo
qui décide (pour $a,b$ donnés dans $\V$) la disjonction
$$
\exists x\in\V,\,a=xb\quad \vu\quad \exists x\in\V,\,b=xa
$$
et fournit l'\elt $x$. 
On sait que $\V$ est un anneau local (supposons $a+b$ inversible, si $a$ divise $b$ il divise $a+b$ et il est inversible, si $b$ divise $a$ il divise $a+b$ et il est inversible). 
On note $\K$ son corps de fractions et $\k$ son corps résiduel.
Comme $\V$ est supposé intègre de manière explicite, le corps $\K$
est un corps discret. 
On suppose en outre que $\V$ est \textsl{\dcd}, ce qui signifie que l'on a un \algo 
pour décider si un \elt de $\V$
est une unité. En particulier les lemmes \ref{flemSat1} et \ref{flemSat2} sont satisfaits avec
l'anneau $\V$.
\end{fcontext}

\noindent \textsl{Remarque.}
Dans un domaine de valuation \dcd, on a un test pour répondre à la question
 \gui{$a\mid b$?}. En effet, pour $a,b$ non nuls, si $a=bx$, alors $a\mid b$ \ssi $x$ est une unité.

\medskip  
On considère un sous-\Vmo $M=\V \,a^1+\cdots+\V \, a^m$ de $\V^{(I)}$.
L'objectif de cette section est de donner un \algo pour calculer une 
base 
du $\V$-saturé de $M$ dans $\V^{(I)}$, \Vmo que nous notons de manière abrégée
$\Sat(M)$.
En fait, comme seul un nombre fini d'indices sont en cause, on peut aussi 
bien supposer que $I$ est fini et que $M$ est engendré par les colonnes d'une matrice $F$.
Pour visualiser la chose nous pouvons écrire les lignes de la matrice
en ordre décroissant pour les indices.
 
Une manière brutale de calculer $\Sat(M)$ serait de réduire $F$ 
à la forme de Smith par des manipulations \elrs.
On voit alors qu'après un changement de base convenable, le module $M$
est engendré par des vecteurs $v_if_i$ (où les $f_i$ forment une partie
d'une base
et les $v_i$ sont non nuls). Dans ces conditions, la module $\Sat(M)$
est simplement le module engendré par ces $f_i$.
Ceci nous indique que $\Sat(M)$ est un \Vmo libre ayant pour \sul un autre 
\Vmo libre.

\smallskip
 En fait nous préférons procéder de manière moins brutale et
obtenir une base de  $\Sat(M)$ comme les vecteurs colonnes d'une matrice
 $\k$-échelonnée  $G$ que nous calculons à partir de $F$ au moyen d'opérations très simples.

Une première opération, de réduction d'un vecteur, que nous noterons 
$\RedPrim$ consiste à remplacer un vecteur  $a$, supposé non nul, par
$a/u$, où $u$ est un pgcd de ses \coes, par exemple $u$ est le \coe d'indice minimum parmi ceux qui divisent tous les autres, auquel cas le \coe pivot du vecteur réduit est égal à $1$.

Le traitement que nous faisons subir à la matrice $F$ 
pour la ramener à une forme $\k$-échelonnée stricte $G$
telle que $\Sat(\Im(F))= \Im(G)$  est la
suivante. Elle procède en traitant une après l'autre les colonnes de la matrice initiale.
Notez qu'au départ, la matrice vide est en forme $\k$-échelonnée stricte. 

Supposons qu'on ait traité quelques colonnes initiales de la matrice et qu'on
ait obtenu une matrice $\k$-échelonnée stricte avec pour colonnes 
$C^1,\dots,C^r$.

On veut traiter une nouvelle colonne, que l'on appelle $C=\sum_i c_i\e_i$.
On procède comme suit.
\begin{enumerate}
\item Pivot de Gauss:  on opère des manipulations \elrs
de colonnes classiques 
  $$
  C\leftarrow C- \frac{c_s}{c_{j,s}}\,C^j,
  $$ 
ici $s=\piv(C^j)$
et $c_{j,s}=\cq(C^j)$.
Cette opération est faite successivement avec les colonnes $C^1,\dots,C^r$.
On obtient alors une colonne $C'$. 
\item Si $C'=0$, on ne la rajoute pas. 
La matrice reste  en forme $\k$-échelonnée stricte.
Le \Kev engendré par les colonnes $C^1,\dots,C^r,C$ admet $(C^1,\dots,C^r)$ pour base.
\item Si $C'\neq 0$, on remplace $C'$ par sa forme réduite primitive
$C''=\RedPrim(C')$. Nous rajoutons
alors $C''$ comme dernière colonne $C^{r+1}$ de la matrice.
Et la nouvelle matrice est  en forme $\k$-échelonnée stricte.
Le \Kev engendré par les colonnes $C^1,\dots,C^r,C$ admet 
$(C^1,\dots,C^r,C'')$ pour base.   
\end{enumerate}

Par construction $\Im(G)$ est contenu dans $\Sat(\Im(F))$ et le \Vmo $\Im(G)$ est saturé parce que~$G$ est en  forme $\k$-échelonnée stricte.
En fait, aussi bien dans le cas 2. que dans le cas 3., on voit par \recu que l'on a bien construit une base du $\V$-saturé engendré par les première colonnes (jusqu'à la colonne $C$). \`A la fin du processus on a donc $\Sat(\Im(G))=\Sat(\Im(F))$.  
Notre \algo remplit bien le but fixé.

%:     Theorem{thAlgoSat}
\begin{ftheorem} \label{fthAlgoSat}~

\noindent L'\algo de saturation décrit ci-dessus calcule, à partir
d'une matrice $F$ à \coes dans $\V$ une matrice $G$  en  forme $\k$-échelonnée stricte telle que $\Im(G)=\Sat(\Im(F))$.
\\
Cet \algo est \gui{incrémental} au sens suivant. Si l'on traite une matrice
$[\,F_1\mid F_2\,]$, on obtient une matrice $[\,G_1\mid G_2\,]$ où $G_1$
est la matrice obtenue en traitant la matrice $F_1$.
\end{ftheorem}
%--------- fin theorem ---------------------------------------------- 

\medskip 
Le lemme suivant nous sera utile dans la prochaine section.

%:     Lemma{lemPivG}
\begin{flemma} \label{flemPivG}
Dans la procédure ``Pivot de Gauss'' décrite ci-dessus, si $C$ est primitif et si l'indice~$\piv(C)$ est distinct des  $\piv(C^j)$
pour $j=1,\dots,r$, alors le vecteur $C'$ obtenu est primitif avec $\piv(C)=\piv(C')$.  
\end{flemma}
%--------- fin lemma ---------------------------------------------- 
%
\begin{proof}{}
On considère l'affectation $C\leftarrow C- \frac{c_s}{c_{j,s}} C^j$ où $s=\piv(C^j)$
et $c_s$ est le \coe de $C$ sur la ligne $s$.
Posons $\ell=\piv(C)$. Le \coe $c_\ell$ sur la ligne $\ell$ de $C$ est  remplacé par $c_\ell - \frac{c_s}{c_{j,s}}\cdot c_{j,\ell}$, où $c_{j,\ell}$ est le \coe  sur la ligne $\ell $ de $C^j$.
Si $\ell >s$, $c_s$
est \rdt nul, si $\ell <s$ c'est $c_{j,\ell}$ qui est \rdt nul, dans les deux cas le \coe $c_\ell$ reste \rdt
inchangé.  
\end{proof}
%

%%%%%%%%%%%%%%%%%%%%%%%%%%%%%%%%%%%%%%%%%%%%%%%%%%%%%%%%
\section{La $\V$-saturation d'un $\VX$-\mtf} \label{fsatVXmotf}

Le travail que nous faisons dans cette section est un peu plus délicat
et semble, étrangement, tout à fait nouveau. Il réalise 
en Calcul Formel un résultat
théorique simple apparemment nouveau, et qui à fortiori n'avait jusqu'à maintenant aucune preuve constructive.

%:     Theorem{thSat}
\begin{ftheorem} \label{fthSat} On se situe toujours dans le contexte \ref{fcontext1}.
Si $M$ est un sous-$\VX$-\mtf de~$\VX^n$ alors le $\V$-saturé de 
$M$ dans  $\VX^n$ est \egmt un $\VX$-\mtf.  
\end{ftheorem}
%--------- fin theorem ---------------------------------------------- 

\noindent {\it Remarque.} Notons qu'en mathématiques classiques, tout \ddv satisfait les hypothèses du contexte \ref{fcontext1}, par application du principe du tiers exclu.
Notre preuve constructive du \tho~\ref{fthSat} fournit donc aussi
une preuve en mathématiques classiques sous la seule hypothèse que $\V$
est un \ddv. La m\^eme remarque s'applique pour tous les résultats de cet article.

\medskip La \dem du \tho résulte de la correction 
de l'\algo qui calcule un \sgr fini du  $\V$-saturé.

Vu comme $\VX$-module on a la base naturelle de   $\VX^n$
notée $(\f_1,\dots,\f_n)$.
Nous nous intéressons alors à une base naturelle de  $\VX^n$ comme \Vmo, qui est formée par les $\e_{i,k}=X^k\,\f_i$ avec l'ensemble d'indice $I=\intervalle{1..n}\times \NN$.
Nous munissons $I$ de l'ordre lexicographique pour lequel  
 $$
X^h\, \f_i < X^k\,\f_j \,\hbox{ si }\, i<j \;\hbox{ ou } \;i=j \hbox{ et } h<k.
%  (1,0)<(1,1)<\cdots(1,k)<\cdots<(2,0)<(2,1)
%  <\cdots<\cdots<(n,0)<(n,1)<\cdots<\cdots
 $$

Lorsque le module $\VX^{n}$ est vu comme un \Vmo avec la base naturelle des $X^k\,\f_j$, nous parlons  des \gui{\coos} sur cette base.
Lorsqu'il est vu comme un \VXmo  avec la base naturelle des~$\f_j$,
nous parlons  des \gui{\coes} sur cette base.

On dispose au départ d'une liste $S=[s^1,\dots,s^m]$ de vecteurs dans 
 $\VX^n$ qui forment un \sgr de $M$. 
 On suppose \spdg que $m\geq 1$ et que les $s^k$ sont non nuls.
On note 
$$ 
E=\V\,s^1+\cdots+\V\,s^m,\quad E_j=X^j\,E,\quad F'_k=\som_{j=0}^kE_j \quad\hbox{ et }\quad G'_k=\Sat_{\V,\VX^n}(F'_k).
$$

On peut décrire $F'_k$ et $G'_k$ comme les modules images de deux matrices
$F_k$ et $G_k$. La matrice $F_k$ est donnée, on la traite au moyen de l'\algo
de saturation de la section précédente, ce qui donne la matrice~$G_k$. 

La question qui se pose est de certifier qu'à partir d'un certain $k$,
rien ne sert de continuer, car les \elts rajoutés dans la base de $G'_k$ 
 laissent inchangé le $\VX$-module engendré (notez que le \Vmo $G'_k=\Im(G_k)$ grandit à chaque étape car $E\neq 0$).
 
\medskip Nous avons besoin de préciser quelques notations. 
Nous appelons \gui{degré de $E$} et nous notons~$d$ le plus grand degré d'une des coordonnées de l'un des $s^{k}$. 
De la m\^eme manière, $d+k$ sera le degré de~$E_k$ ou celui de~$F_k$. La matrice $F_k$ peut donc \^etre vue comme une matrice
avec $n(1+d+k)$ lignes et $m(1+k)$ colonnes.
 
Si $a$ est un vecteur $\V$-primitif de $\VX^n$, et si $\piv(a)=(j,r)\in I$ nous notons 
$$
\idx(a) := j \hbox{ et } \PrimMon(a) :=r.
$$
L'entier $\idx(a)$ est appelé l'\textsl{index de $a$},  l'entier $\PrimMon(a)$
son \textsl{premier exposant résiduel} et le couple  $\piv(a)$ est \textsl{l'indice pivot de $a$}. Tout couple $(j,r)\in I$ sert d'indice pour un vecteur  $X^{r}\,\f_{j}$ de la $\V$-base naturelle de $\VX^n$. 

Notons $H_k$ la matrice formée des colonnes que l'on rajoute à $G_{k-1}$
pour obtenir la matrice $G_k$.

%:     Fact{factHksuffit}
\begin{ffact} \label{ffactHksuffit}
Pour calculer la matrice $G_{k+1}$ à partir de la matrice $G_k$, au lieu de traiter les \gtrs de $E_{k+1}$ (i.e. la liste $X^{k+1}S$), on peut se contenter de traiter les
vecteurs colonnes de $XH_k$.  
\end{ffact}
%--------- fin fact ---------------------------------------------- 
%
\begin{proof}{} 
Considérons la procédure simplifiée décrite ci-dessus. \\
Notons $\ov{G_k}$ les matrices successives obtenues par cette procédure simplifiée.
\\
On vérifie sans difficulté par \recu sur $k$ que le \Kev engendré par les colonnes
de~$\ov{G_k}$ est le sous-espace $\K F'_k$ de $\KX^{m}$.
En effet, toute colonne réduite à $0$ est dans le \Kev engendré par les colonnes précédentes. Et toute colonne non réduite à $0$, engendre, modulo les colonnes précédentes, une fois réduite, le m\^eme \Kev 
que la colonne qui lui donne naissance.
\\
Les colonnes de $\ov{G_k}$ forment une base d'un
sous-\Vmo saturé de $\VX^{m}$, qui est donc égal à $\K F'_k\cap\VX^{m}$. Ceci montre que $\ov{G_k}=G_k$
\end{proof}

Dans la suite, nous faisons référence à la procédure simplifiée, mais nous notons $H_k$ et $G_k$ au lieu de $\ov{H_k}$ et $\ov{G_k}$.

Aux  matrices $H_k$ et $G_k$ nous associons plusieurs entiers: 
\begin{itemize}
\item L'entier $r_k$ est le nombre de colonnes de $G_k$, autrement dit le rang du \Vmo libre $\Im(G_k)$.
\item L'entier $n_k$, \textsl{nombre d'index pivots présents dans $H_k$}, est le cardinal de l'ensemble des $i\in\intervalle{1..n}$ tel qu'il existe une colonne $C$ de $H_k$ avec $\idx(C)=i$.
D'après le lemme \ref{flemEchStrict}, tous les index pivots présents
dans $H_{k-1}$ sont présents dans $H_k$, d'où l'on déduit par \recu
que  $n_k$ est aussi le nombre d'index pivots présents dans $G_k$. Donc la suite $n_k$ est une suite croissante.
\item L'entier $u_k$, \textsl{nombre de \coos disponibles pour $G_k$ au vu de $n_k$}, est égal à $n_k(1+d+k)$. Si $n_{k+1}=n_k$ 
on a $u_{k+1}=u_k+n_k$.
\item L'entier $\delta_k$, \textsl{défaut de $H_k$}, est le nombre de colonnes $C$
de $H_k$ telles qu'il existe une autre colonne~$C'$ de $H_k$ 
avec $\idx(C)=\idx(C')$
et $\PrimMon(C)<\PrimMon(C')$. Une telle colonne $C$ sera dite \textsl{surnuméraire}.  On a donc $r_k\leq u_k$ et $r_k=r_{k-1}+n_k+\delta_k$.
\item L'entier $\Delta_k=u_{k+1}-r_k$ est la \textsl{place disponible à occuper à l'étape $k+1$
si $n_k=n_{k+1}$}.\\ Si $n_k=n_{k+1}$, on a $\Delta_k=u_{k+1}-r_k=(u_k+n_k)-(r_{k-1}+n_k+\delta_k)=u_k-r_{k-1}-\delta_k=\Delta_{k-1}-\delta_k$. 
\end{itemize}

\medskip 
Pour visualiser le défaut $\delta_k$ de $H_k$ et la manière dont il évolue
lorsque $k$ augmente nous aurons recours, après la preuve, à 
des figures illustrant ce qui peut se passer.
Mais peut-\^etre la lecture de la preuve sera-t-elle facilité si on lit d'abord les commentaires qui accompagnent ces figures.

Le point essentiel est le suivant. 

D'après le lemme  \ref{flemPivG}, on est certain que lorsque l'on va traiter
les colonnes successives de $H_{k+1}=XH_k$ au moyen de $G_k$, 
tout pivot  $(j,r)$ d'une colonne de $H_k$ se retrouvera, décalé d'un cran,
i.e. en position $(j,r+1)$, comme pivot 
d'une colonne  de $H_{k+1}$, sauf dans le cas où il s'agit d'un indice
$(j,r+1)$  déjà présent dans $G_0$.
Dans ce dernier cas, ou bien la collision réduit à $0$ la colonne
de $XH_k$ (ce qui fait diminuer le défaut), ou bien un nouveau pivot est occupé par la colonne réduite
(et rendue primitive). Ce nouveau pivot peut avoir deux effets distincts.
Ou bien il se produit sur un index déjà occupé, et ne fait pas diminuer le défaut. Ou bien il se produit sur un index inoccupé, dans ce cas,  
le défaut diminue de 1 et le nombre $n_k$ augmente entre  $n_k$ et $n_{k+1}$. 
On a donc établi la première affirmation du lemme suivant.

%:     Lemma{lemFinDeLalgoCertaine}
\begin{flemma} \label{flemFinDeLalgoCertaine}
La suite $\delta_k$ est décroissante au sens large. Elle aboutit certainement à $0$ pour $k$ assez grand. 
\end{flemma}
%--------- fin lemma ---------------------------------------------- 
%
\begin{proof}{}
On a déjà remarqué que si $n_{k+1}=n_k$ et $\delta_k>0$ alors $\Delta_{k+1}<\Delta_k$. Pour un $k$ assez grand on obtient donc  $\delta_k=0$
ou $n_{k+1}>n_k$. Dans le second cas, on reproduit la situation précédente.
Comme la suite $n_k$ est bornée par $n$, cela ne peut se produire qu'un nombre fini de fois.
\end{proof}
%

%:     Lemma{lemFinDeLalgoCorrect}
\begin{flemma} \label{flemFinDeLalgoCorrect}
Si $\delta_k=0$ le $\VX$-module engendré par $G_k$ est le $\V$-saturé du
$\VX$-module engendré par les $s^j$ donnés au départ. On peut donc arr\^eter l'\algo. 
\end{flemma}
%--------- fin lemma ---------------------------------------------- 
%
\begin{proof}{}
Puisque la suite $\delta_k$ reste désormais nulle, il suffit de prouver que
les colonnes de~$H_{k+1}$ sont dans le $\VX$-module engendré par les
colonnes de $G_k$. Or vu le lemme \ref{flemPivG}, les colonnes de~$H_{k+1}$
sont dans le \Vmo $\Im(G_k)+X\Im(H_k)$.
\end{proof}

\medskip \noindent {\bf Un exemple avec des figures.}

\smallskip 
La figure 1 représente les indices pivots de $G_0=H_0$. 
Les $6$ cercles blancs  sont les \elts de $\piv(H_0)$. 

Les cercles ou carrés noirs pleins correspondent
à des  \elts de la $\V$-base où aucun indice pivot de $G_0$ n'est présent.
Les carrés noirs sont mis pour les index de pivots non présents: si toute une ligne est noire, on met des carrés pour insister.
\\
Dans le cas présent on a donc $n=5$, $d=4$, $n_0=4$, $r_0=6$, $u_0=20$,
$\Delta_0=14$, $\delta_0=2$.

On a entouré d'un grand cercle les deux \elts $\piv(H_0)$ des colonnes surnuméraires, qui correspondent à $\delta_0=2$.

%: figure  {fig1}
\begin{figure}[ht]   
\begin{center}
\includegraphics*[width=9cm]{indexnew-1}

\caption[figure 1]
{\label{ffig1} }

\end{center}
\end{figure}

La ligne brisée noire joint les
$\piv(C)$ pour les colonnes non surnuméraires de $H_0$. \\
D'après le lemme  \ref{flemPivG}, on est certain que lorsque l'on va traiter
les colonnes successives de $H_1=XH_0$ au moyen de $G_0$, 
tout pivot  $(j,r)$ d'une colonne de $H_0$ se retrouvera, décalé d'un cran,
i.e. en position $(j,r+1)$ comme pivot 
d'une colonne  de $H_1$, sauf dans le cas où il s'agit d'un indice
$(j,r+1)$  déjà présent dans $G_0$.

Dans le cas de la figure 1, tous les pivots de $H_0$ à l'exception du pivot $(4,1)$ se retrouvent décalés d'un cran dans $H_1$. En particulier le pivot $(5,1)$ apparaitra dans $H_1$,
et l'on voit que ce sera
pour une colonne surnuméraire (donc $\delta_1\geq 1$).
\\
Lorsqu'on va traiter la
colonne $XC$ telle que $\piv(C)=(4,1)$, une \textsl{collision} va se produire: 
un pivot de Gauss va \^etre effectué
pour réduire à $0$ le \coe en position $(4,2)$ de $XC$
et  la procédure de saturation va produire, ou bien le vecteur nul
(auquel cas $\delta_1=1$ et $H_1$ n'aura que $5$ colonnes, $r_1=11$), ou bien un vecteur $C''$ tel que $\piv(C'')$ remplisse une case
non occupée dans l'espace à priori disponible (vecteurs de degré $\leq 5$)
avec \ncrt  $\piv(C'')\notin \piv(G_0)$.
Dans ce cas on aura $r_1=12$. 

%\newpage

Nous allons maintenant examiner trois possibilités pour ce $\piv(C'')$, et nous donnons les 3 figures correspondantes pour $H_1$.
Les pivots de $H_0$ seront des cercles (vides) gris et ceux de $H_1$ des cercles (vides) noirs. 
Les carrés ou ronds noirs pleins correspondent 
de nouveauà des cases vides qui pourraient à priori
\^etre remplies à l'avenir.

Dans le cas de la figure 2 on a  $n_1=4$, $\delta_1=2$, $u_1=24$, 
$\Delta_1=12$.
Lorsque l'on traitera $XH_1$ au moyen de $G_1$,
des pivots en positions $(1,5)$, $(2,4)$, $(4,4)$ et $(5,4)$
seront produits dans $H_2$. Et deux collisions, respectivement en $(2,1)$ et $(5,2)$, donneront des résultats plus difficiles à prévoir.
On pourra avoir $r_2=16$ avec $\delta_2=0$, ou  $r_2=17$ (avec $\delta_2=1$ si $n_2=4$, ou $\delta_2=0$ si $n_2=5$),
ou encore  $r_2=18$.

%: figure  {fig2}
\begin{figure}[ht]   
\begin{center}
\includegraphics*[width=9cm]{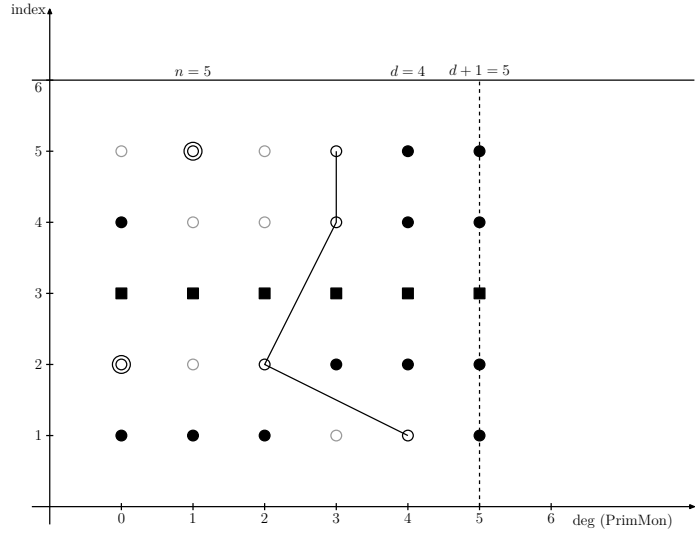}

\caption[figure 2]
{\label{ffig2} Si la collision en $(4,2)$ produit un pivot en position $(2,0)$ }

\end{center}
\end{figure}
	
%: figure  {fig3}
\begin{figure}[ht]   
\begin{center}
\includegraphics*[width=9cm]{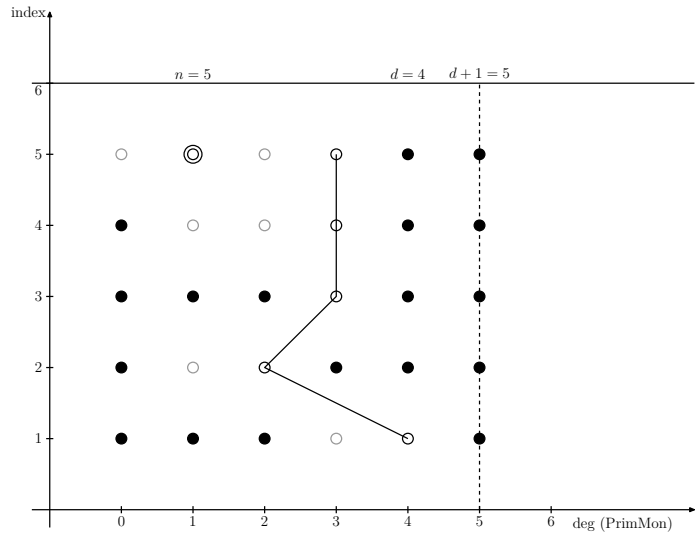}

\caption[figure 3]
{\label{ffig3} Si la collision en $(4,2)$ produit un pivot en $(3,3)$}

\end{center}
\end{figure}

%\vspace{-2mm}

Dans le cas de la figure 3  on a  $n_1=5$, $\delta_1=1$, $u_1=30$, $\Delta_1=18$.
Lorsque l'on traitera $XH_1$ au moyen de $G_1$,
des pivots en positions $(1,5)$, $(2,3)$, $(3,4)$, $(4,4)$ et $(5,4)$
seront produits dans $H_2$. Et une collision, en $(5,2)$, donnera un résultat plus difficile à prévoir.
Cela pourra réduire le vecteur à $0$, auquel cas $\delta_2=0$, ou produire un nouveau
vecteur, auquel cas $\delta_2=1$, car tous les index sont maintenant occupés par des pivots.  Le nouveau vecteur aura à priori pour pivot n'importe lequel des carrés noirs indiqués sur la figure, ou aussi
un pivot de degré $6$.

%: figure  {fig4}
\begin{figure}[ht]   
\begin{center}
\includegraphics*[width=9cm]{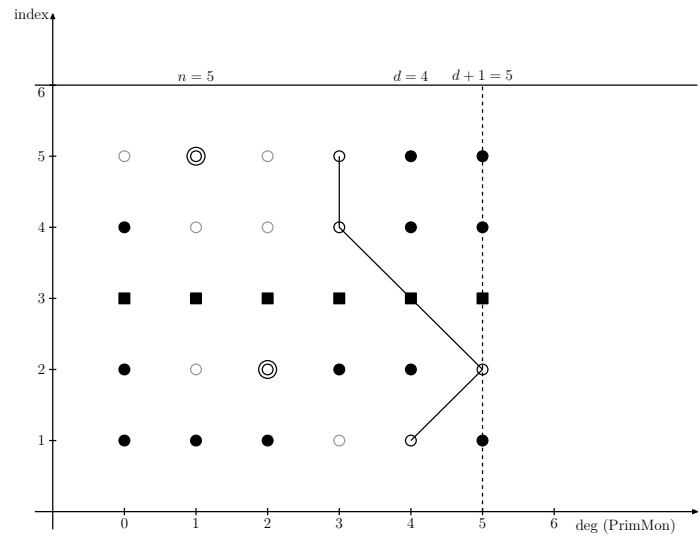}

\caption[figure 4]
{\label{ffig4} Si la collision en $(4,2)$ produit un pivot en $(2,5)$}

\end{center}
\end{figure}

%\vspace{-2mm}

Dans le cas de la figure 4 on a  $n_1=4$ et $\delta_1=2$.
Lorsque l'on traitera $XH_1$ au moyen de $G_1$,
des pivots en positions $(1,5)$, $(2,4)$, $(4,4)$ et $(5,4)$
seront produits dans $H_2$. Et une collision, en $(5,2)$, donnera un résultat plus difficile à prévoir.
La colonne surnuméraire de pivot $(2,2)$ ne produira à priori pas de collision,
sauf dans le cas où la collision certaine, précédemment évoquée, est traitée avant 
et  donne un vecteur réduit de pivot $(2,3)$.

%\hum { Ce dernier morceau de phrase: sauf dans le cas ...
%montre qu'il peut se passer des choses un peu subtiles, qui n'ont pas vraiment été annoncées au départ lorsqu'on a dit que tous les pivots de $XH_0$ non présents dans $G_0$ se retrouvaient à coup s\^ur dans $H_1$.}
%

%%%%%%%%%%%%%%%%%%%%%%%%%%%%%%%%%%%%%%%%%%%%%%%%%%%%%%%%
\section{Module des syzygies pour un $\VX$-\mtf} \label{fVXsysygies}

%:     Theorem{thsyzygies}
\begin{ftheorem} \label{fthsyzygies} 
On se situe toujours dans le contexte \ref{fcontext1}.
Soit $u_1,\dots,u_n\in\VX^k$ et $s^1,\dots,s^m\in\KX^n$ des \gtrs du
module des syzygies pour $(u_1,\dots,u_n)$ sur $\KX$. On peut supposer que
les $s^j$ sont dans $\VX^n$. Alors le  module des syzygies pour $(u_1,\dots,u_n)$ sur $\VX$ est égal au $\V$-saturé  dans $\VX^n$ du $\VX$-module engendré par 
$s^1,\dots,s^m$. En conséquence vu le \tho \ref{fthSat}
ce module est \tf. 

\noindent En particulier $\VX$ est un anneau \coh.
\end{ftheorem}
%--------- fin theorem ---------------------------------------------- 

%
\begin{proof}
Un vecteur $f=(f_1,\dots,f_n)\in\VX^n$ tel que $\sum_jf_ju_j=0$
s'écrit sous forme 
  $$
  f=a_1s^1+\cdots+a_ms^m
  $$ 
avec des $a_j\in\KX$. 
En multipliant cette relation par un $\alpha\neq 0$ convenable dans $\V$, 
on obtient les~$\alpha\, a_i\in\VX$. Ceci montre que $\alpha\,f$ est dans le $\VX$-module engendré par les $s^j$. Et donc $f$ est dans le
$\V$-saturé du $\VX$-module engendré par les $s^j$.
La réciproque est évidente.
\end{proof}

Ceci conduit à l'\algo suivant pour calculer un \sgr fini du module des syzygies pour $u_1,\dots,u_n\in\VX^k$ sur $\VX$:
\begin{enumerate}
\item Calculer des vecteurs  $v^1,\dots,v^m\in\KX^n$ qui forment un \sgr fini du  module des syzygies pour $(u_1,\dots,u_n)$ sur $\KX$.
\item En multipliant chaque $v^j$ par un $\alpha_j\in\K$ convenable, le remplacer par un $s^j\in\VX^n$ primitif.
\item Calculer au moyen de l'\algo de la section \ref{fsatVXmotf} un \sgr fini
du $\V$-saturé du $\VX$-module  $\gen{s^1,\dots,s^m}$ dans $\VX^n$.
\end{enumerate}

%%%%%%%%%%%%%%%%%%%%%%%%%%%%%%%%%%%%%%%%%%%%%%%%%%%%%%%%%%%%%%%%%%%%%%%%%%% 

%%%%%%%%%%%%%%%%%%%%%%%%%%%%%%%%%%%%%%%%%%%%%%%%%%%%%%%%%%%%%%%%%%%%%%%%%%%
\section{Annexe: des codes Magma} \label{fcode}

%\hum { La structure de donnée d'une liste échelonnée
%devrait comprendre explicitement l'indice pivot de chaque vecteur,
%cela n'est pas clair actuellement.
%
%Aussi: je trouve que la procédure Defaut devrait \^etre montrée.
%
%Bizarre: la procédure générale n'a pas de nom dans le fichier actuel (je me trompe?)
%
%Enfin: je subodore que si le \ddv n'est pas supposé \dcd, on peut tout aussi bien écrire la procédure (du moment que les indices pivots retenus sont explicités). Et que l'\algo doit converger exactement comme dans le cas \dcd, puisqu'on calcule à chaque étape une base du saturé du m\^eme module. 
%
%Henri}

Nous présentons dans cette annexe des codes magma pour calculer
le $\V$-saturé d'un sous-\VXmo de type fini d'un module $\VX^{n}$
 (si $\V$ est un \ddv \dcd) en suivant la méthode décrite dans
 l'article.
 
Les commentaires sont ou bien sur une ligne, précédés de {\tt //},
ou bien sur plusieurs lignes, entre les signes {\tt /*} et {\tt */}.
{\small \verbatiminput {EchelonStrict.magma}
}

\addcontentsline{toc}{section}{Références}

\markboth{Références}{Références}

\small
~
\bibliographystyle{plainnat-fr}

\normalsize

\end{document}